\title{Generic countably infinite groups}
\author{M\'arton Elekes, Bogl\'arka Geh\'er, Krist\'of Kanalas, Tam\'as K\'atay, Tam\'as Keleti}
\begin{document}

\maketitle

\begin{abstract}
Countably infinite groups (with a fixed underlying set) constitute a Polish space $\calg$ with a suitable metric, hence the Baire category theorem holds in $\calg$. We study isomorphism invariant subsets of $\calg$, which we call group properties. We say that the \textbf{generic} countably infinite group is of property $\calp$ if $\calp$ is comeager in $\calg$.

We prove that every group property with the Baire property is either meager or comeager. We show that there is a comeager elementary equivalence class in $\calg$ but every isomorphism class is meager. We prove that the generic group is algebraically closed, simple, not finitely generated and not locally finite. We show that in the subspace of Abelian groups the generic group is isomorphic to the unique countable, divisible torsion group that contains every finite Abelian group.

We sketch the model-theoretic setting in which many of our results can be generalized. We briefly discuss a connection with infinite games.
\end{abstract}

\begin{spacing}{1}
\tableofcontents
\end{spacing}

\newpage

\section{Introduction}

\textbf{Motivation.} Generic properties in the sense of Baire category have been intensively studied in almost all branches of mathematics for a century. For example, there is a vast literature on the behaviour of the generic continuous function.

More recently, E.~Akin, M.~Hurley and J.~A.~Kennedy \cite{AKIN} investigated the dynamics of generic homeomorphisms of compact spaces. A.~Kechris and C.~Rosendal \cite{KECHRIS2} studied the automorphism groups of homogeneous countable structures. Among numerous other results they characterized when an automorphism group admits a comeager conjugacy class. Even more recently, M.~Doucha and M.~Malicki \cite{DOUCHA} examined generic representations of discrete countable groups in Polish groups.

From the model-theoretic point of view P.~J.~Cameron \cite{CAMERON} studied the genericity of relational structures. M.~Pouzet and B.~Roux \cite{POUZET} extended the study to a general setting and obtained results on metric spaces and transition systems. Z.~Kabluchko and K.~Tent \cite{TENT1}, \cite{TENT2} studied the genericity of Fra\"issé limits. A.~Kruckman \cite{KRUCKMAN} investigated infinitary limits of classes of finite structures. Among several other results he presented theorems on the generic limits \cite[Chapter 2]{KRUCKMAN}.



Genericity is often studied via infinite games. For example, W.~Kubiś \cite{KUBIS2} introduced a variant of the Banach-Mazur game played in partially ordered sets. In a recent paper \cite{KUBIS1} A.~Krawczyk and W.~Kubiś studied another variant played with finitely generated structures.

\textbf{Main goal.} The main goal of this paper is to initiate the study of generic properties of countably infinite groups in the sense of Baire category.

\textbf{Setup.} We can fix a common underlying set, say $\nat$, for all countably infinite groups. We define a natural topology on the set of multipilication tables of countably infinite groups (see Section~\ref{s.the_space}). With this topology it is a Polish space $\calg$. In particular, the Baire category theorem holds in $\calg$. We reserve the term \emph{group property} for isomorphism-invariant subsets of $\calg$. Now it makes sense to study generic group properties in $\calg$.

\textbf{The main results and the organization of the paper.} Section~\ref{s.prelim} provides essential preliminaires in algebra, logic and descriptive set theory. In Section~\ref{s.the_space} we introduce the space in which our work takes place. In Section~\ref{s.dense} we give a simple sufficient condition for a group property to be dense in $\calg$. As an easy direct application we show that simpleness is generic.

In Section~\ref{s.0-1} we present a 0-1 law for group properties. Namely, we prove that \emph{every group property that has the Baire property} (BP) as a subset of $\calg$ \emph{is either meager or comeager}. As the BP is a very weak condition, less formally this theorem says that every sensible group property is either meager or comeager. We show that isomorphism classes and group properties defined by first-order formulas in the language of group theory have the BP (in fact, they are Borel). Therefore every isomorphism class is either meager or comeager, and \emph{there is a comeager elementary equivalence class} in $\calg$ (with respect to the language of group theory).

One may suspect that the reason behind results of Section~\ref{s.0-1} is that there is a comeager isomorphism class in $\calg$. To settle this problem we need algebraically closed groups, which are the group theoretic analogues of algebraically closed fields. In Section~\ref{s.alg_closed} we show that \emph{algebraic closedness is a generic property}.

In Section~\ref{s.embed_isom} we prove that a group $G$ is generically embeddable (that is, it can be embedded into a comeager set of groups) if and only if $G$ can be embedded into every algebraically closed group. By the results of B.~H.~Neumann \cite{NEUMANN3}, H~Simmons \cite{SIMMONS} and A.~Macintyre \cite{MACINTYRE2} this is also equivalent to the following property: every finitely generated subgroup of $G$ has solvable word problem. This result reveals a connection between our topology and the important and well-studied word problem. Moreover, the characterization of embeddability allows us to prove that \emph{every isomorphism class is meager} in $\calg$.

In Section~\ref{s.abelian_groups} we study the subspace of Abelian groups and we show that there is a comeager isomorphism class in this subspace. Thus one may say, informally, that \emph{there is a generic countably infinite Abelian group}. In Section~\ref{s.games} we connect our results with the theory of infinite games. In Section~\ref{s.problems} we list four problems representing four directions in which the study may be continued.

\textbf{Generalizations.} Although many of our results have straightforward generalizations in model theory, we formulate them as theorems about groups to help readers not familiar with model theory. The general setting is presented at the end of Section~\ref{s.the_space}. Throughout the paper we make remarks about model-theoretic generalizations. It will take another paper to accurately generalize the proofs and study the general problem of isomorphism classes.

\section{Preliminaries}\label{s.prelim}

We beleive this paper could be interesting for researchers of several fields of mathematics. Therefore, we include essential, sometimes basic preliminaries in algebra, logic and descriptive set theory. The reader may skip the familiar parts.

\subsection{Algebra}\label{ss.algebra}

We will need the following notion from combinatorial group theory:

\begin{defi}
A finitely generated group $G$ has \textbf{solvable word problem} if there exists a Turing machine that decides for every word in the generators of $G$ whether it represents the identity element. It is easy to see that the solvability of the word problem is independent of the choice of the finite generating set.
\end{defi}

Let $F$ be a free group on the infinite generating set $X=\{x_1,x_2,\dots\}$ and $G$ be any group. Recall (see \cite[Chapter~3]{MASSEY}) that an element of the free product $F\star G$ is a word whose letters are from $X$ and $G$. Intuitively, in a word $w\in F\star G$ letters from $X$ are variables and letters from $G$ are parameters.

Let $E$ and $I$ be finite subsets of $F\star G$. We view $E$ as a set of \textbf{equations} and $I$ as a set of \textbf{inequations}. A \textbf{solution} of the system $(E,I)$ in $G$ is a homomorphism $f: F\to G$ such that the unique homomorphism $\wtilde f: F\star G\to G$ extending both $f$ and the identity of $G$ maps every $e\in E$ to $1_G$ and does not map any $i\in I$ to $1_G$.

The system $(E,I)$ is \textbf{consistent with $G$} if it has a solution in a bigger group $H\geq G$. That is, if there exists a group $H$
and an embedding $h:G\to H$ such that for the unique 
homomorphism $\wtilde h:F\star G\to F\star H$ extending both $h$ and the identity of $F$, the system $\left(\wtilde h (E),\wtilde h(I)\right)$ has a solution in $H$.

\begin{defi}\label{d.alg_closed}
The group $G$ is \textbf{algebraically closed} if every finite system $(E,I)$ of equations and inequations that is consistent with $G$ has a solution in $G$. 
\end{defi}

\begin{remark}
Some authors prefer the term existentially closed that comes from model theory and reserve the term algebraically closed for the case when one does not allow inequations in the definition. However, B.~H.~Neumann proved in \cite{NEUMANN1} that the two notions coincide except for the trivial group. Since we study only infinite groups, the terminology will cause no confusion.
\end{remark}

By a standard closure argument one easily verifies the following.

\begin{theorem}[Scott, \cite{SCOTT}]\label{t.emb_alg_closed}
Every countable group can be embedded into a countable algebraically closed group. In particular, algebraically closed groups exist.
\end{theorem}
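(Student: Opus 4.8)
The plan is to realize $H$ as the union of a countable increasing chain of countable groups $G = G_0 \le G_1 \le G_2 \le \cdots$, where each $G_{n+1}$ is built so as to solve \emph{every} finite system of equations and inequations that is consistent with $G_n$. Two ingredients drive the construction. The first is a \textbf{countability lemma}: if a finite system $(E,I)$ over a countable group $K$ is consistent with $K$, then it already has a solution in some \emph{countable} group $K^+ \ge K$.

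For the lemma I would argue as follows. Consistency gives, by definition, a (possibly uncountable) group $L \ge K$ and a solution $f\colon F \to L$. Since $E$ and $I$ are finite subsets of $F \star K$, only finitely many variables $x_{i_1}, \dots, x_{i_k} \in X$ occur in them, so I may take $f$ to send every other free generator to the identity. Let $K^+ \le L$ be the subgroup generated by $K$ together with $f(x_{i_1}), \dots, f(x_{i_k})$. It is generated by a countable set, hence countable, satisfies $K \le K^+ \le L$, and $f$ corestricts to a solution of $(E,I)$ in $K^+$.

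The second ingredient is the amalgamated free product, used to realize all consistent systems at one stage simultaneously while staying countable. Given a countable $G_n$, the free product $F \star G_n$ is countable, so there are only countably many finite systems with parameters in $G_n$; let $(s_j)_{j}$ enumerate those consistent with $G_n$, and for each $j$ use the lemma to choose a countable group $L_j \ge G_n$ solving $s_j$. Since each factor of an amalgamated free product embeds into it, the amalgam $G_{n+1}$ of the family $(L_j)_j$ over their common subgroup $G_n$ is a countable group containing every $L_j$ over $G_n$; hence every system consistent with $G_n$ has a solution in $G_{n+1} \ge G_n$. Setting $H := \bigcup_n G_n$ then yields a countable group containing $G$.

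It remains to verify that $H$ is algebraically closed, and here the only point to check is that consistency descends along the chain. Let $(E,I)$ be a finite system consistent with $H$; as $E$ and $I$ are finite, all their parameters lie in some $G_n$. Any group $L \ge H$ witnessing consistency with $H$ satisfies $L \ge G_n$ as well, so $(E,I)$ is consistent with $G_n$ and therefore has a solution in $G_{n+1} \le H$; composing with the inclusion $G_{n+1} \hookrightarrow H$ gives a solution in $H$, the inequations surviving because the inclusion is injective. Thus every system consistent with $H$ is solvable in $H$. I expect no deep obstacle here: the content is entirely in the bookkeeping that keeps each $G_n$ countable while eventually realizing all consistent systems, for which the embedding property of the amalgamated free product is exactly the tool that lets the finitely many parameters of each system be shared across all the witnessing extensions.
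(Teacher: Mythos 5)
Your argument is correct and is precisely the ``standard closure argument'' that the paper invokes without proof (the theorem is stated there with only a citation to Scott): reduce to countable witnessing extensions, combine the countably many witnesses over $G_n$ into one countable extension $G_{n+1}$ via the amalgamated free product, iterate $\omega$ times, and observe that consistency of a finite system descends from the union $H$ to the stage $G_n$ containing its parameters. The one point that genuinely needed checking --- that a system consistent with $H$ whose parameters lie in $G_n$ is already consistent with $G_n$ and hence solvable in $G_{n+1}\leq H$ --- is exactly the one you verify.
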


\begin{defi}\label{d.homogeneous}
A countably infinite group $G$ is \textbf{homogeneous} if any isomorphism between two finitely generated subgroups of $G$ extends to an automorphism of $G$. If this extension can always be chosen to be an inner automorphism, then $G$ is \textbf{strongly homogeneous}.
\end{defi}

Homogeneous groups have the following property that can be proved by a straightforward back-and-forth argument, see \cite[Lemma 7.1.4]{HODGES}.

\begin{prop}
If $G$ and $H$ are homogeneous countably infinite groups and they have the same finitely generated subgroups (up to isomorphism), then $G\cong H$.
\end{prop}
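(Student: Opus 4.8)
The plan is to run a back-and-forth argument, building the desired isomorphism $G \cong H$ as the union of an increasing chain of isomorphisms between finitely generated subgroups. First I would fix enumerations $G = \{g_0, g_1, \dots\}$ and $H = \{h_0, h_1, \dots\}$ and construct a chain of isomorphisms $\phi_n \colon A_n \to B_n$, where $A_n \leq G$ and $B_n \leq H$ are finitely generated, $A_n \subseteq A_{n+1}$, $B_n \subseteq B_{n+1}$, and $\phi_{n+1}$ extends $\phi_n$. At even stages I ensure that $g_n$ lies in the domain and at odd stages that $h_n$ lies in the range, so that $\bigcup_n \phi_n$ becomes a bijection $G \to H$; being an increasing union of isomorphisms, it is itself an isomorphism.

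The heart of the matter is the single extension step (the ``forth'' direction): given an isomorphism $\phi \colon A \to B$ between finitely generated subgroups and an element $g \in G$, I must extend $\phi$ to an isomorphism defined on $A' := \langle A, g\rangle$. Here both hypotheses enter. Since $A'$ is finitely generated, the hypothesis that $G$ and $H$ have the same finitely generated subgroups (up to isomorphism) yields an isomorphism $\psi \colon A' \to C$ onto some finitely generated $C \leq H$. Then $\phi \circ (\psi|_A)^{-1}$ is an isomorphism between the finitely generated subgroups $\psi(A)$ and $B$ of $H$, so by homogeneity of $H$ it extends to an automorphism $\alpha$ of $H$. I would then check that $\alpha \circ \psi \colon A' \to \alpha(C)$ is an isomorphism extending $\phi$: indeed, for $a \in A$ we have $\alpha(\psi(a)) = \phi(a)$ by the defining property of $\alpha$, and $\alpha(C) \supseteq \alpha(\psi(A)) = \phi(A) = B$, so the extension is compatible with the subgroups already built. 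Setting $\phi_{n+1} = \alpha \circ \psi$ and $B_{n+1} = \alpha(C)$ completes the step. The ``back'' direction is entirely symmetric, using homogeneity of $G$ in place of that of $H$.

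I do not expect a serious obstacle: the argument is standard once the extension step is isolated. The only points requiring care are the bookkeeping to confirm that the union is genuinely onto all of $G$ and all of $H$ (handled by alternating forth and back steps) and the short verification that $\alpha \circ \psi$ really restricts to $\phi$ on $A$. It is worth stressing that full homogeneity, rather than the weaker statement that finitely generated subgroups merely embed, is exactly what converts the abstract isomorphism $\psi$ into one compatible with the previously constructed $\phi$; this is the sole place where the homogeneity hypothesis is used.
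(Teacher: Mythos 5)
Your proof is correct and is exactly the standard back-and-forth argument that the paper itself invokes (it gives no proof, only a citation to \cite[Lemma 7.1.4]{HODGES}). The extension step is handled properly: homogeneity of the target is used precisely to replace the abstract isomorphism $\psi$ by one compatible with the partial isomorphism already built.
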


The following proposition is also known, see \cite[Lemma 1]{MACINTYRE1}. However, we present the fairly short proof because we will refer to it.

\begin{prop}\label{p.alg_closed_strongly_hom}
Countably infinite algebraically closed groups are strongly homogeneous.
\end{prop}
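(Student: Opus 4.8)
The plan is to encode the desired inner automorphism as the solution of a single-variable system of equations and then invoke algebraic closedness, with consistency guaranteed by an HNN extension.

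First I would fix finitely generated subgroups $A=\langle a_1,\dots,a_n\rangle$ and $B$ of $G$ together with an isomorphism $\varphi\colon A\to B$. Conjugating $A$ onto $B$ in a way that agrees with $\varphi$ amounts to finding $g\in G$ with $g^{-1}a_ig=\varphi(a_i)$ for every $i$. Accordingly, working in $F\star G$ and using a single variable $x\in X$, I would set up the finite system of equations
$$E=\left\{\,x^{-1}a_ix\,\varphi(a_i)^{-1}\;:\;i=1,\dots,n\,\right\},\qquad I=\emptyset.$$
A solution $g$ of $(E,I)$ in $G$ satisfies $g^{-1}a_ig=\varphi(a_i)$ on the generators; since both $a\mapsto g^{-1}ag$ and $\varphi$ are homomorphisms, they then agree on all of $A$, so conjugation by $g$ is an inner automorphism of $G$ whose restriction to $A$ is exactly $\varphi$.

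The key step is to verify that $(E,I)$ is consistent with $G$ in the sense of Definition~\ref{d.alg_closed}. Here I would use the HNN extension: because $\varphi\colon A\to B$ is an isomorphism between subgroups of $G$, the group $H=\langle G,t\mid t^{-1}at=\varphi(a)\ (a\in A)\rangle$ contains a copy of $G$ (this is the content of the Higman--Neumann--Neumann embedding theorem), and by construction the element $t$ solves the image of $E$ under the canonical embedding of $G$ into $H$. Thus $(E,I)$ has a solution in the overgroup $H\geq G$, so it is consistent with $G$. Since $G$ is algebraically closed, $(E,I)$ already has a solution $g\in G$, completing the argument.

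The main obstacle is precisely establishing consistency, and the decisive tool is the HNN extension, which simultaneously guarantees that the base group $G$ embeds into $H$ and that a conjugating element realizing $\varphi$ exists in $H$. Once consistency is in hand, algebraic closedness does the rest, and checking that the generator-level equations propagate to an honest inner automorphism extending $\varphi$ is routine.
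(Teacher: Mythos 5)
Your proposal is correct and follows essentially the same route as the paper: encode the conjugation condition on a finite generating set as a one-variable system of equations, establish consistency via the Higman--Neumann--Neumann extension theorem, and invoke algebraic closedness to obtain a conjugator in $G$ itself. The only cosmetic difference is notation (your $A$, $B$, $\varphi$ versus the paper's $H$, $K$, $\alpha$), and your explicit remark that agreement on generators propagates to all of $A$ is exactly the final step of the paper's argument.
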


The proof is based on the existence of HNN extensions:

\begin{theorem}[Higman--Neumann--Neumann, \cite{HNN}]\label{t.HNN}
If $G$ is a group and $\alpha: H\to K$ is an isomorphism between two subgroups of $G$, then there is a group $G_\alpha\geq G$ with only one new generator $t$ such that the conjugation by $t$ is an automorphism of $G_\alpha$ extending $\alpha$.
\end{theorem}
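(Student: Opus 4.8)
The plan is to construct $G_\alpha$ explicitly as a presented group and then reduce the whole statement to a single embedding assertion. Concretely, I would set
$$G_\alpha = \left(G \star \langle t \rangle\right) / N,$$
where $\langle t\rangle$ is infinite cyclic and $N$ is the normal closure of the set $\{\,t\,h\,t^{-1}\,\alpha(h)^{-1} : h \in H\,\}$. By construction the relation $t\,h\,t^{-1} = \alpha(h)$ holds in $G_\alpha$ for every $h \in H$, so the inner automorphism $c_t\colon x \mapsto t\,x\,t^{-1}$ of $G_\alpha$ satisfies $c_t(h) = \alpha(h)$; moreover $G_\alpha$ is generated by $G$ together with the single new generator $t$. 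Thus the entire content of the theorem collapses to showing that the canonical homomorphism $G \to G_\alpha$ is \emph{injective}: once $G$ is identified with its image we have $G \leq G_\alpha$, conjugation by $t$ is an automorphism of $G_\alpha$ (being inner), and it restricts to $\alpha$ on $H$, exactly as required.

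To prove injectivity I would establish a normal form for the elements of $G_\alpha$. Fix a left transversal $S_H$ of $H$ in $G$ and a left transversal $S_K$ of $K$ in $G$, each containing $1$. Using the relations $t\,h\,t^{-1} = \alpha(h)$ and $t^{-1}\,k\,t = \alpha^{-1}(k)$ repeatedly, every element of $G_\alpha$ can be rewritten as a word
$$w = g_0\, t^{\varepsilon_1} g_1\, t^{\varepsilon_2} \cdots t^{\varepsilon_n} g_n, \qquad \varepsilon_i \in \{+1,-1\},$$
that is \emph{reduced}, meaning it contains no subword of the form $t\,g\,t^{-1}$ with $g \in H$, nor $t^{-1}\,g\,t$ with $g \in K$, and whose intermediate factors are drawn from the appropriate transversals. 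The existence of such a reduced form is a routine rewriting argument; the crux is \emph{uniqueness} — that two distinct reduced words represent distinct elements of $G_\alpha$. Granting this, no nontrivial $g \in G$ can collapse to $1$ (this is the content of Britton's Lemma), giving the embedding.

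I expect the main obstacle to be precisely the proof that distinct reduced words are distinct as group elements, since the defining relations could in principle allow hidden coincidences. The cleanest route is van der Waerden's permutation trick: let $X$ denote the set of reduced normal forms and define an action of the generators on $X$ — each $g \in G$ acting by left multiplication on the leading factor, and $t$, $t^{-1}$ acting by prepending $t^{\pm 1}$ and renormalizing. One then verifies that these assignments are bijections of $X$ satisfying all the defining relations of $G_\alpha$, yielding a homomorphism $\Phi\colon G_\alpha \to \mathrm{Sym}(X)$. Since $\Phi(w)$ sends the empty form $(1)$ to the form $w$, distinct reduced words act differently, which establishes uniqueness; and since $\Phi(g)$ sends $(1)$ to $(g)$, the composite $G \to G_\alpha \to \mathrm{Sym}(X)$ is injective, so $G \to G_\alpha$ is injective.

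The delicate point inside this verification — and the step I would spend the most care on — is the well-definedness of the actions of $t$ and $t^{-1}$ at the boundary, where prepending $t^{\pm1}$ to a reduced word either creates or cancels a pinch according to whether the leading factor lies in $H$, respectively $K$. Handling these cases requires splitting each leading element as $g_0 = s\,h$ with $s \in S_H$, $h \in H$ (respectively via $S_K$, $K$) and using $\alpha$ to transport the $H$-part across $t$. Once this case analysis is checked to respect associativity and the relations, the proof is complete; everything else is bookkeeping.
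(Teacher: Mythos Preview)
Your proposal is a correct and standard proof of the HNN extension theorem: the construction of $G_\alpha$ as a quotient of the free product $G\star\langle t\rangle$, the reduction to injectivity of $G\to G_\alpha$, and the normal-form argument via the van der Waerden permutation trick are exactly the classical route (this is essentially Britton's Lemma together with its usual proof).

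However, there is nothing to compare against: the paper does not prove this theorem. It is stated with a citation to the original Higman--Neumann--Neumann paper and then \emph{used} as a black box in the proof of Proposition~\ref{p.alg_closed_strongly_hom}. So your write-up supplies strictly more than the paper does. If your goal was to match the paper, a one-line citation would have sufficed; if your goal was to make the exposition self-contained, your sketch is fine, with the caveat that the case analysis for the action of $t^{\pm1}$ on reduced forms---which you correctly flag as the delicate point---would need to be written out in full for a complete proof.
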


\begin{proof}[Proof of Proposition~\ref{p.alg_closed_strongly_hom}]
Let $G$ be a countably infinite algebraically closed group. Let $H$ and $K$ be finitely generated subgroups of $G$ and $\alpha:H\to K$ be an isomorphism. Let $\{h_1,\dots,h_n\}$ be a generating set for $H$. Consider the following system of equations:
\begin{equation}\label{e.conjugation}
    x^{-1}h_1x=\alpha(h_1),\dots,x^{-1}h_nx=\alpha(h_n).
\end{equation}
By the HNN extension construction (\ref{e.conjugation}) is consistent with $G$. (More accurately, the system $E=(x^{-1}h_1x(\alpha(h_1))^{-1},\ldots,x^{-1}h_nx(\alpha(h_n))^{-1})$ is consistent with $G$.) Then it has a solution $g$ in $G$ because $G$ is algebraically closed. The conjugation by $g$ coincides with $\alpha$ on the generating set $\{h_1,\dots,h_n\}$ of $H$; therefore the conjugation by $g$ is an inner automorphism of $G$ extending $\alpha$.
\end{proof}

Finally we cite a very nice result that connects the word problem and algebraically closed groups. It is due to B.~H.~Neumann, H.~Simmons and A.~Macintyre, see \cite{NEUMANN3}, \cite{SIMMONS} and \cite{MACINTYRE2}.

\begin{theorem}\label{t.alg_c_word_p}
A finitely generated group has solvable word problem if and only if it is embeddable into every algebraically closed group.
\end{theorem}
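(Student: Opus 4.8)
The plan is to prove the two implications separately, since they are of quite different character. The forward implication (solvable word problem $\Rightarrow$ embeddable into every algebraically closed group) is essentially algebraic and rests on converting a \emph{single} inequation into full injectivity by exploiting simplicity. The reverse implication (embeddable into every algebraically closed group $\Rightarrow$ solvable word problem) is recursion-theoretic: I would isolate one especially well-behaved algebraically closed group and read off solvability of the word problem from it.

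For the forward direction, suppose the finitely generated group $G$ has solvable word problem; we may assume $G$ is nontrivial. I would invoke the Boone--Higman theorem, which states precisely that such a $G$ embeds into a simple group $S$ that is itself a subgroup of some finitely presented group $H=\langle y_1,\dots,y_k\mid s_1,\dots,s_m\rangle$. Fix a nontrivial $g\in G$ and a word $w_0(y_1,\dots,y_k)$ representing its image in $S\leq H$. Now let $A$ be an arbitrary algebraically closed group, and consider the finite system consisting of the equations $s_1=\dots=s_m=1$ together with the single inequation $w_0\neq 1$, in variables $x_1,\dots,x_k$. This system is consistent with $A$ in the sense of Definition~\ref{d.alg_closed}: inside the free product $A\star H$ the assignment $x_i\mapsto y_i$ satisfies all the relators, while $w_0$ maps to the nontrivial element $g$ of $H\leq A\star H$, and $A$ embeds into $A\star H$. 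Since $A$ is algebraically closed, the system has a solution $a_1,\dots,a_k\in A$. The relators guarantee that $y_i\mapsto a_i$ extends to a homomorphism $\varphi\colon H\to A$, and the inequation guarantees that $\varphi$ is nontrivial on $S$; since $S$ is simple, $\varphi|_S$ is injective, so the embedding $G\hookrightarrow S$ followed by $\varphi|_S$ is the desired embedding of $G$ into $A$.

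For the reverse direction I would argue by contraposition, and aim to construct a single algebraically closed group $A_*$ \emph{all of whose finitely generated subgroups have solvable word problem}. Granting such an $A_*$, the conclusion is immediate: if $G$ embeds into every algebraically closed group then it embeds into $A_*$, and being a finitely generated subgroup of $A_*$ it inherits a solvable word problem. To build $A_*$ I would mimic the effective closure behind Scott's Theorem~\ref{t.emb_alg_closed}: start from a finitely presented group with solvable word problem, and construct $A_*$ as an increasing union $A_*=\bigcup_n A_n$, where at stage $n$ one takes the next finite system that is consistent with $A_n$ (under a fixed recursive bookkeeping of all such systems) and realizes it by adjoining a solution through an HNN-type extension as in Theorem~\ref{t.HNN}. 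The limit is algebraically closed by construction; the point is to perform each step effectively and to choose the extensions so that solvability of the word problem is inherited by $A_{n+1}$ from $A_n$.

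The main obstacle is exactly this last requirement. A recursive closure, by itself, only yields a recursively presented group, and recursively presented groups may have \emph{unsolvable} word problem (as the finitely presented Novikov--Boone groups show); so recursiveness of the construction is far too weak. The delicate content of the theorem---which is where the work of Neumann, Simmons and Macintyre lies---is to arrange the amalgamation and HNN steps so uniformly that \emph{every} finitely generated subgroup occurring in the union retains a solvable word problem, and to verify that passing to the limit preserves this. I expect the careful design and analysis of these word-problem-preserving steps (rather than either the Boone--Higman input or the abstract closure) to be the crux of the argument.
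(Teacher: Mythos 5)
The paper does not prove this statement at all: it is quoted as a known theorem of B.~H.~Neumann, H.~Simmons and A.~Macintyre (\cite{NEUMANN3}, \cite{SIMMONS}, \cite{MACINTYRE2}), so there is no internal argument to compare against. Judged on its own terms, your forward direction is the standard (Neumann's) argument and is correct: Boone--Higman gives $G\leq S\leq H$ with $S$ simple and $H$ finitely presented; the relators of $H$ together with the single inequation $w_0\neq 1$ form a finite system consistent with any algebraically closed $A$ (witnessed by $x_i\mapsto y_i$ inside $A\star H$); and a solution in $A$ induces a homomorphism $H\to A$ that is nontrivial, hence injective, on the simple group $S$, embedding $G$ into $A$.

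The reverse direction, however, rests on an impossible plan. You propose to construct an algebraically closed group $A_*$ all of whose finitely generated subgroups have solvable word problem. No such group exists: \emph{every} algebraically closed group has a finitely generated subgroup with unsolvable word problem. This is precisely the result of Miller \cite{MILLER} that the paper invokes, via \cite[page 58]{MACINTYRE1}, in the proof of Theorem~\ref{t.isom} to show that every isomorphism class is meager; note that it is also an immediate corollary of the very theorem you are proving together with the existence of finitely presented groups with unsolvable word problem (such a group is consistent with, hence embeds into, \emph{some} algebraically closed group, whose finite generating set then generates a subgroup with unsolvable word problem). So the ``main obstacle'' you flag at the end is not a technical difficulty that a more careful effective construction could overcome---it is a genuine obstruction, and no amount of care in choosing the HNN steps will make the closure argument preserve solvability. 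Macintyre's actual proof of this direction runs in the opposite way: given a finitely generated $G$ with unsolvable word problem, one constructs (by a forcing/omitting-types argument) a single algebraically closed group that omits the quantifier-free type of a generating tuple of $G$, i.e.\ into which $G$ does not embed; the recursion theory is used to show that unsolvability of the word problem is exactly what makes that type omissible in an existentially closed model.
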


\begin{remark}\label{r.prop_of_ac}
It is well-known that algebraically closed groups are simple \cite{NEUMANN1} and not finitely generated \cite{NEUMANN3}. Since they contain free subgroups by Theorem~\ref{t.alg_c_word_p} they are not locally finite either.
\end{remark}

\subsection{Logic}\label{ss.logic}

In this section we present essential logical preliminaries that can be found in any logic or model theory textbook. See, for example, \cite{HODGES}. Needless to say, we do not have space for a complete and precise development of the basic notions.

The \textbf{alphabet} of a first-order language $L$ is a set containing two types of symbols:
\begin{itemize}
    \item \textbf{Logical symbols}: Negation ($\lnot$), conjunction ($\land$), existential quantifier ($\exists$); an infinite set of variables ($x, y, z,\ldots$); equality symbol ($=$); parentheses, brackets, commas for punctuation. We use further logical symbols such as $\forall$, $\lor$ and $\implies$ for convenience.
    \item \textbf{Non-logical symbols}: function symbols $(f_i)_{i\in I}$ and relation symbols $(r_j)_{j\in J}$ for some index sets $I$ and $J$.
\end{itemize}
To define an alphabet one must determine the \textbf{arity} (that is, the number of arguments) of each function symbol and relation symbol. The $0$-ary function symbols are the \textbf{constant symbols}.

An $L$\textbf{-structure} $M$ is a set $\dom (M)$ (called the \textbf{universe} of $M$) together with an $n_i$-ary function $f_i^M:\dom(M)^{n_i}\to \dom(M)$ for each $i\in I$ and an $m_j$-ary relation $r_j^M\subseteq \dom (M)^{m_j}$ for each $j\in J$, where $n_i$ is the arity of the symbol $f_i$ and $m_j$ is the arity of the symbol $r_j$. These functions and relations are the \textbf{interpretations} of the corresponding symbols in $M$. Let $M,N$ be $L$-structures. Then $M$ is a \textbf{substructure} of $N$ (and $N$ is an extension of $M$) if $\dom(M)\subseteq\dom(N)$ and the restriction of every function $f_i^N$ and relation $r_j^N$ to $\dom(M)$ coincides with the corresponding function $f_i^M$ and relation $r_j^M$ of $M$.

\textbf{Terms.} Every variable and constant symbol is a term. If $t_1,\dots,t_n$ are terms and $f_i$ is an $n$-ary function symbol, then $f_i(t_1,\dots,t_n)$ is a term.

\textbf{Formulas.} If $t_1$ and $t_2$ are terms, then $t_1=t_2$ is a formula. If $t_1,\dots,t_n$ are terms and $r$ is an $n$-ary relation symbol, then $r(t_1,\dots,t_n)$ is a formula. These two types of formulas are the \textbf{atomic formulas}. If $\varphi_1, \varphi_2$ are formulas, then $\lnot \varphi_1$, $\varphi_1\land \varphi_2$ and $\exists x\varphi_1$ are formulas. Again, we allow other logical connectives and the $\forall$ quantifier for convenience. Formulas without quantifiers are called \textbf{quantifier-free}. Thus $\exists x \varphi$ is a formula even if $x$ does not occur in $\varphi$. In practice one does not encounter such formulas.

\textbf{Bound and free variables.} In a quantifier-free formula every variable is \textbf{free}. The free variables of a formula of the form $\exists x\varphi$ (or $\forall x\varphi$) are the free variable of $\varphi$ except $x$. The non-free variables of a formula are called \textbf{bound} variables. A formula without free variables is a \textbf{sentence}. Usually $\varphi(x_1,\dots,x_n)$ denotes a formula with free variables $x_1,\dots,x_n$.

\textbf{Evaluation and truth.} For an $L$-structure $M$ a \textbf{variable assignment} is a function that maps every variable to an element of $M$. \emph{Given a variable assignment $e$} we define the \textbf{evaluation} of terms and formulas recursively:

The variable $x$ evaluates to $e(x)$. The term $f(t_1,\dots,t_n)$ evaluates to $f^M(t_1[e],\dots,t_n[e])$, where $t_i[e]$ is the evaluation of $t_i$.

A formula $\varphi$ of the form $t_1=t_2$ is \textbf{true} in $M$ if $t_1[e]=t_2[e]$. For a relation symbol $r$ in $L$ a formula $\varphi$ of the form $r(t_1,\dots,t_n)$ is true in $M$ if $(t_1[e],\dots,t_n[e])\in r^M$. The formula $\varphi_1\land\varphi_2$ is true in $M$ if both $\varphi_1$ and $\varphi_2$ are true in $M$. The formula $\lnot\varphi$ is true in $M$ if $\varphi$ is not true in $M$. The formula $\exists x\varphi$ is true in $M$ if there is an element $a\in M$ such that $\varphi$ is true in $M$ with the modified variable assignment $e'$ that maps $x$ to $a$ and equals $e$ elsewhere.

We write $M\models\varphi[e]$ if the formula $\varphi$ is true in $M$ with the variable assignment $e$. We also say that $M$ \textbf{satisfies} $\varphi[e]$. The evaluated variables occuring in a given formula are the \textbf{parameters}. It is easy to prove that the truth value of a sentence does not depend on the variable assignment (we write $M\models\varphi$ if the sentence $\varphi$ is true in $M$). A set of sentences is a \textbf{theory}. An $L$-structure $M$ is a \textbf{model} of the $L$-theory $\Gamma$ if every element of $\Gamma$ is true in $M$. A theory is \textbf{consistent} if it has a model.

A formula is \textbf{existential} if it is of the form $\exists x_1,\ldots\exists x_n\varphi(x_1,\dots,x_n,y_1,\dots,y_k)$, where $\varphi$ is quantifier-free. The \textbf{existential closure} of a formula $\varphi(x_1,\dots,x_n)$ is $\exists x_1,\dots,x_n\varphi(x_1,\dots,x_n)$. It is easy to see that for every existential formula $\varphi$ and evaluation $e$ if $M\models\varphi[e]$, then $\varphi[e]$ is true in every extension of $M$ as well.

The natural generalization of algebraic closedness is existential closedness that is defined relative to a fixed theory $\Gamma$. A model $M$ of $\Gamma$ is \textbf{existentially closed} if for every extension $M'$ of $M$ the following holds: if an existential formula $\varphi$ with parameters from $M$ is true in $M'$, then it is true in $M$ as well. See \cite[Section 8.1]{HODGES} for an introduction to existentially closed models.

Two $L$-structures are \textbf{elementarily equivalent} if they satisfy the same $L$-sentences. Two $L$-structures are \textbf{isomorphic} if there is a bijection between them that preserves relations and functions.
It is easy to see that isomorphic $L$-structures are elementarily equivalent.

A formula $\varphi$ is in \textbf{prenex normal form} (PNF) if it is written as a string of quantifiers and bound variables followed by a quantifier-free part. Every first-order formula is equivalent to some formula in PNF. We define the formula classes $\forall_n$ and $\exists_n$ for formulas in PNF. Let both $\forall_0$ and $\exists_0$ denote the class of quantifier-free formulas in PNF. A formula in PNF is in $\forall_{n+1}$ if it is of the form $\forall x_1\dots \forall x_k\varphi$ with $\varphi\in\exists_n$. Similarly, a formula in PNF is in $\exists_{n+1}$ if it is of the form $\exists x_1\dots\exists x_k\varphi$ with $\varphi\in\forall_n$. We allow $k=0$, hence we have $\forall_n,\exists_n\subseteq\forall_{n+1},\exists_{n+1}$ for every $n\in\nat$.

\textbf{Example.} The alphabet of the language $L$ of group theory contains the logical symbols and a binary function symbol $m$ and a constant symbol $1$. (There are variants: we could eliminiate $1$ which is used only for convenience; we could introduce a $1$-ary function symbol for inverses.) Thus an $L$-structure is just a set with a binary operation and an element marked with the symbol $1$. Terms are expressions like $m(x_3,m(1,x_7))$. The axioms of group theory are the following sentences:
$$\forall x\ \forall y\ \forall z\ (m(x,m(y,z))=m(m(x,y),z))\qquad (\text{associativity})$$
$$\forall x\ (m(x,1)=x\land m(1,x)=x)\qquad (\text{identity element})$$
$$\forall x\ \exists y\ (m(x,y)=1\land m(y,x)=1)\qquad (\text{inverse})$$
Groups are $L$-structures that are models of these sentences. Naturally, one replaces $m(x,y)$ with $x\cdot y$ for convenience. These axioms are in PNF. The first two are $\forall_1$ formulas and the third is a $\forall_2$ formula.

\subsection{Descriptive set theory}\label{ss.desc_set_theory}

In this section we present well-known theorems and notions. All of them can be found in \cite{KECHRIS}.

A topological space $(X,\tau)$ is called \textbf{Polish} if it is separable and completely metrizable (that is, there exists a complete metric $d$ on $X$ such that $(X,d)$ is homeomorphic to $(X,\tau)$).

It is clear that countable discrete spaces are Polish. In particular, $2=\{0,1\}$ and $\nat$ with the discrete topologies are Polish. The following proposition is well-known:

\begin{prop}\label{p.prod_polish}
Countable products of Polish spaces are Polish. In particular, $2^A$ and $\nat^A$ are Polish for any countable set $A$.
\end{prop}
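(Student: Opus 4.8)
The plan is to produce a single explicit complete metric on the product that induces the product topology, and then to exhibit a countable dense set. Let $(X_n)_{n\in\nat}$ be Polish spaces and fix for each $n$ a complete metric $d_n$ compatible with the topology of $X_n$. The first step is a routine normalization: replacing $d_n$ by $\bar d_n=\min(d_n,1)$ yields a metric that is still complete and still induces the same topology (the two metrics have the same Cauchy sequences and the same convergent sequences), so I may assume every factor carries a complete metric bounded by $1$. Then, on $X=\prod_{n\in\nat}X_n$, I would define
$$d(x,y)=\sum_{n\in\nat}2^{-n-1}\,\bar d_n(x_n,y_n).$$
The series converges, being dominated by $\sum_n 2^{-n-1}=1$, and symmetry, positivity and the triangle inequality follow termwise from the corresponding properties of the $\bar d_n$.

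The central step is to check that $d$ induces exactly the product topology on $X$. For one inclusion, every basic product-open set $\prod_n U_n$ (with $U_n=X_n$ for all but finitely many $n$, say $n\notin F$) contains a $d$-ball around each of its points: choosing $\varepsilon$ small enough that $2^{n+1}\varepsilon$ is less than the $\bar d_n$-distance from the point to the complement of $U_n$ for each $n\in F$ forces every coordinate in $F$ to stay inside $U_n$. For the reverse inclusion, given a $d$-ball $B_d(x,\varepsilon)$ I would pick $N$ with $\sum_{n>N}2^{-n-1}<\varepsilon/2$ and take the basic open set constraining the first $N+1$ coordinates to small $\bar d_n$-balls; the tail contributes at most $\varepsilon/2$, so this basic open set lies inside $B_d(x,\varepsilon)$. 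Equivalently, one verifies that $d$-convergence coincides with coordinatewise convergence, which is the defining notion for the product topology.

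Completeness and separability then follow quickly. If $(x^{(k)})_k$ is $d$-Cauchy, then for each fixed $n$ the inequality $\bar d_n(x^{(k)}_n,x^{(l)}_n)\leq 2^{n+1}d(x^{(k)},x^{(l)})$ shows that $(x^{(k)}_n)_k$ is $\bar d_n$-Cauchy, hence converges to some $x_n\in X_n$; the point $x=(x_n)_n$ is then the $d$-limit by the coordinatewise characterization of convergence together with a standard tail estimate. For separability, fix a countable dense set $D_n\subseteq X_n$ and a basepoint $p_n\in X_n$ for each $n$; the collection of all $x\in X$ that lie in $D_n$ for finitely many coordinates and equal $p_n$ elsewhere is countable and, by the description of basic open sets above, dense.

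Finally, the ``in particular'' clause is immediate: a countable set $A$ is in bijection with an initial segment of $\nat$ or with $\nat$ itself, and both $2=\{0,1\}$ and $\nat$ with the discrete topology are Polish (as already noted), so $2^A$ and $\nat^A$ are countable products of Polish spaces and hence Polish. I expect the main obstacle to be the topology-matching step, where both inclusions must be argued carefully; the completeness and separability arguments are the standard easy consequences once the metric $d$ is in place.
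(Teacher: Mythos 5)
Your proof is correct and is the standard argument; the paper itself gives no proof of this proposition, citing it as well-known (it is \cite[Section 3.A]{KECHRIS}). All the steps you outline --- truncating the metrics, the weighted-sum metric $\sum_n 2^{-n-1}\bar d_n$, the two inclusions matching it to the product topology, coordinatewise completeness, and the eventually-constant dense set --- are exactly the textbook proof, so there is nothing to compare against.
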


A subset $E$ of a topological space $X$ is
\begin{itemize}
    \item \textbf{nowhere dense} if the closure of $E$ has empty interior,
    \item \textbf{meager} if it is a countable union of nowhere dense sets,
    \item \textbf{comeager} if its complement is meager.
\end{itemize}

A topological space $X$ is \textbf{Baire} if every nonempty open set is nonmeager in $X$.

Recall the Baire category theorem.

\begin{theorem}\label{t.bct}
Every completely metrizable topological space is Baire.
\end{theorem}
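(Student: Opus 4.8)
The plan is to work with a fixed complete metric $d$ inducing the topology of $X$, which exists by the assumption of complete metrizability; since being Baire is a purely topological property, this loses no generality. I would first reformulate the goal. Showing that every nonempty open set is nonmeager is equivalent to showing that the intersection of countably many dense open sets is dense, and the latter is the more convenient formulation. To see the reduction in the direction I need, suppose some nonempty open set $U$ were meager, say $U=\bigcup_{n} E_n$ with each $E_n$ nowhere dense, and set $F_n=\overline{E_n}$ and $D_n=X\setminus F_n$. Each $D_n$ is open and, because $F_n$ is closed with empty interior, dense. If $\bigcap_n D_n$ is dense, then it meets the nonempty open set $U$; but $\bigcap_n D_n=X\setminus\bigcup_n F_n\subseteq X\setminus\bigcup_n E_n=X\setminus U$, so the intersection is in fact disjoint from $U$, a contradiction. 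Hence $U$ cannot be meager.

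Second, I would prove the core statement: if $(D_n)_{n\in\nat}$ are dense open sets, then $\bigcap_n D_n$ meets every nonempty open set $W$. The idea is a nested closed ball construction. Using density and openness of $D_0$, pick $x_0$ and $0<r_0<1$ with the closed ball $\overline{B}(x_0,r_0)\subseteq D_0\cap W$. Inductively, since $D_{n+1}$ is dense and open, the set $D_{n+1}\cap B(x_n,r_n)$ is nonempty and open, so I can choose $x_{n+1}$ and $0<r_{n+1}<2^{-(n+1)}$ with $\overline{B}(x_{n+1},r_{n+1})\subseteq D_{n+1}\cap B(x_n,r_n)$. The closed balls are then nested and their radii tend to $0$, so the centres $(x_n)$ form a Cauchy sequence.

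Third, and this is where completeness does the essential work, the Cauchy sequence $(x_n)$ converges to some $x\in X$. Because the balls are nested, the tail of the sequence lies in each closed ball $\overline{B}(x_n,r_n)$, which is closed, so the limit satisfies $x\in\overline{B}(x_n,r_n)$ for every $n$; thus $x\in W$ and $x\in D_n$ for all $n$, giving $x\in W\cap\bigcap_n D_n$. This proves that $\bigcap_n D_n$ is dense and, combined with the reduction above, the theorem. The one delicate point is concentrated entirely here: completeness is exactly what guarantees that the nested shrinking closed balls have a common point, and without it the construction yields only a Cauchy sequence with no limit. The remaining steps, namely finding a closed ball around any point of a given open set inside that set, and checking that nowhere dense sets have dense open complements, are routine consequences of the metric structure.
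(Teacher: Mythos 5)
Your proof is correct and complete: the reduction from ``every nonempty open set is nonmeager'' to ``countable intersections of dense open sets are dense,'' the nested shrinking closed balls, and the use of completeness to extract the common point are all carried out properly, and you rightly flag completeness as the one place where the hypothesis is actually used. The paper itself states this theorem without proof, as a recalled classical fact from Kechris; your argument is the standard one found there, so there is nothing to reconcile.
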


Since Polish spaces are completely metrizable, we may apply the Baire category theorem in them.

Recall that a subset $E$ of a topological space $X$ is $G_\delta$ if it is a countable intersection of open sets. Another well-known theorem characterizes Polish subspaces of Polish spaces.

\begin{theorem}\label{t.G_delta}
A subspace $E$ of a Polish space $X$ is Polish if and only if $E$ is $G_\delta$ in $X$.
\end{theorem}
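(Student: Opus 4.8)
The statement is the classical Alexandrov--Mazurkiewicz characterization, and I would prove the two implications separately, treating ``$G_\delta \Rightarrow$ Polish'' as the routine direction and the converse as the main obstacle. Throughout, ``Polish'' for a subspace means that it admits a complete compatible metric, and I will freely use the elementary fact that a closed subset of a complete metric space is itself complete in the induced metric, together with Proposition~\ref{p.prod_polish}.

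For the direction that every $G_\delta$ set $E=\bigcap_n U_n$ (each $U_n$ open in $X$) is Polish, the plan is to realize a homeomorphic copy of $E$ as a \emph{closed} subset of a Polish space. Fix a complete, bounded compatible metric $d$ on $X$ (replace $d$ by $\min(d,1)$ if necessary) and discard any $U_n$ equal to $X$, so that each $C_n:=X\setminus U_n$ is nonempty and $x\mapsto d(x,C_n)$ is a positive continuous function on $U_n$. Define $\phi:E\to X\times\mathbb{R}^{\nat}$ by $\phi(x)=\bigl(x,(1/d(x,C_n))_{n\in\nat}\bigr)$. This is a continuous injection whose inverse is the restriction of the projection onto the first coordinate, hence a homeomorphism onto its image. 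The key point is that the image is closed: if $\phi(x^{(k)})\to(y,(t_n)_n)$, then $d(x^{(k)},C_n)\to 1/t_n>0$ for every $n$ (boundedness of $d$ forces $t_n\geq 1$), so by continuity $d(y,C_n)=1/t_n>0$, i.e. $y\in U_n$ for all $n$; thus $y\in E$ and the limit is $\phi(y)$. Since $X\times\mathbb{R}^{\nat}$ is Polish by Proposition~\ref{p.prod_polish}, its closed subset, and therefore $E$, is Polish.

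For the converse, which I expect to be the real work, suppose the subspace $E$ carries a complete compatible metric $\rho$. Passing to the closure $\overline{E}$ in $X$ (which is closed, hence $G_\delta$, in $X$, and in which $E$ is dense), it suffices to show $E$ is $G_\delta$ in $\overline{E}$, so I may assume $E$ is dense in $X$. The idea is to capture $E$ as the set of points where the two metrics cohere: for each $n\geq 1$ put
$$A_n=\{x\in X:\ \text{some open }V\ni x\text{ in }X\text{ has }d\text{-diam}\leq 1/n\text{ and }\rho\text{-diam}(V\cap E)\leq 1/n\}.$$
Each $A_n$ is open, since a single witnessing $V$ certifies membership of all its points, so $\bigcap_n A_n$ is $G_\delta$; the crux is the identity $E=\bigcap_n A_n$. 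The inclusion $E\subseteq\bigcap_n A_n$ is immediate from compatibility of $\rho$: a small $\rho$-ball about $x\in E$ is relatively open in $E$, hence of the form $V\cap E$, and intersecting $V$ with a small $d$-ball makes its $d$-diameter small too. The reverse inclusion is where completeness enters and is the main obstacle. Given $x\in\bigcap_n A_n$, choose witnessing neighborhoods and replace them by finite intersections to obtain a decreasing sequence $V_1\supseteq V_2\supseteq\cdots$ with both diameter bounds holding at level $n$; by density pick $x_n\in V_n\cap E$. The nesting makes $(x_n)$ simultaneously $d$-Cauchy in $X$ and $\rho$-Cauchy in $E$, so completeness of $\rho$ yields a $\rho$-limit $x_\infty\in E$, which is also the $d$-limit because $\rho$ induces the subspace topology; meanwhile the shrinking $d$-diameters force $x_n\to x$ in $d$. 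Uniqueness of limits in $X$ gives $x=x_\infty\in E$. The only delicate points are arranging the neighborhoods to be nested and extracting the two Cauchy conditions from the diameter bounds; everything else is bookkeeping.
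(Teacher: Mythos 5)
Your proof is correct. The paper does not actually prove this theorem---it is quoted as a classical fact with a pointer to the Kechris reference---and your argument is essentially the standard one given there: the first direction realizes a $G_\delta$ set as a closed subset of $X\times\mathbb{R}^{\nat}$ via the map $x\mapsto\bigl(x,(1/d(x,C_n))_n\bigr)$, and the converse is the usual coherence-of-metrics (oscillation) argument, with all the delicate points (the bound $t_n\geq 1$ forcing the limit to stay in every $U_n$, the nesting of the witnessing neighborhoods, and the reduction to $E$ dense in $X$ using that closed sets are $G_\delta$ in metric spaces) handled correctly.
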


It is a nice exercise to prove the following important proposition.

\begin{prop}\label{p.comeager}
A subset of a Baire space is comeager if and only if it contains a dense $G_\delta$ set.
\end{prop}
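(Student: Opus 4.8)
The plan is to prove the two implications separately, noting that only one direction actually uses the Baire hypothesis. Throughout let $X$ be a Baire space and $E\subseteq X$.

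First I would dispatch the easy direction: suppose $E$ contains a dense $G_\delta$ set $G=\bigcap_n U_n$ with each $U_n$ open. Since $G\subseteq U_n$ and $G$ is dense, each $U_n$ is dense, so its complement $X\setminus U_n$ is closed with empty interior, i.e.\ nowhere dense. Hence $X\setminus G=\bigcup_n(X\setminus U_n)$ is meager, so $G$ is comeager; as $G\subseteq E$, the set $E$ is comeager as well. This direction uses nothing beyond the definitions of nowhere dense and meager.

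For the converse, suppose $E$ is comeager, so that $X\setminus E=\bigcup_n N_n$ with each $N_n$ nowhere dense. Passing to closures only enlarges the sets while keeping them nowhere dense, so I may assume each $N_n$ is closed; then each $X\setminus N_n$ is open and dense. Setting $G=\bigcap_n(X\setminus N_n)$, I obtain a $G_\delta$ set with $G\subseteq X\setminus\bigcup_n N_n\subseteq E$. It remains only to show that $G$ is dense.

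The key step, and the only place the Baire hypothesis enters, is the standard fact that in a Baire space a countable intersection of dense open sets is dense. I would prove this directly from the definition given above: if $G=\bigcap_n D_n$ with each $D_n$ dense open failed to meet some nonempty open $V$, then $V\subseteq\bigcup_n(X\setminus D_n)$, a countable union of (closed) nowhere dense sets, so the nonempty open set $V$ would be meager, contradicting that $X$ is Baire. Applying this to the dense open sets $X\setminus N_n$ shows that $G$ is dense, which completes the proof. The main, and rather mild, obstacle is exactly this density argument; everything else is bookkeeping with the definitions.
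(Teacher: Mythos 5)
Your proof is correct and complete: the easy direction uses only the definitions, the converse correctly passes to closures to get a $G_\delta$ set inside $E$, and you rightly isolate the Baire hypothesis in the single step showing that a countable intersection of dense open sets is dense. The paper offers no proof of Proposition~\ref{p.comeager} --- it is stated as ``a nice exercise'' --- but your argument is exactly the standard one the authors have in mind, so there is nothing to compare beyond noting that you have filled in the omitted details accurately.
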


\begin{obs}\label{o.F_sigma}
It follows immediately from Proposition~\ref{p.comeager} that an $F_\sigma$ subset of a Baire space is nonmeager if and only if it has nonempty interior.
\end{obs}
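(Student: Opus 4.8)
The statement splits into two implications, and the plan is to treat them separately, noting that only one of them actually uses the $F_\sigma$ hypothesis and Proposition~\ref{p.comeager}. The implication ``nonempty interior $\Rightarrow$ nonmeager'' is the easy half and holds for an arbitrary subset of any Baire space: if $E$ has nonempty interior then it contains a nonempty open set $U$; since $X$ is Baire, $U$ is nonmeager, and as $E\supseteq U$, the set $E$ is nonmeager as well. No appeal to Proposition~\ref{p.comeager} or to $E$ being $F_\sigma$ is needed here.

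For the converse I would argue by contraposition and pass to the complement, which is exactly where $E$ being $F_\sigma$ enters. Assume $E$ has empty interior; the goal is to show $E$ is meager, i.e.\ that $X\setminus E$ is comeager. Since $E$ is $F_\sigma$, the complement $X\setminus E$ is $G_\delta$. The hypothesis that $E$ has empty interior says precisely that $X\setminus E$ is dense. Thus $X\setminus E$ is a dense $G_\delta$ set, so it trivially contains a dense $G_\delta$ set, namely itself, and Proposition~\ref{p.comeager} yields that $X\setminus E$ is comeager. Hence $E$ is meager, completing the contrapositive.

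There is no serious obstacle here; the one point worth flagging is why the $F_\sigma$ assumption cannot be dropped in the forward direction. Without it, a nonmeager set need not have nonempty interior: for instance the irrational numbers in the real line form a comeager, hence nonmeager, $G_\delta$ set with empty interior. The sole role of the $F_\sigma$ hypothesis is to guarantee that $X\setminus E$ is $G_\delta$, which is what allows Proposition~\ref{p.comeager} to be applied in the clean form ``a dense $G_\delta$ set is comeager''.
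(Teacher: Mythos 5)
Your proof is correct and is exactly the argument the paper intends: the observation is stated without an explicit proof beyond ``follows immediately from Proposition~\ref{p.comeager}'', and the nontrivial direction is precisely your contrapositive (an $F_\sigma$ set with empty interior has a dense $G_\delta$ complement, hence is meager). The easy direction from the definition of a Baire space and your remark on why the $F_\sigma$ hypothesis is needed are both accurate.
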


A subset $E$ of a topological space $X$ has the \textbf{Baire property} (BP) if it can be written as $U\Delta M$ with $U$ open and $M$ meager in $X$ (where $\Delta$ denotes the symmetric difference). It is well-known that sets of the BP form a $\sigma$-algebra and therefore every Borel set has the BP.

A subset $A$ of a Polish space $Y$ is \textbf{analytic} if there exists a Polish space $X$, a Borel set $B\subseteq X$ and a continuous function $f:X\to Y$ such that $A=f(B)$. In a Polish space clearly every Borel set is analytic. It is also known that every analytic set has the BP.

A \textbf{Polish group} is a topological group whose topology is Polish. A well-known Polish group is $S_\infty$, that is, the group of all permutations of $\nat$ with the subspace topology inherited from $\nat^\nat$, see \cite[Example 7 of Subsection 9.B]{KECHRIS}.

Let $G$ be a topological group acting on a topological space $X$. The action is \textbf{continuous (resp. Borel)} if it is continuous (resp. Borel) as a $G\times X\to X$ function that maps $(g,x)$ to $g.x$. A subset $E$ of $X$ is $G$\textbf{-invariant} if it is a union of orbits. It is easy to see that if the action is continuous, then the map $x\mapsto g.x$ is a homeomorphism of $X$ for every $g\in G$. Thus a continuous action induces a homomorphism $\varphi:G\to\homeo(X)$, where $\homeo(X)$ is the group of homeomorphisms of $X$, hence $\varphi(G)$ is a subgroup of $\homeo(X)$.

We will need the following theorems \cite[Theorem 8.46]{KECHRIS} and \cite[Theorem 15.14]{KECHRIS}:

\begin{theorem}[Topological 0-1 law]\label{t.top0-1}
Let $X$ be a Baire space and $G$ a group of homeomorphisms of $X$ with the following homogeneity property: If $U,V$ are nonempty open sets in $X$, then there is $g\in G$ such that $g(U)\cap V\neq\emptyset$. Then every $G$-invariant subset of $X$ with the BP is either meager or comeager.
\end{theorem}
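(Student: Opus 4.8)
The plan is to exploit the Baire property to reduce the invariant set to an open set modulo a meager error, and then use $G$-invariance together with the homogeneity hypothesis to force that open set to be either empty or dense. So let $A\subseteq X$ be $G$-invariant with the BP, and write $A=U\Delta M$ with $U$ open and $M$ meager. If $U=\emptyset$, then $A=M$ is meager and we are done; hence I assume $U\neq\emptyset$ and aim to prove that $A$ is comeager. The first routine ingredient I would record is that every homeomorphism $g\in G$ maps meager sets to meager sets (since it preserves closures and interiors, it preserves nowhere dense sets), and that a dense open set is comeager (its complement is closed with empty interior, hence nowhere dense).

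Next I would transfer invariance to the open part. Since $A$ is $G$-invariant and each $g\in G$ is a homeomorphism, applying $g$ to $A=U\Delta M$ gives $A=g(A)=g(U)\Delta g(M)$, where $g(U)$ is open and $g(M)$ is meager. Combining $A\Delta U=M$ and $A\Delta g(U)=g(M)$, I obtain that
\[
U\,\Delta\,g(U)\ \subseteq\ M\cup g(M)
\]
is meager for every $g\in G$. This is the key structural consequence of invariance: the open set $U$ is almost fixed, up to a meager set, by the whole group.

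The crucial step, and the one where the homogeneity hypothesis is indispensable, is showing that $U$ is dense. Suppose not; then $V=X\setminus\overline{U}$ is a nonempty open set disjoint from $U$. By the homogeneity property there is $g\in G$ with $g(U)\cap V\neq\emptyset$. This intersection is a nonempty open set, hence nonmeager because $X$ is Baire; but since $V\cap U=\emptyset$ we have $g(U)\cap V\subseteq g(U)\setminus U\subseteq U\Delta g(U)$, which is meager by the previous step. This contradiction shows $U$ must be dense. I expect this density argument to be the main obstacle, in the sense that it is the only place where the two hypotheses genuinely interact and where one must convert the "almost invariance'' of $U$ into a topological statement via the Baire category theorem.

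Finally I would conclude. Being dense and open, $U$ is comeager, and since $A\Delta U=M$ is meager we have $A\supseteq U\setminus M$, a comeager set; therefore $A$ itself is comeager. Together with the meager case this establishes the dichotomy, completing the proof.
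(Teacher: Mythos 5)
Your proof is correct and complete. The paper itself gives no argument for this statement --- it is quoted verbatim from Kechris (Theorem 8.46) --- so there is nothing internal to compare against; your write-up supplies the missing proof. It is essentially the standard one: Kechris's version localizes via the open set $U(A)=\bigcup\{W \text{ open}: A \text{ comeager in } W\}$ and shows it is dense, whereas you work directly with the decomposition $A=U\Delta M$ and show that the ``almost invariance'' $U\Delta g(U)\subseteq M\cup g(M)$ forces $U$ to be dense. The two routes are interchangeable here; every step of yours (in particular the use of the Baire hypothesis to conclude that the nonempty open set $g(U)\cap V$ cannot be meager) checks out.
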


\begin{theorem}\label{t.isom_borel}
(Miller) If $(g,x)\mapsto g.x$ is a Borel action of the Polish group $G$ on the Polish space $X$, then every orbit is Borel.
\end{theorem}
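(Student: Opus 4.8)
The plan is to reduce the statement to the case of a \emph{continuous} action and then to recognize each orbit as the injective continuous image of a Polish space. For the reduction I would invoke the change-of-topology theorem of Becker and Kechris: given a Borel action of a Polish group $G$ on a Polish space $X$, there is a finer Polish topology $\tau'$ on $X$ with the same Borel $\sigma$-algebra for which the action becomes continuous. Since $\tau'$ generates the same Borel sets and the orbits are literally the same subsets of $X$, a set is Borel in $(X,\tau')$ if and only if it is Borel in $X$; hence it suffices to prove that orbits are Borel when the action is continuous.

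So assume the action is continuous and fix $x\in X$. The stabilizer $G_x=\{g\in G : g.x=x\}$ is the preimage of the closed set $\{x\}$ (points are closed since $X$ is metrizable) under the continuous orbit map $\pi\colon G\to X$, $\pi(g)=g.x$, and is therefore a closed subgroup of $G$. Since the quotient of a Polish group by a closed subgroup is again a Polish space, $G/G_x$ is Polish. The map $\pi$ is constant on left cosets of $G_x$, so by the universal property of the quotient topology it descends to a continuous map $\bar\pi\colon G/G_x\to X$. This map is injective: $\bar\pi(gG_x)=\bar\pi(g'G_x)$ means $g.x=g'.x$, i.e.\ $g^{-1}g'\in G_x$, so the two cosets coincide. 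By construction the image of $\bar\pi$ is exactly the orbit $G.x$.

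It remains to upgrade ``continuous image'' to ``Borel''. Here I would apply the Lusin--Souslin theorem: an injective Borel (in particular, continuous) image of a Borel subset of a Polish space is Borel. Applying it to the injective continuous map $\bar\pi$ on the whole Polish space $G/G_x$ yields that $G.x=\bar\pi(G/G_x)$ is Borel, as desired. The main obstacle is the reduction to the continuous case: without continuity the orbit map need not be controllable in any useful way, and the Becker--Kechris refinement is precisely the tool that makes the orbit map continuous while leaving the Borel structure---and hence the very notion of a Borel orbit---untouched. Once continuity is secured, the Polishness of $G/G_x$ and the Lusin--Souslin theorem finish the argument cleanly.
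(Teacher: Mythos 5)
The paper does not prove this statement at all --- it is quoted verbatim from Kechris's book (Theorem 15.14 there), so the only comparison available is with that classical proof. Your argument is correct, but it takes a genuinely different (and much heavier) route. The classical proof works with the Borel action directly: one first shows that the stabilizer $G_x$ of a \emph{Borel} action is already a closed subgroup (this is the real content, and is done by a Pettis/Baire-category argument applied to the Borel orbit map $g\mapsto g.x$), then picks a Borel transversal for the left cosets of $G_x$ and applies Lusin--Souslin to the resulting Borel injection onto the orbit. You instead outsource exactly this difficulty to the Becker--Kechris topology realization theorem, after which closedness of the stabilizer is trivial and the rest (Polishness of $G/G_x$, continuity and injectivity of $\bar\pi$, Lusin--Souslin) is routine and correctly carried out. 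What your approach buys is a clean two-line reduction and a conceptually transparent second half; what it costs is reliance on a theorem that is substantially deeper than the statement being proved and postdates it. Two points you should make explicit if you keep this route: (i) non-circularity --- the proof of the Becker--Kechris theorem (via Vaught transforms) does not use Borelness of orbits, so there is no vicious circle, but this deserves a remark; (ii) the fact that $G/G_x$ with the quotient topology is Polish for closed $G_x$ is itself a nontrivial standard fact and should carry a reference. Neither is a gap, but both are load-bearing.
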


\section{The space of multiplication tables}\label{s.the_space}

For convenience let $\nat\defeq\{1,2,3,\ldots\}$ throughout this paper.

By Proposition~\ref{p.prod_polish}  the space $\nnn$ of infinite tables of natural numbers is Polish. A clopen basis for $\nnn$ consists of sets of the form
\begin{equation}\label{eq.basis1}
    \left\{A\in\nnn:\ A(n_1,m_1)=k_1,\ldots,A(n_l,m_l)=k_l\right\}
\end{equation}
for $n_i,m_i,k_i\in\nat$, ($i=1,\ldots,l$). We will study the subspace
$$\calg\defeq\left\{A\in\nnn:\ A\text{ is the multiplication table of a group and $1$ is its identity element}\right\}.$$
We would like to apply the Baire category theorem (Theorem~\ref{t.bct}) in $\calg$. Therefore, by Theorem~\ref{t.G_delta} we need to prove that $\calg$ is $G_\delta$ in $\nnn$. We present the calculations to help readers not practiced in descriptive set theory. 

\begin{prop}
The set $\calg$ is $G_\delta$ in $\nnn$. Consequently, $\calg$ is a Polish space, hence the Baire category theorem holds in $\calg$.
\end{prop}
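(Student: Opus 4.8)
The plan is to exhibit $\calg$ as a countable intersection of open sets by expressing each defining condition of a group multiplication table as a constraint on finitely many table entries, hence as an open (indeed clopen) condition, and then combining these via countable unions and intersections. Recall that the basic clopen sets (\ref{eq.basis1}) fix finitely many values $A(n,m)=k$, so any condition of the form ``$A(n,m)=k$'' is clopen, and consequently ``$A(n,m)\in S$'' for $S\subseteq\nat$ is open (a union of clopen sets) while ``$A(n,m)\neq k$'' is also clopen.

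First I would enumerate the axioms that a table $A\in\nnn$ must satisfy to be the multiplication table of a group with identity $1$. There are four requirements. The identity axiom states that $A(1,n)=n$ and $A(n,1)=n$ for every $n\in\nat$. Associativity states that $A(A(n,m),k)=A(n,A(m,k))$ for all $n,m,k\in\nat$. The inverse axiom states that for every $n$ there exists $m$ with $A(n,m)=1$ and $A(m,n)=1$. (Closure is automatic, since $A$ already takes values in $\nat$, so no separate condition is needed; one also checks that left and right inverses coincide once associativity and identity hold, so a one-sided existence statement suffices.) The key observation is that each of the first two axioms is a countable conjunction of conditions each depending on finitely many entries of $A$.

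Next I would translate each axiom into a $G_\delta$ set. The identity condition $\{A : A(1,n)=n \text{ and } A(n,1)=n\}$ is an intersection over all $n$ of clopen sets, hence $G_\delta$ (in fact closed). For associativity I must be slightly careful because the entry $A(A(n,m),k)$ involves a value read at a position that itself depends on $A$; nonetheless, for fixed $n,m,k$ the set $\{A : A(A(n,m),k)=A(n,A(m,k))\}$ is clopen, since it is the countable union over all possible values $p=A(n,m)$ and $q=A(m,k)$ of the clopen condition ``$A(n,m)=p \land A(m,k)=q \land A(p,k)=A(n,q)$,'' and each such piece is clopen while the whole thing is both open and closed by a standard complementation argument. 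Intersecting over all $n,m,k$ yields a $G_\delta$ (closed) set. For the inverse axiom, the set $\{A : \exists m\, (A(n,m)=1 \land A(m,n)=1)\}$ is, for fixed $n$, a countable union of clopen sets, hence open; intersecting over all $n$ gives a $G_\delta$ set.

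Finally I would intersect these three $G_\delta$ sets to obtain $\calg$, and a countable intersection of $G_\delta$ sets is again $G_\delta$. By Theorem~\ref{t.G_delta} the subspace $\calg$ is then Polish, and by the Baire category theorem (Theorem~\ref{t.bct}) it is a Baire space. The main point requiring care—really the only subtle step—is the treatment of associativity, where the nested entry $A(A(n,m),k)$ means the condition is not literally a single basic clopen set but must be decomposed into a countable union of basic clopen pieces indexed by the intermediate values; once this decomposition is written out, verifying that the resulting set is open (and closed) is routine. Everything else reduces to the bookkeeping observation that each axiom constrains only finitely many coordinates at a time.
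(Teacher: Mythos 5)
Your proof is correct and follows essentially the same route as the paper: decompose $\calg$ into the three axiom sets (identity, associativity, inverses), observe that the first two are closed and the third is $G_\delta$, and handle the nested lookup $A(A(n,m),k)$ by quantifying over the intermediate values $A(n,m)$ and $A(m,k)$ to exhibit each associativity condition as clopen. The only cosmetic difference is that the paper writes that clopen set as a countable intersection (via an implication) while you write it as a countable union over intermediate values; both are valid.
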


\begin{proof}
The set $\calg$ is the intersection of the following sets defined by the group axioms:
$$\calg_1=\left\{A\in\nnn:\ \forall n,m,k\in\nat\   A(A(n,m),k)=A(n,A(m,k))\right\}\qquad (\text{associativity}),$$
$$\calg_2=\left\{A\in\nnn:\ \forall n\in\nat\ A(n,1)=A(1,n)=n\right\}\qquad (1\text{ is the identity element}),$$
$$\calg_3=\left\{A\in\nnn:\ \forall n\in\nat\ \exists k\in\nat\ A(n,k)=A(k,n)=1\right\}\qquad (\text{inverses exist}).$$
Clearly
$$\calg_2=\bigcap_{n\in\nat}\underbrace{\left(\left\{A\in\nnn:\ A(n,1)=n\right\}\cap\left\{A\in\nnn:\ A(1,n)=n\right\}\right)}_{\text{clopen}}$$
is closed and
$$\calg_3=\bigcap_{n\in\nat}\bigcup_{k\in\nat}\left(\left\{A\in\nnn:\ A(n,k)=1\right\}\cap\left\{A\in\nnn:\ A(k,n)=1\right\}\right)$$
is $G_\delta$. For $\calg_1$ note that for any fixed triple $n,m,k\in\nat$ we have $A(A(n,m),k)=A(n,A(m,k))$ if and only if $\forall x,y,z\in\nat\ (A(n,m)=x\land A(m,k)=y\implies (A(x,k)=z \iff A(n,y)=z))$. Hence we may write $\calg_1$ as
$$\bigcap_{n,m,k\in\nat}\ \bigcap_{x,y,z\in\nat}\underbrace{\left(
\begin{array}{cc}
    & \{A:\ A(n,m)\neq x\}\cup\{A:\ A(m,k)\neq y\}\ \cup\\
    & \cup\ \{A:\ A(x,k)=z\land A(n,y)=z\}\cup\{A:\ A(x,k)\neq z\land A(n,y)\neq z\}
\end{array}
\right)}_{\text{clopen}}.$$
So $\calg_1$ is also closed. We conclude that $\calg=\calg_1\cap\calg_2\cap\calg_3$ is $G_\delta$.
\end{proof}

We reserve the notation $\cl G$ for the unique group of multiplication table $G$ and underlying set $\nat$.

\begin{remark}
When we consider elements of $\calg$ we use the usual shorthands of group theory. For example, to define the subspace of torsion groups we write
$$\{G\in\calg:\ \forall n\in\nat\ \exists k\in\nat\ n^k=1\}$$
instead of the rather cumbersome
$$\{G\in\calg:\ \forall n\in\nat\ \exists k\in\nat\ \underbrace {G(G(\ldots G(G(n,n),n),\ldots, n),n)}_{k\text{ times}}=1\}.$$
\end{remark}
\begin{remark}\label{r.inverse}
We also use inverses for convenience:

Fix any $n,m,k\in\nat$. Then e.g. $nm^{-1}=k$ abbreviates $\forall x\ (mx=1\implies nx=k)$. At first sight, this defines a closed subset of $\calg$. However, the group axioms imply that it is equivalent to $\exists x\ (mx=1\land nx=k)$, which defines an open set. Thus $nm^{-1}=k$ defines a clopen set.
\end{remark} 

\begin{remark}\label{r.basis1}
The above-mentioned standard clopen basis (\ref{eq.basis1}) of $\nnn$ induces a clopen basis for $\calg$: sets of the form
\begin{equation}\label{e.basis1}
    \{G\in\calg:\ \forall i\leq l\ (n_i\cdot m_i=k_i)\}
\end{equation}
with $l\in\nat$ and $n_i,m_i,k_i\in\nat$ for all $i\leq l$ constitute a basis.
For practical reasons we introduce another clopen basis for $\calg$.
\end{remark}

\begin{prop}\label{p.basis2}
For any finite sets $\{U_1,\dots,U_k\}$ and $\{V_1,\dots,V_l\}$ of words in variables $x_1,\dots,x_n$ and for any $a_1,\dots,a_n,b_1,\dots,b_k,c_1,\dots,c_l\in\nat$ the set
\begin{equation}
    \left\{G\in\calg:\ \bigwedge_{i=1}^k U_{i}(a_1,\dots,a_n)=b_i\land \bigwedge_{j=1}^l V_{j}(a_1,\dots,a_n)\neq c_j\right\}
\end{equation}
is clopen, and sets of this form constitute a basis for $\calg$.
\end{prop}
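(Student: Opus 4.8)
The plan is to show two things separately: first that each set of the stated form is clopen, and second that sets of this form constitute a basis. For the clopenness, I would reduce everything to the previously established basis \eqref{e.basis1}. A single equation $U_i(a_1,\dots,a_n)=b_i$ involving a word $U_i$ and fixed parameters $a_1,\dots,a_n$ is, after evaluating the word step by step, a finite condition on finitely many entries of the multiplication table; each such condition determines a clopen set by Remark~\ref{r.inverse} (to handle the inverses that may appear in words) together with the basic clopen sets from \eqref{eq.basis1}. Thus each equation $U_i(a_1,\dots,a_n)=b_i$ defines a clopen set. An inequation $V_j(a_1,\dots,a_n)\neq c_j$ is the complement of the analogous equation $V_j(a_1,\dots,a_n)=c_j$, hence also clopen. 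A finite intersection of clopen sets is clopen, so the whole set is clopen.

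The substantive direction is that these sets form a basis, i.e.\ that every nonempty open set in $\calg$ contains a set of the stated form around each of its points. Since \eqref{e.basis1} is already a basis, it suffices to show that every basic open set of the form \eqref{e.basis1} can be written as a (finite) union, or can be refined at each point, by sets of the new form. In fact the sets \eqref{e.basis1} are themselves a special case of the new form: take each word $U_i$ to be the single product $x_{2i-1}\cdot x_{2i}$ in two variables, choose the parameters $a_j$ to realize the relevant $n_i,m_i$, and let the right-hand sides be $k_i$, with no inequations present ($l=0$). This shows every set of the old basis form \eqref{e.basis1} is of the new form, and since the old sets form a basis, so do the new ones.

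I would therefore organize the proof as: (1) a short lemma or direct observation that a single equation $U(a_1,\dots,a_n)=b$ defines a clopen set, proved by induction on the structure of the word $U$ using \eqref{eq.basis1} and Remark~\ref{r.inverse}; (2) the observation that inequations define clopen sets by complementation, and finite intersections preserve clopenness; (3) the identification of \eqref{e.basis1} as a subfamily of the new sets, yielding the basis property for free from Remark~\ref{r.basis1}.

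The main obstacle, such as it is, lies in step~(1): carefully tracking how evaluating a word $U(a_1,\dots,a_n)$ with inverses reduces to a finite Boolean combination of conditions on table entries. Because a word may contain inverses $x_i^{-1}$, one must invoke Remark~\ref{r.inverse} to see that the intermediate value $a_i^{-1}$ is pinned down by a clopen condition, and then compose these finitely many clopen conditions via the binary operation. This is routine but is the only place where genuine verification is needed; everything else is formal manipulation of finite unions, intersections, and complements of clopen sets.
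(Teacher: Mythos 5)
Your proposal is correct and takes essentially the same route as the paper's proof: both reduce the basis claim to the observation that the new family contains the old basic clopen sets \eqref{e.basis1}, reduce clopenness to the case of a single equation via the algebra of clopen sets, and handle that case by induction on the word using the argument of Remark~\ref{r.inverse}.
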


\begin{proof}
This family extends (\ref{e.basis1}), so it suffices to show that its elements are clopen. Since clopen sets form an algebra, it suffices to consider only one word $W$ and only the case of equation. The proof is a straightforward induction on the length of $W$ and uses the same argument as Remark~\ref{r.inverse}.
\end{proof}

\begin{obs}\label{o.induced_homeo}
An important observation is that permutations induce homeomorphisms. Let $\varphi:\nat\to\nat$ be a bijection that fixes 1. Then the \emph{induced homeomorphism} $h_\varphi:\calg\to\calg$ is defined as follows. Intuitively, we define $h_\varphi(G)$ by pushing forward the structure of $G$ via $\varphi$. More precisely, for any  $G\in\calg$ the multiplication table $h_\varphi(G)$ is defined by the equations $i\cdot j\defeq\varphi(G(\varphi^{-1}(i),\varphi^{-1}(j)))$ for all $i,j\in\nat$. Thus $\varphi$ is an isomorphism between $\cl G$ and $\cl{h_\varphi(G)}$. It is an easy exercise to verify that $h_\varphi$ is indeed a homeomorphism.
\end{obs}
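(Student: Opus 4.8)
The plan is to verify directly the three features that make $h_\varphi$ a homeomorphism of $\calg$: that it maps $\calg$ into itself, that it is a bijection, and that it together with its inverse is continuous. Because the intended inverse is a map of exactly the same form, the continuity argument need only be run once, which keeps the whole verification short and symmetric.

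First I would check that $H\defeq h_\varphi(G)$ is a group table with identity $1$, i.e.\ that $H\in\calg$. Writing $*$ for the operation $i*j\defeq H(i,j)$ on $\nat$, the defining equation $H(i,j)=\varphi(G(\varphi^{-1}(i),\varphi^{-1}(j)))$ rearranges (set $a=\varphi^{-1}(i)$, $b=\varphi^{-1}(j)$) to $\varphi(G(a,b))=\varphi(a)*\varphi(b)$ for all $a,b\in\nat$; that is, $\varphi$ is a bijection of $\nat$ carrying the operation of $\cl G$ to $*$. Since the group axioms are preserved when the operation is transported along a bijection, $(\nat,*)$ is a group, and as $\varphi$ fixes $1$ while $1$ is the identity of $\cl G$, the element $1$ is the identity of $(\nat,*)$. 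Hence $H\in\calg$, and the very same computation shows that $\varphi$ is an isomorphism $\cl G\to\cl H$, which is the second assertion of the observation.

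Next I would show that $h_\varphi$ is a bijection by exhibiting its inverse. As $\varphi$ fixes $1$, so does $\varphi^{-1}$, hence $h_{\varphi^{-1}}$ is defined by the same recipe. A one-line computation, using $\varphi\circ\varphi^{-1}=\varphi^{-1}\circ\varphi=\mathrm{id}$, gives $h_{\varphi^{-1}}(h_\varphi(G))=G$ and $h_\varphi(h_{\varphi^{-1}}(G))=G$ for every $G\in\calg$, so $h_\varphi^{-1}=h_{\varphi^{-1}}$.

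Finally, for continuity it suffices to take preimages of the subbasic clopen sets $\{H\in\calg:\ H(n,m)=k\}$, since the basic sets of the form (\ref{e.basis1}) are finite intersections of these and preimages commute with intersections. For such a set,
$$h_\varphi^{-1}\bigl(\{H:\ H(n,m)=k\}\bigr)=\{G:\ \varphi(G(\varphi^{-1}(n),\varphi^{-1}(m)))=k\}=\{G:\ G(\varphi^{-1}(n),\varphi^{-1}(m))=\varphi^{-1}(k)\},$$
which is again a basic clopen set of the form (\ref{e.basis1}). Thus $h_\varphi$ is continuous, and applying the same argument to $\varphi^{-1}$ shows $h_\varphi^{-1}=h_{\varphi^{-1}}$ is continuous as well, so $h_\varphi$ is a homeomorphism. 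There is no genuine obstacle here; the only point demanding a little care is the bookkeeping of $\varphi$ versus $\varphi^{-1}$ in the defining equation, together with the observation that $\varphi^{-1}$ also fixes $1$, which is precisely what makes the argument symmetric and lets the inverse be handled for free.
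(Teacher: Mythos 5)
Your verification is correct and is exactly the routine check the paper has in mind when it leaves this as "an easy exercise": transporting the group structure along $\varphi$ shows $h_\varphi(G)\in\calg$ and that $\varphi$ is an isomorphism, $h_{\varphi^{-1}}$ is the inverse map, and preimages of the basic clopen sets are again basic clopen. No gaps; the bookkeeping of $\varphi$ versus $\varphi^{-1}$ is handled correctly throughout.
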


\begin{remark}\label{r.polish_acts}
Let $S_\infty^*$ denote the set of permutations of $\nat$ that fix $1$. This is a clopen subgroup of the Polish group $S_\infty$, which was defined in Subsection~\ref{ss.desc_set_theory}. It is easy to check that Observation~\ref{o.induced_homeo} defines a continuous action of $S_\infty^*$ on $\calg$ whose orbits are exactly the isomorphism classes of $\calg$ (cf. \cite[page 96]{KECHRIS}). By Theorem~\ref{t.isom_borel} it follows that isomorphism classes are Borel.
\end{remark}

\subsection*{Sketch of the general setting}

Whenever we make a remark with the label ‘‘Model-theoretic generalization’’ we refer to the following setting.

Let $L$ be the first-order language with the following alphabet: $r_i$ is an $n_i$-ary relation symbol for every $i\in I$, $f_j$ is an $m_j$-ary function symbol for every $j\in J$, where $I$, $J$ are countable sets.

We define the space of $L$-structures on the universe $\nat$:
$$X_L\defeq\left(\prod_{i\in I} 2^{\nat^{n_i}}\right)\times \left(\prod_{j\in J} \nat^{\nat^{m_j}}\right).$$
It is a Polish space by Proposition~\ref{p.prod_polish}. An element $(x,y)\in X_L$ is a pair of sequences such that $x_i$ is a function from $\nat^{n_i}$ to $\{0,1\}$ for every $i\in I$ and $y_j$ is a function from $\nat^{m_j}$ to $\nat$ for every $j\in J$. Let $M(x,y)$ be the associated structure, that is, on the universe $\nat$ we interpret $r_i$ as the relation defined by $x_i$ and $f_j$ as the function $y_j$.

The generalization of Remark~\ref{r.basis1} is that a clopen basis element can be defined by prescribing finitely many functions and relations on finitely many tuples. Also sets defined by quantifier-free formulas constitute a clopen basis, which generalizes Proposition~\ref{p.basis2}

The Polish group $S_\infty$ of all permutations of $\nat$ acts on $X_L$ as follows. For any permutation $g\in S_\infty$ we let $g.(x,y)=(u,v)$ if and only if $u_i$ is the pushforward of $x_i$ via $g$ and $v_j$ is the pushforward of $y_j$ via $g$ for every $i\in I$ and $j\in J$. Observe that $M(x,y)$ and $M(u,v)$ are isomorphic if and only if there is some $g\in S_\infty$ such that $g.(x,y)=(u,v)$. Also note that for any $g\in G$ the map $x\mapsto g.x$ is a homeomorphism of $X_L$.

Let $\Gamma$ be an inductive theory, that is, a set of $\forall_2$-sentences of $L$. The theory $\Gamma$ is countable because $L$ is countable. It is straightforward to verify that
$$X_\Gamma\defeq\{(x,y)\in X_L:\ M(x,y)\models\Gamma\}$$
is a $G_\delta$ subset of $X_L$; therefore it is a Polish subspace. Clearly $X_\Gamma$ is isomorphism-invariant, hence for any $g\in S_\infty$ the map $x\mapsto g.x$ is a homeomorphism of $X_\Gamma$ as well. That is, we generalized Observation~\ref{o.induced_homeo}.

\section{Basic notions and denseness}\label{s.dense}

\begin{defi}\label{d.group_property}
A set $\calp\subseteq\calg$ is a \textbf{group property} if it is invariant under isomorphism, that is, for any $G\in\calg$ if $\cl G$ is isomorphic to $\cl H$ for some $H\in\calp$, then $G\in\calp$.
\end{defi}

Note that $\calp$ is a group property if and only if it is a union of orbits of $S_\infty^*$. One usually studies properties defined by (not necessarily first-order) formulas.

\begin{defi}
A group property $\calp\subseteq\calg$ is \textbf{generic} if it is a comeager subset of $\calg$.
\end{defi}

We often formulate the genericity of $\calp$ less formally: the generic $G\in\calg$ is of property $\calp$.
The main objective of this paper is to study generic group properties in $\calg$.

Proposition~\ref{p.comeager} gives us a standard way to prove that a group property $\calp$ is comeager: it suffices to show that it is $G_\delta$ and dense. In many cases it can be calculated directly from the definition that $\calp$ is $G_\delta$. Denseness can be shown separately, for example, by the following lemma.

\begin{lemma}\label{l.dense}
Let $\cals,\calp\subseteq\calg$ be group properties, $\calp\subseteq\cals$. If for every $G\in\cals$ there is some $H\in\calp$ such that $\cl G$ can be embedded into $\cl H$, then $\calp$ is dense in $\cals$.
\end{lemma}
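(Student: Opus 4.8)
The plan is to check denseness directly on the clopen basis of $\calg$ consisting of the sets
$$B=\{G\in\calg:\ n_i\cdot m_i=k_i\ \text{for all}\ i\le l\}$$
from Remark~\ref{r.basis1} (see (\ref{e.basis1})). Since these form a basis, it suffices to show that every such $B$ with $B\cap\cals\neq\emptyset$ also meets $\calp$. So I would fix such a $B$, pick some $G\in B\cap\cals$, and use the hypothesis to obtain $H\in\calp$ together with an embedding $\iota\colon\cl G\to\cl H$. Because $\calp$ is a group property, any relabeling of $\cl H$ stays in $\calp$; the whole point is thus to relabel the underlying set $\nat$ of $H$ so that the finitely many products prescribed by $B$ become correct.

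To carry this out I would look at the finite set $F\defeq\{1,n_1,m_1,k_1,\dots,n_l,m_l,k_l\}\subseteq\nat$ of symbols occurring in $B$. As $\iota$ is injective and fixes $1$, the assignment $\iota(x)\mapsto x$ ($x\in F$) is a well-defined injection between two finite subsets of $\nat$ fixing $1$, hence extends to a bijection $\varphi\colon\nat\to\nat$ fixing $1$ (the two complements are countably infinite). I would then set $H'\defeq h_\varphi(H)$, the image of $H$ under the homeomorphism induced by $\varphi$ as in Observation~\ref{o.induced_homeo}. Since $\varphi$ is an isomorphism $\cl H\to\cl{H'}$, we have $\cl{H'}\cong\cl H$ and therefore $H'\in\calp$.

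Finally I would verify $H'\in B$, which is the only computation. For each $i$ one has $\varphi^{-1}(n_i)=\iota(n_i)$ and $\varphi^{-1}(m_i)=\iota(m_i)$ by construction, so the defining equation of $h_\varphi$ gives $H'(n_i,m_i)=\varphi\big(H(\iota(n_i),\iota(m_i))\big)=\varphi\big(\iota(n_i\cdot m_i)\big)=\varphi(\iota(k_i))=k_i$, using that $\iota$ is a homomorphism and that $n_i\cdot m_i=k_i$ holds in $\cl G$. Hence $H'\in\calp\cap B$, and since $\calp\subseteq\cals$ also $H'\in\cals$, so $\calp$ meets the nonempty relatively open set $B\cap\cals$; as $B$ was arbitrary this yields denseness. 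I do not anticipate a real obstacle: everything rests on a basic open set constraining only finitely many table entries, so matching them is a finite relabeling that is always available. The only delicate point is the bookkeeping with $\varphi$ and $\varphi^{-1}$ in the induced-homeomorphism formula, and the hypotheses are used exactly where expected — $\calp\subseteq\cals$ keeps the relabeled group inside $\cals$, and the embedding assumption lets the prescribed products be realized inside a member of $\calp$.
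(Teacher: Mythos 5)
Your proposal is correct and follows essentially the same route as the paper: pick a point of $\calb\cap\cals$, embed it into some $H\in\calp$, and use a bijection of $\nat$ extending the inverse of the embedding on the finite support of $\calb$ to push $H$ into $\calb$ via the induced homeomorphism. The only cosmetic difference is that you work with the general basic clopen sets of Remark~\ref{r.basis1} while the paper uses the ``full $k\times k$ table'' basic sets, which changes nothing.
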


\begin{remark}
The formulation of Lemma~\ref{l.dense} involves $\cals$ only to facilitate its direct application in subspaces. We mostly work with $\cals=\calg$.
\end{remark}

\begin{proof}[Proof of Lemma~\ref{l.dense}]
If $\cals$ is empty, then we are done. Otherwise, fix an arbitrary basic clopen $\calb=\{G\in\calg:\ \forall i,j\leq k\ (i\cdot j=m_{i,j})\}$ with $k\in\nat$ and $m_{i,j}\in\nat$ for all $i,j\leq k$ such that $\calb\cap\cals\neq\emptyset$. Pick some $G\in \calb\cap\cals$. Let $M\defeq\{1,\dots,k\}\cup\{m_{i,j}:\ i,j\leq k\}$.

By assumption there is some $H\in\calp$ and an embedding $\varphi:\cl G\to \cl H$. Since $\nat\setminus M$ and $\nat\setminus\varphi(M)$ are of the same cardinality, there is a bijection $\psi:\nat\to \nat$ extending $\varphi^{-1}|_{\varphi(M)}$. Consider the image of $H$ by the induced homeomorphism $h_\psi$. Now by the definition of $h_\psi$ we have $h_\psi(H)\in\calb$ because $\psi\supseteq\varphi^{-1}|_{\varphi(M)}$ and $\varphi$ is a homomorphism. Also $h_\psi(H)\in\calp$ since $\calp$ is a group property.
\end{proof}

\begin{remark}
\textbf{(Model-theoretic generalization)} Lemma~\ref{l.dense} can be generalized without any difficulty or additional assumption.
\end{remark}

As a direct application we show that simpleness is generic. This also follows from Theorem~\ref{t.alg_closed} by Remark~\ref{r.prop_of_ac}.

\begin{cor}\label{c.simple}
The generic countably infinite group is simple. That is, the set $\cals\subseteq\calg$ of multiplication tables of simple groups is comeager.
\end{cor}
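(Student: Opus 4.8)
The plan is to apply Proposition~\ref{p.comeager} in the standard way: show that $\cals$ is dense and $G_\delta$, so that it is comeager. Denseness will come immediately from Lemma~\ref{l.dense} with $\calp=\cals$ and $\cals_{\text{lemma}}=\calg$. Indeed, every countable group $\cl G$ embeds into a countable \emph{simple} group: one can use the classical fact (see for instance the Higman--Neumann--Neumann embedding theorems, or Scott's Theorem~\ref{t.emb_alg_closed} together with Remark~\ref{r.prop_of_ac}, since algebraically closed groups are simple and every countable group embeds into one) that there is a countable simple group $\cl H$ with $\cl G \hookrightarrow \cl H$. Picking a multiplication table $H\in\cals$ for such a group verifies the hypothesis of Lemma~\ref{l.dense}, so $\cals$ is dense in $\calg$.

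\medskip

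The remaining task is to show that $\cals$ is $G_\delta$. A group $\cl G$ is simple if and only if it has no nontrivial proper normal subgroup, equivalently if and only if for every $n\in\nat$ with $n\neq 1$, the normal closure of $n$ in $\cl G$ is all of $\cl G$. The natural way to express this is: for every $n\neq 1$ and every $m\in\nat$, the element $m$ lies in the subgroup generated by the conjugates of $n$. I would write this as
$$
\cals=\bigcap_{\substack{n\in\nat\\ n\neq 1}}\ \bigcap_{m\in\nat}\ \bigcup_{\text{(finite data)}}\ \{G\in\calg:\ \ldots\},
$$
where the innermost condition states that $m$ equals a specific product of conjugates $g_i^{-1}n^{\pm 1}g_i$ for some finitely many witnesses $g_i\in\nat$. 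Crucially, each such innermost set is clopen: it is exactly of the form described in Proposition~\ref{p.basis2}, since fixing the witnesses and the word structure prescribes a word identity $W(n,g_1,\dots,g_r)=m$ (together with the disequation $n\neq 1$). Thus the union over all finite witness-data is open (a countable union of clopen sets), and the double intersection over $n$ and $m$ is then $G_\delta$.

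\medskip

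The one point requiring care -- and the main obstacle -- is formalizing ‘‘$m$ is a product of conjugates of $n$'' as a \emph{countable} union of clopen conditions rather than as a quantifier ranging over an unbounded word length. The subgroup generated by the conjugacy class of $n$ consists of all finite products $g_1^{-1}n^{\varepsilon_1}g_1\cdots g_r^{-1}n^{\varepsilon_r}g_r$ with $r\in\nat$, $g_i\in\nat$, and $\varepsilon_i\in\{+1,-1\}$; since both the length $r$ and the witnesses $g_i$ range over countable sets, the defining union is genuinely countable, and each summand is clopen by Proposition~\ref{p.basis2} (the exponents $\varepsilon_i=\pm 1$ are handled via the inverse shorthand of Remark~\ref{r.inverse}). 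Once this is checked, $\cals$ is a $G_\delta$ set, and being dense it is comeager by Proposition~\ref{p.comeager}, completing the proof. (Alternatively, one may bypass the direct computation entirely and deduce the corollary from the fact that algebraically closed groups are simple, as noted in Remark~\ref{r.prop_of_ac}, once Theorem~\ref{t.alg_closed} is available; but the direct argument above is self-contained and serves as an illustration of Lemma~\ref{l.dense}.)
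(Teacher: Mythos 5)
Your proposal is correct and follows essentially the same route as the paper: density via Lemma~\ref{l.dense} using the embedding of every countable group into a countable simple group, and the $G_\delta$ property by writing ``the normal closure of each $n\neq 1$ contains every $m$'' as a countable intersection of countable unions of clopen sets given by word identities in conjugates of $n$ (Proposition~\ref{p.basis2}). The paper's version subsumes your explicit $\varepsilon_i=\pm 1$ bookkeeping by simply quantifying over all words $W$ in the variables, but this is only a cosmetic difference.
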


\begin{proof}
It is known that every group can be embedded into a simple group of the same cardinality. (See the remark after \cite[Theorem 4.4]{KEGEL}.) Thus by Lemma~\ref{l.dense} it suffices to prove that $\cals$ is $G_\delta$ in $\calg$.

For a $G\in\calg$ the group $\cl G$ is simple if and only if for every $n\in\nat\setminus\{1\}$ the normal subgroup generated by $n$ in $\cl G$ contains every $k\in\nat$. Equivalently, for every $n\in\nat\setminus\{1\}$ every $k\in\nat$ equals to a word of finitely many conjugates of $n$. Thus
$$\cals=\left\{G\in\calg:\ \forall n\neq 1\ \forall k\ \exists l\ \exists g_1,\dots,g_l\ \exists W(x_1,\dots,x_l) \text{ such that }W(g_1^{-1}ng_1,\dots,g_l^{-1}ng_l)=k\right\}=$$
$$=\bigcap_{n,k\in\nat}\ \bigcup_{l\in\nat}\ \bigcup_{g_1,\dots,g_l\in\nat}\ \bigcup_{\substack{W\text{ is a word}\\ \text{in }l\text{ variables}}}\ \underbrace{\{G\in\calg:\ W(g_1^{-1}ng_1,\dots,g_l^{-1}ng_l)=k\}}_{\text{clopen by Proposition~\ref{p.basis2}}},$$
which is a $G_\delta$ set.
\end{proof}

\section{A 0-1 law for group properties}\label{s.0-1}

In this section we prove a 0-1 law for group properties and we present corollaries on isomorphism classes, embeddability and properties defined by first-order sentences.

\begin{notation}\label{n.supp}
If $\calb$ is a basic clopen set of the form
$\{G\in\calg:\ \forall i,j\leq k\ (i\cdot j=m_{i,j})\}$ with $k\in\nat$ and $m_{i,j}\in\nat$ for all $i,j\leq k$, then let $\supp\calb\defeq\{1,\dots,k\}\cup\{m_{i,j}:\ i,j\leq k\}$.
\end{notation}

By Observation~\ref{o.induced_homeo} and Remark~\ref{r.polish_acts} the Polish group $S_\infty^*$ acts continuously on $\calg$. As we have noted in Subsection~\ref{ss.desc_set_theory}, a continuous action of a group $G$ on a topological space $X$ induces a homomorphism from $G$ to $\homeo(X)$. In the case of $S_\infty^*$ and $\calg$ this homomorphism is $\varphi\mapsto h_\varphi$, which is injective because every nonidentity permutation $\varphi$ modifies some multiplication table. Thus we can identify $S_\infty^*$ with its image in $\homeo(\calg)$.

We need the following lemma:

\begin{lemma}\label{l.homogeneity}
For any nonempty basic clopen sets $\calu,\calv\subseteq\calg$ there is $\varphi\in S_\infty^*$ such that $h_\varphi(\calu)\cap \calv\neq\emptyset$.
\end{lemma}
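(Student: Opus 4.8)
### Proof proposal

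The plan is to exploit the freedom we have in building groups on the underlying set $\nat$. The sets $\calu$ and $\calv$ are basic clopen sets, so by Proposition~\ref{p.basis2} each one prescribes finitely many word-equations on finitely many elements; let me think of them as described on finite ``supports'' $\supp\calu$ and $\supp\calv$ in the sense of Notation~\ref{n.supp}. Concretely, pick any $U\in\calu$ and $V\in\calv$ (these exist since $\calu,\calv$ are nonempty basic clopen sets, hence nonempty open subsets of $\calg$, and every nonempty basic clopen set of $\calg$ is inhabited because groups on $\nat$ realizing any consistent finite multiplication pattern exist). The goal is to produce a single group $W\in\calg$ together with a permutation $\varphi\in S_\infty^*$ so that $W\in\calv$ and $h_\varphi(W)\in\calu$; then $W\in h_\varphi(\calu)\cap\calv$ witnesses the claim.

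First I would record that membership of a multiplication table in a basic clopen set depends only on the finitely many products indexed by the support. So $W\in\calv$ is a condition on $W$ restricted to $\supp\calv$, and $h_\varphi(W)\in\calu$ is, after unwinding the definition of $h_\varphi$ in Observation~\ref{o.induced_homeo}, a condition on $W$ restricted to $\varphi^{-1}(\supp\calu)$. The natural strategy is to choose $\varphi$ so that $\varphi^{-1}(\supp\calu)$ and $\supp\calv$ are disjoint (apart from the forced common point $1$, which both supports contain and which $\varphi$ fixes). Since both supports are finite and $\nat$ is infinite, such a $\varphi\in S_\infty^*$ clearly exists: map the finitely many points of $\supp\calu$ (other than $1$) bijectively onto a set of points disjoint from $\supp\calv\setminus\{1\}$, and extend to a permutation fixing $1$.

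Having separated the two supports, I would construct the desired group $W$ as a free-product-style amalgam so that it simultaneously realizes the pattern of $\cl V$ on $\supp\calv$ and the pattern of $\cl U$ (transported through $\varphi$) on $\varphi^{-1}(\supp\calu)$. The cleanest route is: take the free product of a copy of (the subgroup of) $\cl V$ generated by $\supp\calv$ with a copy of (the subgroup of) $\cl U$ generated by $\supp\calu$, embed this countable group into a group on the underlying set $\nat$ via a bijection that sends the $\cl V$-part correctly onto $\supp\calv$ and the $\cl U$-part onto $\varphi^{-1}(\supp\calu)$, and fills in the rest of $\nat$ arbitrarily. Because the two finite patterns live on disjoint parts of $\nat$, there is no clash between the constraints, and the resulting $W\in\calg$ satisfies both sets of equations: the $\calv$-equations directly, and the $\calu$-equations because $\varphi$ carries $\varphi^{-1}(\supp\calu)$ to $\supp\calu$ and is an isomorphism $\cl W\to\cl{h_\varphi(W)}$, so the $\cl U$-pattern holds in $h_\varphi(W)$ on $\supp\calu$.

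The main obstacle is bookkeeping rather than conceptual: I must verify that the word-equations defining $\calu$ and $\calv$ are genuinely independent once the supports are disjoint, i.e.\ that no product required by $\calv$ forces a value on a point in $\varphi^{-1}(\supp\calu)$ and conversely. This is where the free-product construction earns its keep — in a free product the subgroups generated by the two factors interact trivially, so the products internal to each support are computed within that factor and never collide. The only shared element is the identity $1$, which is fixed by $\varphi$ and treated identically in both patterns, so it causes no conflict. I would close by noting that since $\varphi\in S_\infty^*$ and $W\in h_\varphi(\calu)\cap\calv$, the intersection is nonempty, as required.
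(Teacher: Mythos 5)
Your argument is correct and is essentially the paper's own proof: both choose a permutation fixing $1$ that separates the two supports and then realize the two finite patterns simultaneously in one group obtained as a product of $\cl U$ and $\cl V$ (the paper pulls back the direct product $U\times V$ along a bijection $\nat\to\nat\times\nat$ extending the obvious coordinate embeddings of the supports, whereas you take a free product of the finitely generated subgroups on the supports and transport it back to $\nat$; either product serves the same joint-embedding purpose). Two cosmetic repairs: since $h_\varphi(W)\in\calu$ yields $W\in h_{\varphi^{-1}}(\calu)$ rather than $W\in h_\varphi(\calu)$, the witnessing permutation should be $\varphi^{-1}$ (harmless, as $S_\infty^*$ is a group), and in the degenerate case where both supports generate finite groups your free product is finite, so it must first be embedded into a countably infinite group before being put in bijection with $\nat$ --- a step the paper's choice of the full direct product $U\times V$ avoids automatically.
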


\begin{proof}
Let $\calu=\{G\in\calg:\ \forall i,j\leq k\ (i\cdot j=m_{i,j}\}$. Let $M\defeq\supp \calu$ and $N\defeq\supp\calv$. Pick any $U\in \calu$ and $V\in \calv$. Let $\varphi:\nat\to\nat$ be a bijection such that $\varphi(1)=1$ and $\varphi(M)\cap N=\{1\}$. We claim that $h_\varphi(\calu)\cap\calv\neq\emptyset$. To verify this fix any bijection $\Psi:\nat\to\nat\times\nat$ that extends the finite maps
$$\begin{array}{lll}
i_M:\varphi(M)\to M\times\{1\},\ \varphi(m)\mapsto (m,1) & \text{and} & i_N:N\to \{1\}\times N,\ n\mapsto(1,n).
\end{array}$$
We define the multiplication table $H$ on $\nat$ by pulling back $U\times V$ via $\Psi$. Then we have $H\in \calv$ by $\Psi\supseteq i_N$. For $H\in h_\varphi(\calu)$ observe that
$$h_\varphi(\calu)=\{G\in\calg:\ \forall i,j\leq k\ (\varphi(i)\cdot\varphi(j)=\varphi(m_{i,j}))\}$$
and $\Psi\supseteq i_M$. Thus $h_\varphi(\calu)\cap\calv$ is nonempty.
\end{proof}

\begin{theorem}\label{t.0-1}
\textbf{(0-1 law)} Any group property $\calp\subseteq\calg$ that has the BP is either meager or comeager.
\end{theorem}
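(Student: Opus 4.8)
My plan is to recognize this as a direct application of the Topological 0-1 law (Theorem~\ref{t.top0-1}), with the underlying Baire space being $\calg$ and the acting group being $S_\infty^*$ identified with its image in $\homeo(\calg)$ under the map $\varphi\mapsto h_\varphi$. The first thing I would verify is that the hypotheses match: $\calg$ is Polish, hence completely metrizable, hence Baire by the Baire category theorem (Theorem~\ref{t.bct}). Next, a group property is by definition isomorphism-invariant, and we already observed that this is equivalent to being a union of orbits of $S_\infty^*$ (see the note after Definition~\ref{d.group_property}); thus group properties are precisely the $S_\infty^*$-invariant subsets of $\calg$. So the only substantive thing left to check is the homogeneity property required by Theorem~\ref{t.top0-1}.

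The homogeneity property demands that for any two \emph{nonempty open} sets $U,V\subseteq\calg$ there exists $\varphi\in S_\infty^*$ with $h_\varphi(U)\cap V\neq\emptyset$. Lemma~\ref{l.homogeneity} already gives exactly this, but only for nonempty \emph{basic clopen} sets $\calu,\calv$. The key step I would carry out is to upgrade from the basic clopen case to arbitrary open sets by using that the basic clopen sets of Remark~\ref{r.basis1} form a basis for the topology of $\calg$. Concretely, given nonempty open $U$ and $V$, I would choose nonempty basic clopen sets $\calu\subseteq U$ and $\calv\subseteq V$. Applying Lemma~\ref{l.homogeneity} yields $\varphi\in S_\infty^*$ with $h_\varphi(\calu)\cap\calv\neq\emptyset$. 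Since $h_\varphi$ is a homeomorphism we have $h_\varphi(\calu)\subseteq h_\varphi(U)$, and together with $\calv\subseteq V$ this gives
$$
h_\varphi(U)\cap V\ \supseteq\ h_\varphi(\calu)\cap\calv\ \neq\ \emptyset,
$$
which is precisely the homogeneity property.

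With all hypotheses of Theorem~\ref{t.top0-1} verified, I would conclude that every $S_\infty^*$-invariant subset of $\calg$ with the BP is either meager or comeager. Since these invariant sets are exactly the group properties, this is the desired statement. I do not anticipate a genuine obstacle here: the entire content of the argument has been prepared in advance, with Lemma~\ref{l.homogeneity} doing the real combinatorial work. The only point requiring any care is the reduction from open to basic clopen sets, and that is immediate once one recalls that the basic clopen sets form a basis and that the $h_\varphi$ are homeomorphisms (so that monotonicity $\calu\subseteq U\implies h_\varphi(\calu)\subseteq h_\varphi(U)$ holds). Thus the proof is essentially a bookkeeping exercise in matching the setup to the hypotheses of the abstract theorem.
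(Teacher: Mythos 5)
Your proposal is correct and follows exactly the paper's route: apply the Topological 0-1 law (Theorem~\ref{t.top0-1}) to $\calg$ with $S_\infty^*$ acting via $\varphi\mapsto h_\varphi$, identifying group properties with invariant sets and invoking Lemma~\ref{l.homogeneity} for the homogeneity hypothesis. The only difference is that you spell out the (routine) upgrade from basic clopen sets to arbitrary nonempty open sets, which the paper leaves implicit.
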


\begin{proof}
We apply Theorem~\ref{t.top0-1} to the Polish space $\calg$ and the Polish group $S_\infty^*$ (as a subgroup of $\homeo(\calg)$). The $S_\infty^*$-invariant sets are exactly the group properties and the required homogenity property holds by Lemma~\ref{l.homogeneity}.
\end{proof}

\begin{remark}\label{r.gen_0-1_JEP}
\textbf{(Model-theoretic generalization)} To generalize Lemma~\ref{l.homogeneity} and Theorem~\ref{t.0-1} it suffices to assume that the class of countably infinite models of $\Gamma$ has the Joint Embedding Property (JEP), that is, any two countably infinite models of $\Gamma$ can be embedded into a third one. As we have seen, direct products witness that the JEP holds for countably infinite groups.
\end{remark}

As we will illustrate by numerous examples, most group properties that one uses in practice possess the BP.

\begin{defi}
For a group $H$ let
$$\begin{array}{ccc}
    \cali_H\defeq\{G\in\calg:\ \cl G\cong H\} &\text{ and } & \cale_H\defeq\{G\in\calg:\ H\text{ is embeddable into }\cl G\}. \\
\end{array}$$
These are group properties.
For a first-order formula $\varphi$ of $L$ (the language of group theory) and an evaluation $e:\{x_1,x_2,\dots\}\to\nat$ of the variables of $L$ let $$\cals_{\varphi[e]}\defeq\{G\in\calg:\ \cl G\models \varphi[e]\}.$$
For a sentence $\varphi$ this is clearly a group property (isomorphic groups satisfy the same first-order sentences).
\end{defi}

The isomorphism classes are Borel and thereby they have the BP as we have already noted in Remark~\ref{r.polish_acts}.

\begin{theorem}\label{t.isom0-1}
Every isomorphism class is either meager or comeager. That is, for any group $H$ the set $\cali_H$ is either meager or comeager in $\calg$.
\end{theorem}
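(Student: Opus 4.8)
The plan is to read off Theorem~\ref{t.isom0-1} as an immediate corollary of the 0-1 law (Theorem~\ref{t.0-1}), which asserts that any group property with the BP is either meager or comeager. First I would observe that $\cali_H$ qualifies as a group property in the sense of Definition~\ref{d.group_property}: it is precisely the set of $G\in\calg$ with $\cl G\cong H$, so it is invariant under isomorphism, being a union of $S_\infty^*$-orbits (in fact a single orbit when $H$ is countably infinite, and empty otherwise). Consequently both hypotheses of Theorem~\ref{t.0-1} will be met once we know that $\cali_H$ has the Baire property.

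The only point requiring any input is therefore the verification that $\cali_H$ has the BP, and here I would appeal to what has already been recorded. By Remark~\ref{r.polish_acts}, the action of $S_\infty^*$ on $\calg$ given by $\varphi\mapsto h_\varphi$ is continuous, and its orbits are exactly the isomorphism classes of $\calg$. A continuous action is in particular Borel, so by Miller's theorem (Theorem~\ref{t.isom_borel}) every orbit is Borel; thus $\cali_H$ is Borel. Since, as noted in Subsection~\ref{ss.desc_set_theory}, the sets with the BP form a $\sigma$-algebra containing the open sets, every Borel set has the BP, and in particular $\cali_H$ does.

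Combining these two observations, Theorem~\ref{t.0-1} applies to $\calp=\cali_H$ and yields that $\cali_H$ is either meager or comeager, which is the claim. I do not expect a genuine obstacle in this argument itself, since all of the substance has been front-loaded: the homogeneity input for the 0-1 law is Lemma~\ref{l.homogeneity}, and the Borelness of isomorphism classes rests on Miller's theorem. The single conceptual step worth flagging is that isomorphism classes, though defined by an inherently second-order condition (existence of an isomorphism), are nonetheless Borel and hence BP, which is exactly what makes the 0-1 law applicable here.
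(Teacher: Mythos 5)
Your proposal is correct and matches the paper's argument exactly: the paper derives Theorem~\ref{t.isom0-1} by noting (just before the statement, via Remark~\ref{r.polish_acts} and Miller's theorem) that isomorphism classes are Borel and hence have the BP, and then applying the 0-1 law of Theorem~\ref{t.0-1}. Nothing is missing.
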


\begin{theorem}\label{t.embed0-1}
For any group $H$ the set $\cale_H$ is analytic; therefore it is either meager or comeager.
\end{theorem}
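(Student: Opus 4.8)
The second assertion follows at once from the first: analytic sets have the Baire property (as recorded in Subsection~\ref{ss.desc_set_theory}), $\cale_H$ is a group property by definition, and hence Theorem~\ref{t.0-1} forces $\cale_H$ to be meager or comeager. So the entire content is the analyticity of $\cale_H$. The plan is to rewrite the statement ``$\cl G$ contains a copy of $H$'' as ``there exists an embedding $\iota$ of $H$ into $\cl G$'', and then to realize this existential quantifier as a projection of a Borel set along a Polish factor, which is analytic by definition.

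If $H$ is uncountable then no countable group contains it, so $\cale_H=\emptyset$ is trivially analytic; I would therefore assume $H$ is countable, with underlying set $D$. Embeddings of $H$ into $\cl G$ are exactly the injective homomorphisms $\iota\colon H\to\cl G$, which I regard as elements of the Polish space $\nat^D$ (Polish by Proposition~\ref{p.prod_polish}). Working in the Polish product $\calg\times\nat^D$, I would introduce
$$B\defeq\{(G,\iota)\in\calg\times\nat^D:\ \iota\text{ is an injective homomorphism }H\to\cl G\}.$$
Then $G\in\cale_H$ if and only if $(G,\iota)\in B$ for some $\iota$, i.e.\ $\cale_H$ is the image of $B$ under the continuous projection $\calg\times\nat^D\to\calg$. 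Hence it suffices to show that $B$ is Borel: then $\cale_H$ is the continuous image of a Borel subset of a Polish space, so it is analytic.

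It remains to verify that $B$ is closed. Injectivity is the condition $\iota(a)\neq\iota(b)$ for all distinct $a,b\in D$, and each such condition is clopen in $\nat^D$, so injectivity defines a closed set. For the homomorphism condition I would fix $a,b\in D$, put $c\defeq a\cdot_H b$ (a fixed element of $D$), and consider the requirement $\iota(c)=G(\iota(a),\iota(b))$. The main point to check—and the only place where the two coordinates $G$ and $\iota$ genuinely interact—is that this condition is clopen. This holds because the evaluation map $(G,\iota)\mapsto G(\iota(a),\iota(b))$ and the map $(G,\iota)\mapsto\iota(c)$ are both continuous into the \emph{discrete} space $\nat$, so their coincidence set is clopen (this is exactly where discreteness of $\nat$ matters, turning the coincidence set of two continuous $\nat$-valued functions into a clopen set rather than merely a closed one). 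Intersecting these clopen sets over the countably many pairs $(a,b)\in D\times D$, together with the closed injectivity condition, yields that $B$ is closed, completing the proof that $\cale_H$ is analytic and hence, by the dichotomy above, either meager or comeager.
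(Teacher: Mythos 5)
Your proposal is correct and follows essentially the same route as the paper: both realize $\cale_H$ as the projection onto $\calg$ of a Borel subset of the Polish product $\calg\times\nat^H$ cut out by the injectivity and multiplication-preservation conditions, and both derive the dichotomy from Theorem~\ref{t.0-1} via the Baire property of analytic sets. The only (harmless) cosmetic difference is that you observe each homomorphism condition is clopen, making your set closed, whereas the paper only records it as open and obtains a $G_\delta$ set.
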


\begin{proof}
Fix a group $H$. We may assume that $
H$ is countable (possibly finite) because otherwise $\cale_H$ is empty. We find a $G_\delta$ subset $\cale'$ of the Polish space $\calg\times\nat^H$ such that its projection on $\calg$ is $\cale_H$.

For a $G\in\calg$ the group $H$ is embeddable into $\cl G$ if and only if there is an injection $\Psi:H\to\cl G$ such that for every $a,b,c\in H$ for which $H\models a\cdot b=c$ we have $\cl G\models\Psi(a)\cdot\Psi(b)=\Psi(c)$. Thus $\cale_H$ is the projection of the set $$\begin{array}{cccc}
    \ds\cale'\defeq(\calg\times\calb)\cap\left(\bigcap_{\substack{a,b,c\in H\\ H\models\ a\cdot b=c}}\calh(a,b,c)\right), &\text{ where } & \calb=\left\{\Psi\in\nat^H:\ \Psi\text{ is a injection}\right\} &\text{ and} \\
\end{array}$$
$$\calh(a,b,c)=\left\{(G,\Psi)\in\calg\times\nat^H:\ \cl G\models\Psi(a)\cdot\Psi(b)=\Psi(c)\right\}.$$

It is easy to verify that $\calb$ is closed. Also note that $$\calh(a,b,c)=\bigcup_{n,k,l\in\nat}\left(\begin{array}{l}
    \{(G,\Psi)\in\calg\times\nat^H:\ \cl G\models n\cdot k=l\}\cap  \\
    \{(G,\Psi)\in\calg\times \nat^H:\ \Psi(a)=n,\ \Psi(b)=k,\ \Psi(c)=l\}
\end{array}\right)$$
is open. Hence $\cale'$ is $G_\delta$ and $\cale_H$ is analytic.
\end{proof}

Compare the following theorem to \cite[Proposition 16.7]{KECHRIS}.

\begin{theorem}\label{t.formula_borel}
For every first-order formula $\varphi$ of $L$ and every evaluation $e:\{x_1,x_2,\dots\}\to\nat$ of the variables of $L$ the set $\cals_{\varphi[e]}$ is Borel in $\calg$. Thus for sentences, which define isomorphism-invariant sets, it is either meager or comeager.
\end{theorem}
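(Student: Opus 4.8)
The plan is to prove this by induction on the structure of the formula $\varphi$, showing that $\cals_{\varphi[e]}$ is Borel for every evaluation $e$ simultaneously. The key point is that the quantifiers $\exists$ and $\forall$ range over the \emph{countable} set $\nat$, so they translate into countable unions and intersections of Borel sets, which remain Borel. Since all free and bound variables are evaluated (quantifiers rebind the relevant variable), treating the statement for all evaluations $e$ at once is what makes the induction go through cleanly.

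First I would establish the base case. An atomic formula in $L$ has the form $t_1 = t_2$, where $t_1, t_2$ are terms built from the variables, the constant $1$, and the binary operation $m$. Under a fixed evaluation $e$ each term $t_i$ becomes a word $W_i$ in fixed elements $a_1 = e(x_1), \dots \in \nat$, so the atomic formula $t_1 = t_2$ evaluated at $e$ asserts that $W_1(a_1, \dots) = W_2(a_1, \dots)$ in $\cl G$. By Proposition~\ref{p.basis2} (equivalently Remark~\ref{r.inverse}, handling the evaluation of words to concrete natural numbers via a countable union over the possible values), the set $\cals_{t_1 = t_2\,[e]}$ is clopen, hence Borel.

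Next I would handle the inductive step for the connectives and quantifiers. For negation, $\cals_{(\lnot\varphi)[e]} = \calg \setminus \cals_{\varphi[e]}$, which is Borel since the Borel sets form a $\sigma$-algebra. For conjunction, $\cals_{(\varphi_1 \land \varphi_2)[e]} = \cals_{\varphi_1[e]} \cap \cals_{\varphi_2[e]}$, again Borel. For the existential quantifier, the crucial observation is
\begin{equation}
\cals_{(\exists x\,\varphi)[e]} = \bigcup_{n \in \nat} \cals_{\varphi[e^{x \mapsto n}]},
\end{equation}
where $e^{x \mapsto n}$ denotes the evaluation that agrees with $e$ except that it sends $x$ to $n$. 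This holds directly by the definition of truth for $\exists x\,\varphi$: the formula holds under $e$ precisely when some element $n \in \nat$ witnesses it. Each set in the union is Borel by the inductive hypothesis (applied to the shorter formula $\varphi$ and the evaluation $e^{x \mapsto n}$), and a countable union of Borel sets is Borel. The universal quantifier is handled dually (or via $\forall x = \lnot \exists x \lnot$) as a countable intersection.

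The induction then yields that $\cals_{\varphi[e]}$ is Borel for every first-order formula $\varphi$ and every evaluation $e$. In particular, when $\varphi$ is a sentence, $\cals_\varphi$ is an isomorphism-invariant Borel set, hence has the Baire property, so Theorem~\ref{t.0-1} applies to give that it is either meager or comeager. The main point to be careful about is not any single step but rather setting up the induction over \emph{all} evaluations at once so that the quantifier step can invoke the hypothesis at the modified evaluations $e^{x \mapsto n}$; each individual step (base case via Proposition~\ref{p.basis2}, the $\sigma$-algebra closure properties for the connectives, and the countability of $\nat$ for the quantifiers) is routine once this framing is in place.
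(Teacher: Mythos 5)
Your proof is correct and follows essentially the same route as the paper: induction on formula complexity, with atomic formulas handled as clopen sets via Proposition~\ref{p.basis2}, negation and conjunction via the $\sigma$-algebra structure, and the existential quantifier as a countable union over the modified evaluations $e(x_i/a)$, before invoking Theorem~\ref{t.0-1}. The emphasis on carrying all evaluations through the induction is exactly the (implicit) framing of the paper's argument.
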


\begin{proof}
We proceed by induction on the complexity of $\varphi$.  Fix any evaluation $e$ of the variables $x_1,x_2,\dots$.

\textbf{Case 1.} The formula $\varphi$ is atomic. Then $\varphi[e]$ is of the form $a_1\cdot\ldots\cdot a_n=b_1\cdot\ldots\cdot b_m$ where $a_i,b_j\in\nat$ for all $1\leq i\leq n$ and $1\leq j\leq m$. Hence $\cals_{\varphi[e]}$ is clopen by Proposition~\ref{p.basis2}.

\textbf{Case 2.} The formula $\varphi$ is of the form $\lnot\psi$. Then $\cals_{\varphi[e]}$ is the complement of $\cals_{\psi[e]}$, hence it is Borel.

\textbf{Case 3.} The formula $\varphi$ is of the form $\psi_1\land\psi_2$. Then $\cals_{\varphi[e]}=\cals_{\psi_1[e]}\cap\cals_{\psi_2[e]}$ is Borel.

\textbf{Case 4.} The formula $\varphi$ is of the form $\exists x_i\ \psi(x_i)$. Then $\cl G\models \varphi[e]$  if and only if for some $a\in\nat$ we have $\cl G\models\psi[e(x_i/a)]$, where $e(x_i/a)$ is the evaluation that maps $x_i$ to $a$ and coincides with $e$ on other variables. Thus
$$\cals_{\varphi[e]}=\bigcup_{a\in\nat}\cals_{\psi[e(x_i/a)]}$$
is Borel.
\end{proof}

\begin{cor}\label{c.elem_equiv}
There is a comeager elementary equivalence class in $\calg$. That is, there exists a comeager set $\cals\subseteq\calg$ such that for every $G\in\cals$ and $H\in\calg$ the groups $\cl G$ and $\cl H$ are elementarily equivalent (as $L$-structures) if and only if $H\in\cals$ .
\end{cor}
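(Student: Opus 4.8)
The plan is to combine Theorem~\ref{t.formula_borel} with the fact that, since $L$ is countable, there are only countably many first-order $L$-sentences. For a sentence $\varphi$ the set $\cals_{\varphi[e]}$ does not depend on the evaluation $e$ (there are no free variables), so write $\cals_\varphi$ for it. By Theorem~\ref{t.formula_borel} each $\cals_\varphi$ is either meager or comeager; since $\calg$ is a nonempty Baire space, $\cals_\varphi$ and its complement $\calg\setminus\cals_\varphi$ cannot both be meager, so \emph{exactly one} of them is comeager. Denote the comeager one by $\cals_\varphi^*$.

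Next I would set
$$\cals\defeq\bigcap_{\varphi}\cals_\varphi^*,$$
where $\varphi$ ranges over all $L$-sentences. As this is a countable intersection of comeager sets, $\cals$ is comeager (its complement is a countable union of meager sets). It remains to verify that $\cals$ is exactly one elementary equivalence class. For any $G,G'\in\cals$ and any sentence $\varphi$, both lie in $\cals_\varphi^*$, so either both satisfy $\varphi$ (when $\cals_\varphi^*=\cals_\varphi$) or neither does (when $\cals_\varphi^*=\calg\setminus\cals_\varphi$); hence $\cl G$ and $\cl{G'}$ agree on every sentence and are elementarily equivalent. Conversely, suppose $G\in\cals$ and $H\in\calg$ with $\cl H$ elementarily equivalent to $\cl G$. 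For each $\varphi$, $\cl H\models\varphi$ iff $\cl G\models\varphi$; since $G\in\cals_\varphi^*$, a short case distinction (according to whether $\cals_\varphi^*=\cals_\varphi$ or $\cals_\varphi^*=\calg\setminus\cals_\varphi$) gives $H\in\cals_\varphi^*$ as well. Therefore $H\in\cals$, which establishes the required ``if and only if''.

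There is no serious obstacle here; the entire content is carried by the 0-1 dichotomy of Theorem~\ref{t.formula_borel}. The only points needing care are that the collection of sentences is countable, so that the intersection defining $\cals$ remains comeager, and the elementary-equivalence-class characterization, which follows from the case distinction above.
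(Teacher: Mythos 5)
Your proposal is correct and is essentially the paper's own argument: the paper takes $\Gamma$ to be the set of sentences $\varphi$ with $\cals_\varphi$ comeager and intersects $\cals_\varphi$ over $\varphi\in\Gamma$, which is the same set as your $\bigcap_\varphi\cals_\varphi^*$ since $\cals_{\lnot\varphi}=\calg\setminus\cals_\varphi$. The key points you identify (countability of the set of sentences, the exact dichotomy via the Baire category theorem, and the two-way verification of the equivalence class) match the paper's proof.
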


\begin{proof}
Let $\Gamma$ be the set of sentences $\varphi$ of $L$ such that $\cals_\varphi$ is comeager. Then
$$\cals\defeq\bigcap_{\varphi\in\Gamma}\cals_\varphi$$
is comeager. By Theorem~\ref{t.formula_borel} for every sentence $\varphi$ of $L$ exactly one of $\varphi$ and $\lnot\varphi$ is in $\Gamma$. Thus for any $G\in\cals$ and $H\in\calg$ the groups $\cl G$ and $\cl H$ are elementarily equivalent if and only if $H\in\cals$.
\end{proof}

\begin{remark}\label{r.general_0-1}
\textbf{(Model-theoretic generalization)} It is straightfoward to generalize each of Theorem~\ref{t.isom0-1}, Theorem~\ref{t.embed0-1}, Theorem~\ref{t.formula_borel} and Corollary~\ref{c.elem_equiv}.
\end{remark}

\section{Algebraically closed groups}\label{s.alg_closed}

We wish to study isomorphism and embeddability more closely. For this purpose we need algebraically closed groups. 

\begin{theorem}\label{t.alg_closed}
The set $\calc\defeq\{G\in\calg:\ \cl G\text{ is algebraically closed}\}$ is comeager in $\calg$.
\end{theorem}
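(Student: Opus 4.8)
The plan is to use the standard route to comeagerness supplied by Proposition~\ref{p.comeager}: it suffices to show that $\calc$ is dense and $G_\delta$ in $\calg$. Density is immediate from the tools already in place. Since algebraic closedness is an isomorphism invariant, $\calc$ is a group property with $\calc\subseteq\calg$, and Scott's theorem (Theorem~\ref{t.emb_alg_closed}) says that every countable group embeds into a countable algebraically closed group, i.e. into a member of $\calc$. Hence Lemma~\ref{l.dense}, applied with $\cals=\calg$ and $\calp=\calc$, gives at once that $\calc$ is dense in $\calg$.

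The substantive part is to show that $\calc$ is $G_\delta$. Unwinding Definition~\ref{d.alg_closed}, a group $\cl G$ is algebraically closed precisely when, for every finite system $(E,I)$ of equations and inequations with parameters in $\nat$, consistency of $(E,I)$ with $\cl G$ implies solvability of $(E,I)$ in $\cl G$. There are only countably many such systems, so I would write
$$\calc=\bigcap_{(E,I)}\Big(\neg\mathrm{Cons}(E,I)\ \cup\ \mathrm{Sol}(E,I)\Big),$$
where $\mathrm{Cons}(E,I)=\{G:\ (E,I)\text{ is consistent with }\cl G\}$ and $\mathrm{Sol}(E,I)=\{G:\ (E,I)\text{ has a solution in }\cl G\}$. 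The set $\mathrm{Sol}(E,I)$ is a union over assignments of the variables of basic clopen sets of the form in Proposition~\ref{p.basis2}, hence open. Thus everything reduces to controlling $\mathrm{Cons}(E,I)$.

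For this I would first localize consistency. The parameters of $(E,I)$ lie in the finitely generated subgroup $G_0=\langle a_1,\dots,a_r\rangle\le\cl G$, and an amalgamated free product argument shows that $(E,I)$ is consistent with $\cl G$ if and only if it is consistent with $G_0$: from any overgroup $K\ge G_0$ carrying a solution, the amalgam $\cl G\ast_{G_0}K$ is an overgroup of $\cl G$ carrying the same solution, and the converse is trivial. Consequently $\mathrm{Cons}(E,I)$ depends only on the isomorphism type of the marked group $(G_0;a_1,\dots,a_r)$. The useful observation is that ``$\langle a_1,\dots,a_r\rangle\cong B$ via $a_i\leftrightarrow b_i$'' is a \emph{closed} condition on $G$: it equals $\bigcap_{u}\{G:\ u(a_1,\dots,a_r)=1\iff u(b_1,\dots,b_r)=1\}$, an intersection over all words $u$ in $r$ letters of clopen sets, since each ``$u(b_1,\dots,b_r)=1$'' is a fixed truth value computed in the fixed group $B$. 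Equivalently, $G\mapsto\ker(F_r\to\langle\bar a\rangle_{\cl G})$ is a continuous map from $\calg$ into the space of normal subgroups of the free group $F_r$, and $\mathrm{Cons}(E,I)$ is the preimage of the set of kernels whose quotient admits a solution-bearing overgroup.

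The hard part will be turning this into a genuine $G_\delta$ description, and this is where I expect the main obstacle. Expressing $\mathrm{Cons}(E,I)$ as the union of the closed ``matching'' sets over all finitely generated witness groups $K$ carrying a solution exhibits it only as an \emph{analytic} set, because the witnesses range over the continuum-sized family of all finitely generated groups: one cannot restrict to finitely presented witnesses, since by the Higman embedding theorem only recursively presented parameter subgroups embed into finitely presented groups, whereas an arbitrary $G\in\calg$ may have a parameter subgroup with unsolvable word problem. A priori this only shows $\calc$ is coanalytic, hence has the Baire property, so the 0-1 law (Theorem~\ref{t.0-1}) applies and $\calc$ is meager or comeager; combined with density this is not yet conclusive. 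To close the gap I would either sharpen the bookkeeping so that the matching union collapses to a countable one, obtaining a true $G_\delta$ and invoking Proposition~\ref{p.comeager}, or else bypass the complexity computation and prove comeagerness directly via the Banach--Mazur game: player II builds the multiplication table of $\cl G$ in stages, at each stage using the amalgamation step above to solve the next consistent system over the finitely many elements committed so far while also defining the product of the next pair of elements, so that the limit group is algebraically closed. Since a set is comeager exactly when II has a winning strategy in the Banach--Mazur game, this yields the theorem and connects with the game-theoretic viewpoint of Section~\ref{s.games}.
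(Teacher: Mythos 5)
Your first route (show $\calc$ is dense and $G_\delta$, then apply Proposition~\ref{p.comeager}) is not the paper's route, and you correctly diagnose why it stalls: the consistency set is only visibly analytic, so you only get that $\calc$ has the BP, and density plus the 0-1 law cannot rule out that $\calc$ is dense and meager. The paper sidesteps this entirely by proving something weaker than openness of each piece: writing $\calc=\bigcap_{(E,I)}\calc(E,I)$ over the countably many finite systems, it shows that each $\calc(E,I)$ \emph{contains a dense open set}, which already suffices for comeagerness and requires no complexity computation for the consistency predicate. The engine is a dichotomy for each nonempty basic clopen $\calb$: either (Case 1) some $H\in\calb$ already solves $(E,I)$, in which case the clopen set of Proposition~\ref{p.basis2} pinning down that particular solution is a nonempty clopen subset of $\calb\cap\calc(E,I)$; or (Case 2) no $H\in\calb$ solves $(E,I)$, and then $(E,I)$ is in fact inconsistent with $\cl G$ for \emph{every} $G\in\calb$ --- because a solution in a countable overgroup of some $\cl H$, $H\in\calb$, could be relabelled by a bijection of $\nat$ fixing $\supp\calb$ and the parameters to produce a $K\in\calb$ with a solution, contradicting the case hypothesis --- so all of $\calb$ lies in $\calc(E,I)$.

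Your fallback via the Banach--Mazur game is sound and is essentially this same argument in game clothing (indeed Section~\ref{s.games} makes the translation explicit), and your amalgamated-product localization of consistency to the parameter subgroup is a correct, slightly stronger substitute for the paper's relabelling step. The one point your sketch leaves underspecified is precisely the dichotomy above: ``solve the next consistent system'' begs the question of consistent \emph{with what}, since at a finite stage the final group, and hence the parameter subgroup $G_0$, is not yet determined. What makes the strategy legal is that player II only needs to act when some group in the current basic clopen set carries an actual solution (then force it by a clopen refinement), and may pass otherwise, because Case 2 guarantees that passing still lands every possible outcome in $\calc(E,I)$. Once you state that dichotomy, your proof closes and coincides with the paper's.
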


\begin{proof}
Let $F$ be the free group generated by $X=\{x_1,x_2\dots\}$. Let $G\in\calg$. By definition, $\cl G$ is algebraically closed if and only if the following holds. For every finite system $(E,I)$ of equations and inequations with elements from $F\star\cl G$ either $(E,I)$ is inconsistent with $\cl G$ or it has a solution in $\cl G$ (see Subsection~\ref{ss.algebra} for the definitions). Note that elements of $F\star \cl G$ are words with letters from $X$ and $\nat$. Clearly, there are countably many such words. Thus it suffices to prove that for any finite system $(E,I)$ the set
$$\calc(E,I)\defeq\{G\in\calg:\ (E,I)\text{ is inconsistent with } \cl G\text{ or $(E,I)$ has a solution in } \cl G\}$$
contains a dense open set. Fix

\begin{tabular}{lll}
$E=\{U_1(x_1,\dots,x_n),\dots,U_k(x_1,\dots,x_n)\}$ & \text{ and } & $I=\{V_1(x_1,\dots,x_n),\dots,V_l(x_1,\dots,x_n)\}$ \\
\end{tabular}

\noindent where $x_1,\dots,x_n$ are the variables occuring in some element of $E$ or $I$.

Fix a nonempty basic clopen set $\calb=\{G\in\calg:\ \forall i,j\leq k\ (i\cdot j=m_{i,j})\}$ with $k\in\nat$ and $m_{i,j}\in\nat$ for all $i,j\leq k$. We need to show that $\calb\cap\calc(E,I)$ contains a nonempty clopen set. There are two cases.

\textbf{Case 1.} There exists $H\in\calb$ such that $(E,I)$ has a solution in $\cl H$. That is, for some natural numbers $a_1,\dots,a_n$ we have $\bigwedge_{i=1}^k U_i(a_1,\dots,a_n)=1$ and $\bigwedge_{j=1}^l V_j(a_1,\dots,a_n)\neq 1$ in $\cl H$. Then
$$\calu\defeq\left\{G\in\calg:\ \bigwedge_{i=1}^k U_i(a_1,\dots,a_n)=1\land \bigwedge_{j=1}^l V_j(a_1,\dots,a_n)\neq 1\right\}$$
is clopen in $\calg$ by Proposition~\ref{p.basis2}. Now $\calu\subseteq\calc(E,I)$ and $H\in\calu\cap\calb$, which completes Case 1.

\textbf{Case 2.} For every $G\in\calb$ the finite system $(E,I)$ is unsolvable in $\cl G$. It suffices to prove that for every $G\in\calb$ the finite system $(E,I)$ is inconsistent with $\cl G$.

Suppose that there is some $H\in\calb$ and a group $\wtilde K\geq \cl H$ such that $(E,I)$ has a solution $a_1,\dots,a_n$ in $\wtilde K$. Clearly, we may assume that $\wtilde K$ is countable since otherwise we could replace it by one of its countably generated subgroups. Let $M$ be the set of natural numbers occuring in elements of $E$ or $I$. We choose any bijection $\varphi:\wtilde K\to \nat$ that extends the identity of the finite set $M\cup\supp\calb$ (recall Notation~\ref{n.supp}). We define the multiplication table $K$ on $\nat$ by pushing forward the structure of $\wtilde K$ via $\varphi$. Then $\cl K\in\calb$ and $\varphi(a_1),\dots,\varphi(a_n)$ is a solution of $(E,I)$ in $\cl K$, a contradiction.
\end{proof}

\begin{remark}
\textbf{(Model-theoretic generalization)} Theorem~\ref{t.alg_closed} can be generalized without any difficulty or additional assumption. As we have mentioned in Subsection~\ref{ss.logic}, the natural generalization of algebraic closedness is existential closedness. We remark that Pouzet and Roux \cite{POUZET} proved the genericity of existential closedness among countably infinite models of a fixed universal theory.
\end{remark}

\begin{cor}
The generic countably infinite group is simple, not finitely generated and not locally finite. That is, the set $\{G\in\calg:\ \cl G\text{ is simple, not fin. gen. and not loc. fin.}\}$ is comeager in $\calg$.
\end{cor}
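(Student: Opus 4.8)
The plan is to derive this corollary directly from Theorem~\ref{t.alg_closed}, which already establishes that the set $\calc=\{G\in\calg:\ \cl G\text{ is algebraically closed}\}$ is comeager in $\calg$. The only additional ingredient needed is that every algebraically closed group enjoys all three of the desired properties, and this is precisely the content of Remark~\ref{r.prop_of_ac}: algebraically closed groups are simple (Neumann, \cite{NEUMANN1}), not finitely generated (Neumann, \cite{NEUMANN3}), and, since they contain free subgroups by Theorem~\ref{t.alg_c_word_p}, not locally finite.

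First I would observe that these facts yield the inclusion
$$\calc\subseteq\{G\in\calg:\ \cl G\text{ is simple, not fin. gen. and not loc. fin.}\}.$$
Indeed, if $\cl G$ is algebraically closed, then by Remark~\ref{r.prop_of_ac} it is simultaneously simple, not finitely generated, and not locally finite, so $G$ lies in the set on the right-hand side.

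Next I would invoke the elementary fact that any superset of a comeager set is comeager: since the complement of a superset is contained in the complement of the original set, it is meager. Applying this to the inclusion above, together with the comeagerness of $\calc$ from Theorem~\ref{t.alg_closed}, immediately gives that $\{G\in\calg:\ \cl G\text{ is simple, not fin. gen. and not loc. fin.}\}$ is comeager in $\calg$, which is the claim.

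There is essentially no hard step remaining here; all the substantive work has already been carried out in establishing the genericity of algebraic closedness (Theorem~\ref{t.alg_closed}) and in collecting the structural properties of algebraically closed groups (Remark~\ref{r.prop_of_ac}, resting on Theorem~\ref{t.alg_c_word_p}). The corollary is a clean packaging of those two results via a one-line containment argument, so the proof is short and the only thing to verify carefully is that each of the three named properties is indeed isomorphism-invariant and that the intersection of the three corresponding comeager group properties is again comeager—both of which are routine.
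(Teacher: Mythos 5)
Your proposal is correct and follows exactly the paper's own argument: the paper derives the corollary immediately from Theorem~\ref{t.alg_closed} together with Remark~\ref{r.prop_of_ac}, via precisely the containment and superset-of-comeager observation you spell out.
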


\begin{proof}
It follows immediately from Theorem~\ref{t.alg_closed} and Remark~\ref{r.prop_of_ac}.
\end{proof}

We present a lemma that will prove to be useful.

\begin{lemma}\label{l.sol_in_all_alg_c}
Let $\Phi$ be an existential sentence of $L$ (the language of group theory). If $\Phi$ holds in some group, then $\Phi$ holds in every algebraically closed group.
\end{lemma}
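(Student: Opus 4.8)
The plan is to reduce the statement to the defining property of algebraic closedness via the correspondence between existential sentences and finite systems of equations and inequations. Write $\Phi$ as $\exists x_1\cdots\exists x_n\,\varphi(x_1,\dots,x_n)$ with $\varphi$ quantifier-free. By hypothesis there is a group $G_0$ and a tuple $\bar a=(a_1,\dots,a_n)\in G_0^n$ with $G_0\models\varphi[\bar a]$. Since every quantifier-free formula is a Boolean combination of equations of words, I would put $\varphi$ into disjunctive normal form; as $G_0\models\varphi[\bar a]$, at least one disjunct holds at $\bar a$. That disjunct is precisely a finite system $(E,I)$ of equations $E=\{U_1,\dots,U_k\}$ and inequations $I=\{V_1,\dots,V_l\}$ in the variables $x_1,\dots,x_n$, and $\bar a$ is a solution of $(E,I)$ in $G_0$. (Each atomic formula $s=t$ is encoded by the single word $st^{-1}\in F$, using inverses of the free group exactly as in the proof of Proposition~\ref{p.alg_closed_strongly_hom}.) Crucially, because $\Phi$ is a sentence, the words $U_i,V_j$ lie in $F$ alone: the system carries no parameters.

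Next, let $A$ be an arbitrary algebraically closed group; I must produce a solution of $(E,I)$ in $A$. To invoke Definition~\ref{d.alg_closed} I first check that $(E,I)$ is consistent with $A$, i.e. that it has a solution in some overgroup of $A$. For this I would form $H\defeq A\times G_0$ (the direct product, which already served as the joint-embedding witness in Remark~\ref{r.gen_0-1_JEP}). Then $A$ embeds into $H$ via $a\mapsto(a,1)$, and the image of $\bar a$ under the embedding $g\mapsto(1,g)$ of $G_0$ still solves $(E,I)$: each equation $U_i(\bar a)=1$ persists in the extension, and each inequation $V_j(\bar a)\neq 1$ persists because $G_0$ is a subgroup of $H$, so that $U_i\bigl((1,\bar a)\bigr)=(1,1)=1_H$ while $V_j\bigl((1,\bar a)\bigr)=(1,V_j(\bar a))\neq 1_H$. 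Because the system has no parameters, the pushforward $\wtilde h$ appearing in the definition of consistency acts trivially on $E$ and $I$, so this literally witnesses that $(E,I)$ is consistent with $A$.

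Finally, since $A$ is algebraically closed, the consistent finite system $(E,I)$ has a solution $\bar b=(b_1,\dots,b_n)$ in $A$. Then the chosen disjunct of $\varphi$ holds at $\bar b$, so $A\models\varphi[\bar b]$ and hence $A\models\Phi$. As $A$ was an arbitrary algebraically closed group, this proves the lemma.

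The steps are individually routine; the only point demanding care is the bookkeeping in the middle paragraph, namely recognizing that a parameter-free disjunct is exactly a finite system $(E,I)$ in the paper's sense, and that its solvability in $G_0$ yields consistency with $A$ once one passes to a common overgroup and observes that the pushforward in the consistency definition is the identity in the absence of parameters. I expect no genuine obstacle beyond matching the syntactic definition of an existential sentence to the equation/inequation formalism of Subsection~\ref{ss.algebra}.
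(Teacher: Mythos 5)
Your proof is correct and follows essentially the same route as the paper's: the paper also reduces to showing $A\times G_0\models\Phi$ (existential sentences persist upward from the factor $\{1\}\times G_0\cong G_0$) and then invokes the definition of algebraic closedness. The only difference is that you spell out the syntactic translation between a parameter-free existential sentence and a finite system $(E,I)$, which the paper leaves implicit; this bookkeeping is accurate.
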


\begin{proof}
Let $H$ be a group such that $H\models\Phi$. By the definition of algebraically closed groups it suffices to show that for any algebraically closed group $G$ we have $G\times H\models\Phi$. Since $\{1\}\times H$ and $H$ are isomorphic, $\{1\}\times H\models \Phi$. Hence $G\times H\models \Phi$ because $\Phi$ is existential.
\end{proof}

\begin{theorem}\label{t.alg_isom_dense}
The isomorphism class of any countably infinite algebraically closed group is dense in $\calg$.
\end{theorem}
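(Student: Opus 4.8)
The plan is to verify density by showing that $\cali_A$ meets every nonempty basic clopen set, where $A$ is the fixed countably infinite algebraically closed group. Fix such a set $\calb=\{G\in\calg:\ \forall i,j\leq k\ (i\cdot j=m_{i,j})\}$; we may assume $\calb\cap\calg\neq\emptyset$, otherwise $\calb$ is empty as a subspace and there is nothing to do. Pick some $G_0\in\calb$ and set $S\defeq\supp\calb\subseteq\nat$, the finite set carrying the prescribed partial multiplication $i\cdot j=m_{i,j}$ (for $i,j\leq k$). Note that one cannot simply invoke Lemma~\ref{l.dense} here: by Theorem~\ref{t.alg_c_word_p} not every countable group embeds into $A$, since a finitely generated group with unsolvable word problem may fail to. The idea is that a basic clopen set constrains only finitely much of the table, so it suffices to realize this one finite configuration inside a relabelling of $A$, rather than an entire subgroup.

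The key step is to translate ``realize the partial table on $S$ inside $A$'' into a finite system of equations and inequations and then apply algebraic closedness. I would introduce one variable $x_s$ for each $s\in S$, let $E$ consist of the equations $x_i x_j=x_{m_{i,j}}$ for all $i,j\leq k$ (the constraint $x_1=1_A$ is then automatic, being forced by $x_1 x_j=x_j$), and let $I$ consist of the inequations $x_p x_q^{-1}\neq 1$ for distinct $p,q\in S$. The assignment $x_s\mapsto s$ is a solution of $(E,I)$ in $\cl{G_0}$, since $S\subseteq\nat$ with the multiplication of $\cl{G_0}$ satisfies exactly these equations and the elements of $S$ are distinct.

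Next I would check that $(E,I)$ is consistent with $A$, and then conclude. Embedding $A$ into $A\times\cl{G_0}$ via $a\mapsto(a,1)$, the assignment $x_s\mapsto(1,s)$ solves the image system: the equations hold coordinatewise because $s_i s_j=m_{i,j}$ in $\cl{G_0}$, and the inequations hold because the chosen elements are distinct. This is precisely the direct-product trick already used in Lemma~\ref{l.sol_in_all_alg_c}. Hence $(E,I)$ is consistent with $A$, and since $A$ is algebraically closed it has a solution $(a_s)_{s\in S}$ in $A$. The resulting map $\psi_0:S\to A$, $s\mapsto a_s$, is injective (by $I$), sends $1\mapsto 1_A$, and satisfies $\psi_0(i)\psi_0(j)=\psi_0(m_{i,j})$ for $i,j\leq k$ (by $E$). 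I would extend $\psi_0$ to a bijection $\psi:\nat\to A$, which is possible as $\nat\setminus S$ and $A\setminus\psi_0(S)$ are both countably infinite, and define $H\in\calg$ by the pullback $i\cdot_H j\defeq\psi^{-1}\bigl(\psi(i)\cdot_A\psi(j)\bigr)$. Then $\psi$ is an isomorphism $\cl H\to A$, so $H\in\cali_A$, while $H\in\calb$ because $\psi$ agrees with $\psi_0$ on $S$; thus $\cali_A\cap\calb\neq\emptyset$.

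The main obstacle is the consistency verification: even though $\cl{G_0}$ itself need not embed into $A$, one must see that the finite configuration on $S$ nonetheless transfers into an overgroup of $A$ (via $A\times\cl{G_0}$), so that the definition of algebraic closedness can be applied. The remaining steps—reducing density to basic clopen sets, writing down $(E,I)$, and assembling the bijection $\psi$—are routine.
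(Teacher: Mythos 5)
Your proof is correct and follows essentially the same route as the paper: encode the finite partial multiplication table on $\supp\calb$ as a finite existential condition (equations plus distinctness), transfer its solvability to $A$ via the direct-product trick and algebraic closedness, and then relabel by a bijection fixing the solution. The only difference is presentational: the paper packages the transfer step as Lemma~\ref{l.sol_in_all_alg_c} applied to an existential sentence $\Phi$, whereas you work directly with a system $(E,I)$ in the sense of Definition~\ref{d.alg_closed} and inline the $A\times\cl{G_0}$ consistency argument.
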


\begin{proof}
Fix a nonempty basic clopen $\calb=\{G\in\calg:\ \forall i,j\leq k\ (i\cdot j=m_{i,j})\}$ with $k\in\nat$ and $m_{i,j}\in\nat$ for all $i,j\leq k$. Fix any algebraically closed group $A\in\calg$. Pick any $H\in\calb$. We associate variables $x_i$ to $i$ for each $i\leq k$ and $x_{i,j}$ to $(i,j)$ for each $i,j\leq k$. For each $i,j,l\leq k$ let
$$
\varphi_{i,j,l}=
\begin{cases}
x_{i,j}=x_l\qquad\text {if}\quad m_{i,j}=l, \\
x_{i,j}\neq x_l\qquad\text{if}\quad m_{i,j}\neq l,
\end{cases}
$$
and for each $i,j,r,s\leq k$ let
$$
\varphi_{i,j,r,s}=
\begin{cases}
x_{i,j}=x_{r,s}\qquad\text {if}\quad m_{i,j}=m_{r,s}, \\
x_{i,j}\neq x_{r,s}\qquad\text{if}\quad m_{i,j}\neq m_{r,s}.
\end{cases}
$$
Now let $\Phi$ be the existential closure of the following formula:
$$\left(\bigwedge_{i,j\leq k} x_i\cdot x_j=x_{i,j}\right)\land\left(\bigwedge_{\substack{i,j\leq k\\ i\neq j}}x_i\neq x_j\right)\land\left(\bigwedge_{i,j,l\leq k}\varphi_{i,j,l}\right)\land\left(\bigwedge_{i,j,r,s\leq k}\varphi_{i,j,r,s}\right).$$
Clearly $H\models\Phi$. Thus $\cl A\models\Phi$ by Lemma~\ref{l.sol_in_all_alg_c}. That is, there are numbers $a_i, n_{i,j}\in \nat$ such that $\cl A\models a_i\cdot a_j=n_{i,j}$ for each $i,j\leq k$ and two of them equal if and only if the corresponding elements equal in $H$. Note that $a_1=1$ since both $a_1\cdot a_1=n_{1,1}$ and $n_{1,1}=a_1$ hold. Let $\alpha:\nat\to\nat$ be a bijection that extends the finite map $a_i\mapsto i$, $n_{i,j}\mapsto m_{i,j}$ for each $i,j\leq k$. Then $h_\alpha(A)\in\calb$ and $\overline{h_\alpha(A)}$ is isomorphic to $\cl A$ (where $h_\alpha$ is the induced homeomorphism). 
\end{proof}

\begin{remark}
\textbf{(Model-theoretic generalization)} Similarly to Remark~\ref{r.gen_0-1_JEP} it suffices to assume the JEP to generalize Lemma~\ref{l.sol_in_all_alg_c}. Then Theorem~\ref{t.alg_isom_dense} generalizes without further assumptions.
\end{remark}

\section{Embeddability and isomorphism classes}\label{s.embed_isom}

Recall that $\cale_H=\{G\in\calg:\ H\text{ is embeddable into }\cl G\}$.

\begin{defi}\label{d.gen_emb}
A group $H$ is \textbf{generically embeddable} if $\cale_H$ is comeager.
\end{defi}

In this section we characterize generically embeddable groups and we prove that every ismorphism class is meager in $\calg$. 

First we need two lemmas.

\begin{lemma}\label{l.fin_gen_extends}
Let $G$ and $H$ be countable groups such that $H$ is homogeneous. Then $G$ can be embedded into $H$ if and only if every finitely generated subgroup of $G$ can be embedded into $H$.
\end{lemma}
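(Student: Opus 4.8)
The forward direction is immediate: restricting an embedding $G\hookrightarrow H$ to any finitely generated subgroup of $G$ yields an embedding of that subgroup into $H$. The plan for the converse is a one-sided back-and-forth (a ``forth'' argument) that produces an embedding of $G$ as the union of a coherent increasing chain of embeddings of finitely generated subgroups, using homogeneity of $H$ at each step to make the pieces fit together.

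Fix an enumeration $G=\{g_1,g_2,\dots\}$ and set $G_n=\langle g_1,\dots,g_n\rangle$, so that $G_0=\{1\}\subseteq G_1\subseteq\cdots$ is an increasing chain of finitely generated subgroups of $G$ with union $G$. I would construct embeddings $\varphi_n:G_n\to H$ recursively so that $\varphi_{n+1}$ extends $\varphi_n$ for every $n$. Start from the trivial map $\varphi_0$. Assuming $\varphi_n:G_n\to H$ has been defined, the hypothesis provides some embedding $\psi:G_{n+1}\to H$ of the finitely generated group $G_{n+1}$.

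The crux is to adjust $\psi$ so that it agrees with $\varphi_n$ on $G_n$. Both $\varphi_n(G_n)$ and $\psi(G_n)$ are finitely generated subgroups of $H$, and $\psi|_{G_n}\circ\varphi_n^{-1}:\varphi_n(G_n)\to\psi(G_n)$ is an isomorphism between them. Since $H$ is homogeneous, this isomorphism extends to an automorphism $\theta$ of $H$. Then $\varphi_{n+1}\defeq\theta^{-1}\circ\psi:G_{n+1}\to H$ is an embedding, and for $g\in G_n$ we have $\theta(\varphi_n(g))=\psi(g)$, so $\varphi_{n+1}(g)=\theta^{-1}(\psi(g))=\varphi_n(g)$. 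Thus $\varphi_{n+1}$ extends $\varphi_n$, as required. This reconciliation step is the only substantive point in the argument, and it is precisely where the homogeneity of $H$ is used.

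Since the $\varphi_n$ form a coherent chain, $\varphi\defeq\bigcup_{n}\varphi_n$ is a well-defined map $G\to H$. It is a homomorphism and injective, because it is so on each $G_n$ and every element of $G$ lies in some $G_n$. Hence $\varphi$ is the desired embedding, and everything outside the previous paragraph is routine bookkeeping.
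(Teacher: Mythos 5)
Your proof is correct and follows essentially the same route as the paper's: exhaust $G$ by an increasing chain of finitely generated subgroups and use homogeneity of $H$ at each step to reconcile a new embedding with the previously constructed one. The only (cosmetic) difference is that the paper extends the isomorphism $(i_n\circ j_{n+1}^{-1})|_{j_{n+1}(G_n)}$ to an automorphism $\alpha_{n+1}$ and sets $i_{n+1}=\alpha_{n+1}\circ j_{n+1}$, whereas you extend the inverse isomorphism and compose with $\theta^{-1}$, which is the same adjustment read in the opposite direction.
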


\begin{proof}
$\implies$: This is obvious.

$\impliedby$: Write $G$ as a union of finitely generated subgroups $\bigcup_{n=1}^\infty G_n$ such that $G_n\subseteq G_{n+1}$ for every $n$. It suffices to find a sequence $i_n:G_n\embeds H$ of embeddings such that $i_{n+1}\supseteq i_n$ for every $n$ because then $i\defeq\bigcup_{n=1}^\infty i_n$ embeds $G$ into $H$.

Suppose $i_n:G_n\embeds H$ is given. By assumption there is an embedding $j_{n+1}:G_{n+1}\embeds H$ not necessarily extending $i_n$. However, by homogeneity there is an automorphism $\alpha_{n+1}$ of $H$ that extends the isomorphism $(i_n\circ j_{n+1}^{-1})|_{j_{n+1}(G_n)}$. Therefore $i_{n+1}\defeq\alpha_{n+1}\circ j_{n+1}$ is an embedding of $G_{n+1}$ into $H$ that extends $i_n$.
\end{proof}

\begin{cor}\label{c.emb_iff_fin_gen}
A group $H$ is generically embeddable if and only if $H$ is countable and every finitely generated subgroup of $H$ is generically embeddable.
\end{cor}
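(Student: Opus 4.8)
The plan is to prove the two implications separately, using for the hard direction that the generic group is algebraically closed (Theorem~\ref{t.alg_closed}) and hence homogeneous (Proposition~\ref{p.alg_closed_strongly_hom}), together with the extension principle of Lemma~\ref{l.fin_gen_extends}.

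First I would dispose of the forward implication, which in fact needs no finite-generation hypothesis. Suppose $\cale_H$ is comeager. If $H$ were uncountable it could not embed into any countable group, so $\cale_H$ would be empty; hence $H$ is countable. Next, for every subgroup $K\leq H$ any embedding of $H$ into a group $\cl G$ restricts to an embedding of $K$, so $\cale_H\subseteq\cale_K$. Since any superset of a comeager set is comeager, $\cale_K$ is comeager for every subgroup $K$ of $H$, in particular for every finitely generated one.

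For the converse I would argue as follows. Assume $H$ is countable and every finitely generated subgroup of $H$ is generically embeddable. The crucial finiteness observation is that a countable group has only countably many finite subsets, hence only countably many finitely generated subgroups; enumerate them as $K_1,K_2,\dots$. Each $\cale_{K_n}$ is comeager by hypothesis, so the countable intersection $\calc\cap\bigcap_{n}\cale_{K_n}$ is comeager, where $\calc$ is the comeager set of algebraically closed groups. I claim this set is contained in $\cale_H$, which finishes the proof. Indeed, fix a group $G$ in it. Then $\cl G$ is countably infinite and algebraically closed, hence homogeneous by Proposition~\ref{p.alg_closed_strongly_hom}, and every finitely generated subgroup $K_n$ of $H$ embeds into $\cl G$ because $G\in\cale_{K_n}$. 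Applying Lemma~\ref{l.fin_gen_extends} with $H$ in the role of its ``$G$'' and the homogeneous countable group $\cl G$ in the role of its ``$H$'' produces an embedding of $H$ into $\cl G$, i.e.\ $G\in\cale_H$.

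The main obstacle is conceptual rather than computational: one has to notice that homogeneity of the target is precisely what upgrades ``every finitely generated subgroup embeds'' to ``the whole group embeds'', and that this homogeneity is available on a comeager set only because the generic group is algebraically closed. Beyond this, the sole quantitative input is the countability of the family of finitely generated subgroups, which guarantees that the relevant intersection stays comeager; the remaining steps are routine invocations of the cited results.
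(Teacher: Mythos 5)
Your proof is correct and follows essentially the same route as the paper: intersect the comeager sets $\cale_{K_n}$ with the comeager set of homogeneous (equivalently, algebraically closed) groups and apply Lemma~\ref{l.fin_gen_extends}. The only difference is cosmetic --- you work with the set of algebraically closed groups and invoke Proposition~\ref{p.alg_closed_strongly_hom} pointwise, while the paper names the set of homogeneous groups directly; your forward direction is also spelled out in more detail than the paper's ``This is clear.''
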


\begin{proof}
$\implies$: This is clear.

$\impliedby$: Let $H_1,H_2,\dots$ be an enumeration of the finitely generated subgroups of $H$. Let $\calh=\{G\in\calg:\ \cl G\text{ is homogeneous}\}$. By Theorem~\ref{t.alg_closed} and Proposition~\ref{p.alg_closed_strongly_hom} the set $\calh$ is comeager. Since $\cale_{H_n}$ is comeager for each $n$, $(\bigcap_{n=1}^\infty\cale_{H_n})\cap\calh$ is also comeager. By Lemma~\ref{l.fin_gen_extends} we have $(\bigcap_{n=1}^\infty\cale_{H_n})\cap\calh\subseteq\cale_H$. Thus $H$ is generically embeddable.
\end{proof}

\begin{lemma}\label{l.F_sigma}
For any finitely generated group $H$ the set $\cale_H$ is $F_\sigma$.
\end{lemma}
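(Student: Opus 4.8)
The plan is to exploit finite generation to reduce the embeddability of $H$ into $\cl G$ to the existence of suitable images of finitely many generators, and then to recognise this as a countable intersection of clopen conditions once the images are fixed. Fix a finite generating set $h_1,\dots,h_m$ of $H$. For a tuple $(g_1,\dots,g_m)\in\nat^m$ I claim that the assignment $h_i\mapsto g_i$ extends to an embedding $H\embeds\cl G$ if and only if, for every word $W$ in $m$ variables,
$$W(h_1,\dots,h_m)=1\text{ in }H\iff W(g_1,\dots,g_m)=1\text{ in }\cl G.$$
The left-to-right implication, taken over all $W$, says precisely that $h_i\mapsto g_i$ respects every relation of $H$, i.e. extends to a homomorphism; the right-to-left implication says that $W(g_1,\dots,g_m)\neq 1$ whenever $W(h_1,\dots,h_m)\neq 1$, which is exactly the injectivity (triviality of the kernel) of that homomorphism.

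Next I would fix the tuple $(g_1,\dots,g_m)$ and inspect the displayed condition one word at a time. For each fixed $W$ the truth value of $W(h_1,\dots,h_m)=1$ in $H$ is a datum about $H$ alone, independent of $G$; hence the constraint contributed by $W$ is either $\{G\in\calg:\ W(g_1,\dots,g_m)=1\}$ or its complement $\{G\in\calg:\ W(g_1,\dots,g_m)\neq 1\}$, and both are clopen by Proposition~\ref{p.basis2}. Because there are only countably many words in $m$ variables, the set
$$\calc_{(g_1,\dots,g_m)}\defeq\{G\in\calg:\ h_i\mapsto g_i\text{ extends to an embedding }H\embeds\cl G\}$$
is a countable intersection of clopen sets, hence closed.

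Finally, $H$ embeds into $\cl G$ if and only if some choice of generator images works, so
$$\cale_H=\bigcup_{(g_1,\dots,g_m)\in\nat^m}\calc_{(g_1,\dots,g_m)}.$$
Since $\nat^m$ is countable, this exhibits $\cale_H$ as a countable union of closed sets, i.e. as $F_\sigma$. I expect no serious obstacle; the only point requiring care is that one must encode injectivity and not merely the homomorphism property, which the biconditional over all words accomplishes. It is precisely finite generation that upgrades the conclusion from the analyticity of Theorem~\ref{t.embed0-1} to $F_\sigma$: a single existential quantifier over the countable set $\nat^m$ of generator images suffices, so no projection along the uncountable space of injections $H\to\nat$ is needed.
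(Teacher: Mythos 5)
Your proposal is correct and follows essentially the same route as the paper: reduce embeddability to the existence of a tuple of generator images satisfying, for every word $W$, the biconditional $W(h_1,\dots,h_m)=1$ in $H$ $\iff$ $W(g_1,\dots,g_m)=1$ in $\cl G$, note that each such condition is clopen by Proposition~\ref{p.basis2}, and take a countable union over tuples of the resulting closed sets. The paper phrases the injectivity step via equality of presentations of $H$ and $\langle b_1,\dots,b_n\rangle_{\cl G}$, but this is the same observation as your kernel-triviality argument.
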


\begin{proof}
Fix a finite generating set $\{a_1,\dots,a_n\}$ for $H$. Let $G\in\calg$ be arbitrary.

\textbf{Claim.} The group $H$ is embeddable into $\cl G$ if and only if there are $b_1,\dots,b_n\in\nat$ such that for every word $W$ in $n$ variables $W(a_1,\dots,a_n)=1$ in $H$ $\iff$ $W(b_1,\dots,b_n)=1$ in $\cl G$.

If there is an embedding $i:H\embeds \cl G$, then clearly for every word $W$ in $n$ variables $W(a_1,\dots,a_n)$ maps to $W(i(a_1),\dots,i(a_n))$, therefore $b_j=i(a_j)$ for each $1\leq j\leq n$ is suitable. On the other hand, if for some $b_1,\dots,b_n\in \cl G$ we have $W(a_1,\dots,a_n)=1$ in $H$ $\iff$ $W(b_1,\dots,b_n)=1$ in $\cl G$ for every word $W$ in $n$ variables, then $H$ and $\langle b_1,\dots,b_n\rangle_{\cl G}$ have the same presentation, hence they are isomorphic. This proves the claim.

Thus we may write $\cale_H$ as
$$\bigcup_{b_1,\dots,b_n\in\nat}\ \bigcap_{\substack{W\text{ is a word}\\ \text{in }n\text{ variables}}}\underbrace{\{G\in\calg:\ W(a_1,\dots,a_n)=1\text{ in }H \iff W(b_1,\dots,b_n)=1\text{ in }\cl G\}}_{\text{clopen by Proposition }\ref{p.basis2}},$$
which proves the lemma.
\end{proof}

Now we turn to the characterization.

\begin{theorem}\label{t.embed}
The following are equivalent:

(1) The group $H$ is generically embeddable.

(2) The group $H$ can be embedded into every algebraically closed group.

(3) The group $H$ is countable and every finitely generated subgroup of $H$ has solvable word problem.
\end{theorem}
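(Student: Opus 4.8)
The plan is to establish the equivalence via the cycle $(2)\Leftrightarrow(3)\Rightarrow(1)\Rightarrow(2)$, leaning on the tools already assembled. The equivalence of (2) and (3) is essentially Theorem~\ref{t.alg_c_word_p} combined with a reduction to finitely generated subgroups. Indeed, Theorem~\ref{t.alg_c_word_p} states precisely that a \emph{finitely generated} group has solvable word problem if and only if it embeds into every algebraically closed group, so first I would reduce embeddability of a general $H$ to embeddability of its finitely generated subgroups. For the forward direction of $(2)\Rightarrow(3)$, note that if $H$ embeds into every algebraically closed group $A$, then so does each finitely generated subgroup $H'\leq H$, whence $H'$ has solvable word problem by Theorem~\ref{t.alg_c_word_p}; countability of $H$ is forced since algebraically closed groups are countable (Theorem~\ref{t.emb_alg_closed}) and $H$ embeds into one. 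For $(3)\Rightarrow(2)$, suppose $H$ is countable with all finitely generated subgroups having solvable word problem. Fix an algebraically closed group $A$; by Theorem~\ref{t.alg_c_word_p} every finitely generated subgroup of $H$ embeds into $A$. Since $A$ is homogeneous by Proposition~\ref{p.alg_closed_strongly_hom}, Lemma~\ref{l.fin_gen_extends} upgrades this to an embedding of all of $H$ into $A$.

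Next I would close the loop through genericity. For $(1)\Rightarrow(2)$: if $H$ is generically embeddable, then $\cale_H$ is comeager. By Theorem~\ref{t.alg_closed} the set $\calc$ of algebraically closed groups is also comeager, so $\cale_H\cap\calc$ is comeager, hence nonempty; thus $H$ embeds into \emph{some} algebraically closed group. But all countably infinite algebraically closed groups with the same finitely generated subgroups are isomorphic (they are homogeneous by Proposition~\ref{p.alg_closed_strongly_hom}, and one invokes the proposition following Definition~\ref{d.homogeneous}), and more to the point, embedding into one algebraically closed group should propagate to all of them. The cleanest route is to observe that embedding into one algebraically closed group already gives solvable word problem for each finitely generated subgroup—via Theorem~\ref{t.alg_c_word_p}, whose stated equivalence is with embeddability into \emph{every} algebraically closed group, so I would instead argue $(1)\Rightarrow(3)$ directly: from $H\in\cale_A$ for an algebraically closed $A$, each finitely generated $H'\leq H$ embeds into $A$, and since $A$ is one particular algebraically closed group while the word-problem characterization requires embeddability into all of them, I must verify that embeddability into a single algebraically closed group suffices to force solvable word problem. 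This follows because embeddability into \emph{some} algebraically closed group is equivalent to embeddability into \emph{every} one for finitely generated groups (both fail together when the word problem is unsolvable, by Theorem~\ref{t.alg_c_word_p}).

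For $(2)\Rightarrow(1)$, which is the heart of the argument: assuming $H$ embeds into every algebraically closed group, I want $\cale_H$ comeager. By the already-established $(2)\Leftrightarrow(3)$, $H$ is countable with all finitely generated subgroups having solvable word problem, equivalently (by Theorem~\ref{t.alg_c_word_p} again) each finitely generated subgroup embeds into every algebraically closed group. Now invoke Corollary~\ref{c.emb_iff_fin_gen}: $H$ is generically embeddable as soon as it is countable and each finitely generated subgroup is generically embeddable. So it remains to show a finitely generated group with solvable word problem is generically embeddable. For such a finitely generated $H'$, the set $\cale_{H'}$ is $F_\sigma$ by Lemma~\ref{l.F_sigma}. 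Since $H'$ embeds into every algebraically closed group and $\calc$ is comeager (Theorem~\ref{t.alg_closed}), we get $\calc\subseteq\cale_{H'}$, so $\cale_{H'}$ is comeager; being $F_\sigma$ and comeager it is in particular nonmeager with nonempty interior (Observation~\ref{o.F_sigma}), but comeagerness is exactly what we need.

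The main obstacle I anticipate is organizing the logical flow so that Theorem~\ref{t.alg_c_word_p}—stated for a \emph{single} finitely generated group and \emph{all} algebraically closed groups—is applied at the right granularity, and making sure the passage between ``embeds into some'' and ``embeds into every'' algebraically closed group is justified (for finitely generated groups this is delivered by Theorem~\ref{t.alg_c_word_p} together with the genericity of $\calc$; for general $H$ it rests on Lemma~\ref{l.fin_gen_extends} and homogeneity). Once these interfaces are pinned down, each implication reduces to a short invocation of an earlier result, and the proof is essentially a bookkeeping exercise assembling Theorem~\ref{t.alg_c_word_p}, Theorem~\ref{t.alg_closed}, Proposition~\ref{p.alg_closed_strongly_hom}, Lemma~\ref{l.fin_gen_extends}, Lemma~\ref{l.F_sigma}, and Corollary~\ref{c.emb_iff_fin_gen}.
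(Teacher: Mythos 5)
Your treatment of $(2)\Leftrightarrow(3)$ and $(2)\Rightarrow(1)$ matches the paper's (the latter is even more direct in the paper: condition (2) literally says $\calc\subseteq\cale_H$, and $\calc$ is comeager by Theorem~\ref{t.alg_closed}). However, your argument for $(1)\Rightarrow(2)$ (equivalently $(1)\Rightarrow(3)$) has a genuine gap. From $\cale_H\cap\calc\neq\emptyset$ you correctly conclude that $H$ embeds into \emph{some} algebraically closed group, but the claim you then rely on --- that for a finitely generated group, embeddability into some algebraically closed group is equivalent to embeddability into every one, ``both failing together when the word problem is unsolvable'' --- is false. By Scott's theorem (Theorem~\ref{t.emb_alg_closed}) \emph{every} countable group, in particular every finitely generated group with unsolvable word problem, embeds into some countable algebraically closed group; Theorem~\ref{t.alg_c_word_p} ties the solvable word problem only to embeddability into \emph{all} of them. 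So embedding into a single algebraically closed group gives you nothing, and your fallback remark about homogeneous groups with the same finitely generated subgroups being isomorphic does not help either, since distinct algebraically closed groups have wildly different families of finitely generated subgroups.

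The missing ingredient is Theorem~\ref{t.alg_isom_dense}, which you never invoke: the isomorphism class of \emph{every} countably infinite algebraically closed group is dense in $\calg$. The paper's argument for $(1)\Rightarrow(2)$ runs as follows. Reduce to a finitely generated subgroup $K\leq H$ via Lemma~\ref{l.fin_gen_extends}. Then $\cale_K\supseteq\cale_H$ is comeager, and by Lemma~\ref{l.F_sigma} it is $F_\sigma$, so by Observation~\ref{o.F_sigma} it has \emph{nonempty interior}. A nonempty open set meets every dense set, so it meets the isomorphism class of each countably infinite algebraically closed group $A$; hence $K$ embeds into every such $A$. Note that you do deploy the $F_\sigma$-plus-interior observation, but in the direction $(2)\Rightarrow(1)$, where it is not needed; it is precisely in $(1)\Rightarrow(2)$ that the combination of Lemma~\ref{l.F_sigma}, Observation~\ref{o.F_sigma} and Theorem~\ref{t.alg_isom_dense} does the real work of upgrading ``embeds into some algebraically closed group'' to ``embeds into all of them.''
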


\begin{proof}
(2)$\implies$(3) follows from Theorem~\ref{t.alg_c_word_p}.

(3)$\implies$(2): Suppose (3). By Theorem~\ref{t.alg_c_word_p} every finitely generated subgroup of $H$ can be embedded into every algebraically closed group. Since every algebraically closed group is homogeneous by Proposition~\ref{p.alg_closed_strongly_hom}, (2) follows from Lemma~\ref{l.fin_gen_extends}.

(2)$\implies$(1) is immediate from Theorem~\ref{t.alg_closed}.

(1)$\implies$(2): Fix a generically embeddable group $H$. By Lemma~\ref{l.fin_gen_extends} it suffices to prove that any finitely generated subgroup $K$ of $H$ can be embedded into every algebraically closed group. Fix $K$. Note that $\cale_K$ is comeager because it contains $\cale_H$. On the other hand, $\cale_K$ is $F_\sigma$ by Lemma~\ref{l.F_sigma}, hence it has nonempty interior by Observation~\ref{o.F_sigma}. However, isomorphism classes of algebraically closed groups are dense in $\calg$ by Theorem~\ref{t.alg_isom_dense}; therefore $K$ can be embedded into every algebraically closed group.
\end{proof}

Now we turn to isomorphism classes.

\begin{theorem}\label{t.isom}
Every isomorphism class is meager. That is, for every group $H$ the set $\cali_H$ is meager in $\calg$.
\end{theorem}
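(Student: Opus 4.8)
The plan is to combine the dichotomy for isomorphism classes (Theorem~\ref{t.isom0-1}) with the characterisation of generic embeddability, thereby reducing the whole statement to a single assertion about the generic group. By Theorem~\ref{t.isom0-1} each $\cali_H$ is meager or comeager, so it suffices to show that no isomorphism class is comeager. Suppose towards a contradiction that $\cali_H$ is comeager for some (necessarily countably infinite) $H$. Since $\calc$ is comeager by Theorem~\ref{t.alg_closed}, the set $\cali_H\cap\calc$ is nonempty, so $\cl G$ is algebraically closed for some $G$ with $\cl G\cong H$; hence $H$ is algebraically closed. Moreover $\cali_H\subseteq\cale_H$, so $\cale_H$ is comeager and $H$ is generically embeddable, whence by the equivalence (1)$\Leftrightarrow$(3) of Theorem~\ref{t.embed} every finitely generated subgroup of $H$ has solvable word problem.

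Writing
$$\calq\defeq\{G\in\calg:\ \text{every finitely generated subgroup of }\cl G\text{ has solvable word problem}\},$$
the previous paragraph shows $H\in\calq$, and since $\calq$ is a group property we get $\cali_H\subseteq\calq$, so $\calq$ is comeager. Thus the theorem reduces to proving that $\calq$ is \emph{not} comeager. For intuition it is worth noting the two cheap facts that this subsumes: if $H$ is not algebraically closed then $\cali_H\subseteq\calg\setminus\calc$ is meager, and if $H$ has a finitely generated subgroup $K$ with unsolvable word problem then $\cale_K$ is meager (it is not comeager by Theorem~\ref{t.embed}, and it is analytic, hence meager or comeager, by Theorem~\ref{t.embed0-1} and Lemma~\ref{l.F_sigma}) while $\cali_H\subseteq\cale_K$. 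So the only potentially comeager class is that of an algebraically closed group all of whose finitely generated subgroups have solvable word problem — which, being homogeneous by Proposition~\ref{p.alg_closed_strongly_hom}, is essentially unique — and ruling it out is precisely the statement that the generic group has a finitely generated subgroup with unsolvable word problem. I would first record that $\calq$ is Borel (unwinding ``solvable word problem'' as $\forall\vec b\,\exists e\,\forall w$ over the clopen matrix $\{G:w(\vec b)=_{\cl G}1\}$ yields an $F_{\sigma\delta}$ set) and isomorphism invariant, so by the 0-1 law (Theorem~\ref{t.0-1}) it is meager or comeager; hence it is enough to exhibit a dense $G_\delta$ of groups having a finitely generated subgroup with unsolvable word problem.

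The hard part — and the real content of the theorem — is exactly this construction. The obstruction is that by Theorem~\ref{t.embed} \emph{no fixed} finitely generated group $K$ with unsolvable word problem is generically embeddable: every $\cale_K$ is meager, so the generic group omits each prescribed $K$. Consequently the witnessing subgroup must depend on $G$ and cannot come from embedding one universal group. My plan is a Baire-category diagonalisation: enumerate the Turing machines $T_0,T_1,\dots$ and build a dense $G_\delta$ so that for generic $G$ a pair of generators $g_1,g_2$, chosen \emph{fresh} (outside the finite support already committed, so that the accumulated data does not pin $\langle g_1,g_2\rangle$ to a finite, and hence decidable, group), satisfies for every $e$ that $T_e$ fails to decide $\{w:w(g_1,g_2)=_{\cl G}1\}$. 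The main technical difficulty is the realizability at each stage: given the finite group data so far one must be able both to \emph{impose} and to \emph{omit} the next relation $w(g_1,g_2)=1$ while remaining inside the current basic clopen set, so that the set of $G$ defeating $T_e$ is dense open. Verifying this — in essence, that a finite partial multiplication together with one extra relation or non-relation is still realizable by a countably infinite group — together with the bookkeeping that keeps $g_1,g_2$ fixed once selected, is where the work lies; the reduction above is routine.
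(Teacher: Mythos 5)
Your reduction is exactly the paper's: assume $\cali_H$ is nonmeager, invoke Theorem~\ref{t.isom0-1} to get comeagerness, intersect with the comeager set of Theorem~\ref{t.alg_closed} to conclude $H$ is algebraically closed, and use $\cali_H\subseteq\cale_H$ together with Theorem~\ref{t.embed} to conclude that every finitely generated subgroup of $H$ has solvable word problem. At that point the paper is done in one line: it cites the theorem (due to Miller, as pointed out by Macintyre) that \emph{every algebraically closed group has a finitely generated subgroup with unsolvable word problem}, so the two conclusions about $H$ are directly contradictory and no further topological argument is needed. This is the one ingredient your write-up is missing, and without it your argument is not complete.

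What you propose in its place --- a Baire-category diagonalisation against all Turing machines producing, for the generic $G$, a $2$-generated subgroup with noncomputable word problem --- is a plausible route, but it is not carried out: you yourself flag the realizability of each forcing step and the bookkeeping of the generators as ``where the work lies.'' These difficulties are genuine. Since no fixed finitely generated group with unsolvable word problem is generically embeddable (as you correctly observe), the witness must be built relation by relation, and at each stage one must verify that both imposing and omitting the next relation $w(g_1,g_2)=1$ is consistent with the finite data already committed and with extending to a group table --- this requires free-product/quotient constructions and a case analysis (e.g.\ when the machine under attack happens to compute the word problem of the free group, one is forced to introduce a nontrivial relation without collapsing the options needed for later machines). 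Completing this would essentially amount to reproving the Miller--Macintyre result in the generic setting. So the verdict is: correct skeleton, identical to the paper's up to the last step, but with the decisive final ingredient replaced by an unexecuted construction sketch; as written there is a genuine gap.
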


\begin{proof}
Suppose there is some group $H$ such that $\cali_H$ is nonmeager. Then $\cali_H$ is comeager by Theorem~\ref{t.isom0-1}. On the one hand, this implies that $H$ is algebraically closed by Theorem~\ref{t.alg_closed}. On the other hand, $H$ is generically embeddable, hence every finitely generated subgroup of $H$ has solvable word problem by Theorem~\ref{t.embed}.

However, it follows from a result of Miller \cite{MILLER}, as it is pointed out in \cite[page 58]{MACINTYRE1}, that every algebraically closed group has a finitely generated subgroup with unsolvable word problem. Consequently, such an $H$ cannot exist.
\end{proof}

\begin{remark}
\textbf{(Model-theoretic generalization)} There are two obstacles in the way of generalization:

(1) It is \textbf{not} true in general that countably infinite existentially closed models of a $\forall_2$-theory are always homogeneous. Note that Proposition~\ref{p.alg_closed_strongly_hom} is specific to group theory.

(2) We are not aware of any analogous theory of the word problem in this general model-theoretic setting.
\end{remark}

\section{Abelian groups}\label{s.abelian_groups}

In this section we prove that there is a generic countably infinite Abelian group. That is, there is a comeager isomorphism class in the subspace $\cala\defeq\{G\in\calg:\ \cl G\text{ is Abelian}\}$. To avoid confusion we \textbf{do not} switch to the additive notation.

First of all observe that
$$\cala=\{G\in\calg:\ \forall n,k\ (nkn^{-1}k^{-1}=1)\}=\bigcap_{n,k\in\nat}\{G\in\calg:\ nkn^{-1}k^{-1}=1\}$$
is a closed subspace of $\calg$, therefore it is a Polish space.

\begin{remark}
It is very easy to see that $\cala\subseteq\calg$ has empty interior, hence it is nowhere dense. Thus for a comeager property $\calp\subseteq\calg$ the set $\calp\cap\cala$ may be meager in $\cala$.
\end{remark}

The following theorem is well-known, see \cite[Theorem 3.1 of Chapter 4]{FUCHS}.

\begin{theorem}\label{t.divisible}
Every divisible group is of the form
\begin{equation}\label{e.divisible}
\left(\bigoplus_{p\in\mathbf{P}}\int[p^\infty]^{(I_p)}\right)\oplus\rat^{(I)},
\end{equation}
where $\mathbf{P}$ is the set of prime numbers, $\int[p^\infty]$ is the Prüfer $p$-group, $I$ and $I_p$ are arbitrary sets of indices, and for a group $G$ and a set $J$ the term $G^{(J)}$ abbreviates the direct sum $\bigoplus_{j\in J}G$.
\end{theorem}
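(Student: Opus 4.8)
The plan is to prove the classical structure theorem by passing through the injectivity of divisible groups and then reducing to the torsion-free and primary cases. Throughout this section ``divisible'' means that for every $g$ and every $n\in\nat$ there is some $x$ with $x^n=g$, and every group is Abelian, so I may freely regard these groups as $\int$-modules. The first ingredient I would establish is Baer's criterion, identifying divisible Abelian groups with injective $\int$-modules: the only nontrivial point is extending a homomorphism defined on a subgroup $n\int$ of $\int$ to all of $\int$, and divisibility is exactly the condition that makes this extension exist. From injectivity I obtain the crucial splitting principle, namely that \emph{any divisible subgroup of an Abelian group is a direct summand}, since an injective submodule always splits off via a retraction.

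Next I would reduce to two special cases. Let $D$ be divisible with torsion subgroup $T$. A short computation shows $T$ is again divisible (if $g$ has finite order $m$ and $x^n=g$, then $x$ has order dividing $nm$), so the splitting principle gives $D=T\oplus C$ with $C\cong D/T$ torsion-free and divisible. A torsion-free divisible group carries a unique $\rat$-module structure, since division by each $n$ exists by divisibility and is unique by torsion-freeness; hence $C$ is a $\rat$-vector space and $C\cong\rat^{(I)}$ for some index set $I$. This disposes of the torsion-free summand.

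It remains to treat the torsion divisible group $T$. Here I would first apply the primary decomposition $T=\bigoplus_{p\in\mathbf{P}}T_p$, where $T_p$ collects the elements of $p$-power order; each $T_p$ is a direct summand of $T$ and therefore divisible. The heart of the argument is to show that a divisible $p$-group is isomorphic to $\int[p^\infty]^{(I_p)}$. For this I would consider the socle $T_p[p]=\{x:x^p=1\}$, which is a vector space over the field with $p$ elements; fixing a basis $\{s_i\}_{i\in I_p}$ and using divisibility to extend each $s_i$ to a coherent system of $p$-power roots, I obtain subgroups $Q_i\cong\int[p^\infty]$, and the goal becomes the decomposition $T_p=\bigoplus_{i\in I_p}Q_i$.

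I expect this last decomposition to be the main obstacle, namely proving simultaneously that the $Q_i$ are independent and that they exhaust $T_p$. Independence reduces, via a minimal-order argument, to the linear independence of the $s_i$ in the socle, while generation requires an induction on the order of a given element together with a careful correction step using divisibility to adjust discrepancies in the socle; this is the standard principle that the socle controls a divisible $p$-group. A Zorn's lemma maximality argument organizes the construction of the coherent root systems for all basis elements at once. Once $T_p\cong\int[p^\infty]^{(I_p)}$ is established for every $p$, assembling the torsion-free summand $\rat^{(I)}$ with the primary pieces yields the stated form~(\ref{e.divisible}).
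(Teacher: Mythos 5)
The paper does not prove this theorem at all: it is quoted as a classical result with a citation to Fuchs (Theorem 3.1 of Chapter 4), so there is no proof of record to compare yours against. Judged on its own, your outline is the standard textbook argument and is essentially correct: Baer's criterion identifies divisible Abelian groups with injective $\int$-modules, injectivity yields that divisible subgroups split off as direct summands, the torsion subgroup $T$ of a divisible $D$ is itself divisible (your order computation is the right one), the torsion-free complement is a $\rat$-vector space and hence of the form $\rat^{(I)}$, and the primary components $T_p$ are divisible $p$-groups to be identified with $\int[p^\infty]^{(I_p)}$. The one step you correctly flag as the heart of the matter --- that a divisible $p$-group is determined by its socle --- is left as an outline; note that the ``generation'' half is cleaner than your induction-with-corrections suggests: once $Q=\bigoplus_{i}Q_i$ is built with $Q[p]=T_p[p]$, divisibility of $Q$ makes it a direct summand, $T_p=Q\oplus R$, and $R[p]=0$ forces the $p$-group $R$ to be trivial. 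With that step filled in (and Zorn's lemma for the basis of the socle and for the splitting), your sketch completes to a full proof of the cited theorem.
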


Let $A\in\calg$ be such that
$$\cl A\cong\bigoplus_{p\in\mathbf{P}}\int[p^\infty]^{(\nat)}.$$
\begin{remark}\label{r.abelian_embeds}
Another well-known theorem is that every Abelian group can be embedded into a divisible group, see \cite[Theorem 1.4 of Chapter 4]{FUCHS}. Thus by Theorem~\ref{t.divisible} every countable Abelian torsion group can be embedded into $\cl A$.
\end{remark}

\begin{remark}\label{r.unique_abelian}
If $G$ is a divisible Abelian torsion group, then it can be written as the torsion summand in Theorem~\ref{t.divisible}. If every finite Abelian group can be embedded into $G$, then $I_p$ is infinite for every $p\in\mathbf{P}$ since $G$ must contain infinitely many elements of order $p$ for every $p$. If $G$ is countable, then $I_p$ is countable for every $p\in\mathbf{P}$. Therefore, $\cl A$ is the unique countable, divisible Abelian torsion group that contains every finite Abelian group (up to isomorphism).
\end{remark}

\begin{prop}\label{p.abel_G_delta}
The sets $\cald\defeq\{G\in\cala:\ \cl G\text{ is divisible}\}$, $\calt\defeq\{G\in\cala:\ \cl G\text{ is a torsion group}\}$ and $\calf\defeq\{G\in\cala:\ \text{every finite Abelian group can be embedded into }\cl G\}$ are $G_\delta$ in $\cala$.
\end{prop}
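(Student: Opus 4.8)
The plan is to write each of the three sets as a countable combination of clopen pieces that manifestly has the form $\bigcap_n(\text{open})$, using Proposition~\ref{p.basis2} to see that all the building blocks are clopen. Two of the three follow the same elementary pattern, and only $\calf$ requires a small additional observation.

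First I would handle $\calt$. The group $\cl G$ is a torsion group exactly when every $n\in\nat$ has finite order, i.e. $\forall n\ \exists k\ (n^k=1)$, so
$$\calt=\bigcap_{n\in\nat}\ \bigcup_{k\in\nat}\{G\in\cala:\ n^k=1\}.$$
Each set $\{G\in\cala:\ n^k=1\}$ is clopen by Proposition~\ref{p.basis2}, hence the inner union is open and $\calt$ is $G_\delta$. For $\cald$ I would use that an Abelian group is divisible precisely when every element has an $m$-th root for every $m\in\nat$, i.e. $\forall n\ \forall m\ \exists k\ (k^m=n)$, giving
$$\cald=\bigcap_{n,m\in\nat}\ \bigcup_{k\in\nat}\{G\in\cala:\ k^m=n\},$$
which is $G_\delta$ by the same reasoning.

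The set $\calf$ is the only point where one departs from this pattern, and it is the step I would single out as the crux. I would fix an enumeration $F_1,F_2,\dots$ of the finite Abelian groups up to isomorphism (there are only countably many) and write $\calf=\bigcap_{t\in\nat}(\cale_{F_t}\cap\cala)$, reducing the task to showing that $\cale_F\cap\cala$ is \emph{open} for each finite Abelian $F$. Writing $F=\{f_1,\dots,f_r\}$ with $f_1=1_F$, an embedding of $F$ into $\cl G$ amounts exactly to a choice of pairwise distinct $b_1,\dots,b_r\in\nat$ with $b_1=1$ such that $b_ib_j=b_s$ whenever $f_if_j=f_s$ in $F$. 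Because $F$ is finite this is a finite list of equations and inequations, so for each fixed tuple $(b_1,\dots,b_r)$ the corresponding condition is clopen by Proposition~\ref{p.basis2}; taking the countable union over all tuples shows $\cale_F\cap\cala$ is open, whence $\calf$ is $G_\delta$.

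The expected obstacle is precisely the direction of this last argument: Lemma~\ref{l.F_sigma} gives only an $F_\sigma$ description of $\cale_H$ for a general finitely generated $H$, which is the wrong side for a $G_\delta$ conclusion. The point to exploit is that for a \emph{finite} group the infinitely many word relations defining embeddability collapse to the finitely many entries of the multiplication table of $F$, so the defining condition becomes a finite Boolean combination of clopen sets rather than an infinite intersection over all words. This is exactly what upgrades the $F_\sigma$ description to an open one for finite $F$; beyond this observation, everything reduces to the routine fact, already supplied by Proposition~\ref{p.basis2}, that fixing finitely many products and non-products of given numbers cuts out a clopen set.
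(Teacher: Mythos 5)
Your proposal is correct and follows essentially the same route as the paper: the same $\bigcap\bigcup$ decompositions for $\cald$ and $\calt$, and for $\calf$ the same reduction to showing $\cale_F\cap\cala$ is open for each finite Abelian $F$ by taking a union over finite tuples of clopen conditions given by the multiplication table of $F$. The extra normalization $b_1=1$ is harmless and the rest matches the paper's argument.
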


\begin{proof}
By definition
$$\cald=\{G\in\cala:\ \forall n,k\in\nat\ \exists m\in\nat\ (m^k=n)\}=\bigcap_{n,k\in\nat}\ \bigcup_{m\in\nat}\ \underbrace{\{G\in\cala:\ m^k=n\}}_{\text{clopen}},$$
which is $G_\delta$ in $\cala$.
Again, by definition
$$\calt=\{G\in\cala:\ \forall n\ \exists k\ (n^k=1)\}=\bigcap_{n\in\nat}\ \bigcup_{k\in\nat}\ \underbrace{\{G\in\cala:\ n^k=1\}}_{\text{clopen}},$$
which is $G_\delta$ in $\cala$.
For $\calf$ note that there are countably infinitely many finite Abelian groups up to isomorphism, hence it suffices to prove that for any fixed finite Abelian group $H$ the set $\cala\cap\cale_H$ is $G_\delta$. Let $\dom(H)=\{h_1,\dots,h_n\}$. For a $G\in\cala$ the group $H$ is embeddable into $\cl G$ if and only if there exist pairwise distinct numbers $g_1,\dots,g_n\in \nat$ such that $h_i\cdot h_j=h_l$ in $H$ implies $g_i\cdot g_j=g_l$ in $\cl G$ for all $i,j,l\leq n$. That is,
$$\calf=\bigcup_{\substack{g_1,\dots,g_n\in\nat\\ \text{p.~distinct}}}\ \bigcap_{\substack{h_i,h_j,h_l\in H,\\ h_i\cdot h_j=h_l}}\ \{G\in\cala:\ g_i\cdot g_j=g_l\},$$
which is open in $\cala$.
\end{proof}

From Remark~\ref{r.unique_abelian} we know that $\cali_{\cl A}=\cala\cap\cald\cap\calt\cap\calf$, which is $G_\delta$ in $\cala$. Now we show that it is dense in $\cala$.

\begin{prop}\label{p.abel_dense}
The isomorphism class $\cali_{\cl A}$ is dense in $\cala$.
\end{prop}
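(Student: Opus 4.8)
The plan is to show that $\cali_{\cl A}$ meets every nonempty basic clopen subset of $\cala$. By Remark~\ref{r.basis1} such a set has the form $\calb\cap\cala$ with $\calb=\{G\in\calg:\ \forall i,j\le k\ (i\cdot j=m_{i,j})\}$; put $M\defeq\supp\calb$ (Notation~\ref{n.supp}), so $\{1,\dots,k\}\cup\{m_{i,j}:i,j\le k\}=M$. Exactly as in the proof of Theorem~\ref{t.alg_isom_dense}, producing a member of $\calb\cap\cali_{\cl A}$ amounts to placing a copy of $\cl A$ inside $\calb$, and via the induced homeomorphisms of Observation~\ref{o.induced_homeo} this reduces to finding an injection $\sigma\colon M\to\dom(\cl A)$ with $\sigma(1)=1_{\cl A}$ such that $\sigma(i)\cdot\sigma(j)=\sigma(m_{i,j})$ in $\cl A$ for all $i,j\le k$. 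Indeed, given such a $\sigma$, extend it to a bijection $\beta\colon\nat\to\nat$ (possible since $M$ and $\sigma(M)$ are finite and $\beta$ may fix $1$) and set $\alpha\defeq\beta^{-1}$; then $h_\alpha(A)\in\calb$ while $\cl{h_\alpha(A)}\cong\cl A$, so $h_\alpha(A)\in\calb\cap\cali_{\cl A}$.

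To build $\sigma$, I would pick any $G\in\calb\cap\cala$ and consider the finitely generated Abelian group $K\defeq\langle M\rangle_{\cl G}$. Any homomorphism $\phi\colon K\to\cl A$ that is injective on $M$ suffices: since $\phi$ is a homomorphism and $i\cdot j=m_{i,j}$ holds in $\cl G$ for $i,j\le k$, we get $\phi(i)\cdot\phi(j)=\phi(m_{i,j})$, while $\phi(1)=1_{\cl A}$ automatically, so $\sigma\defeq\phi|_M$ is as required. By the structure theorem for finitely generated Abelian groups, $K\cong\int^{\,r}\oplus T$ with $T$ finite. Let $N$ be larger than the absolute value of every integer coordinate appearing in the $\int^{\,r}$-component of a difference $u-v$ for $u,v\in M$, and let $\pi\colon K\to(\int/N\int)^{\,r}\oplus T$ reduce the free coordinates modulo $N$ and act as the identity on $T$. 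For distinct $u,v\in M$ the difference $u-v$ either has a nonzero free part, which survives modulo $N$ by the choice of $N$, or has zero free part and hence a nonzero torsion part; either way $\pi(u)\neq\pi(v)$, so $\pi$ is injective on $M$. The target $(\int/N\int)^{\,r}\oplus T$ is a finite Abelian group, hence embeds into $\cl A$ by Remark~\ref{r.abelian_embeds}; composing this embedding with $\pi$ gives the desired $\phi$.

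The genuine difficulty, and the reason Lemma~\ref{l.dense} cannot be invoked directly, is that $\cl G$ (and even the subgroup $K$) may have a nontrivial free part $\int^{\,r}$, which cannot be embedded into the torsion group $\cl A$. The key observation is that we never need to embed all of $K$: we only need to separate the finitely many elements of $M$ while respecting the finitely many prescribed products $i\cdot j=m_{i,j}$. Passing to the finite quotient $(\int/N\int)^{\,r}\oplus T$ (an instance of the residual finiteness of finitely generated Abelian groups) does exactly this, and the defining property of $\cl A$ recorded in Remark~\ref{r.unique_abelian} then supplies the required embedding of this finite group into $\cl A$, completing the density argument.
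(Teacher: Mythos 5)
Your proof is correct, and its core trick coincides with the paper's: the obstruction to simply invoking Lemma~\ref{l.dense} is the possible torsion-free part of a group in $\calb\cap\cala$, and both arguments neutralize it by reducing the free coordinates modulo a large integer chosen so that the finitely many elements of $\supp\calb$ remain distinct, then embedding the resulting torsion group into $\cl A$. Where you genuinely diverge is in which group this surgery is performed on. The paper first embeds the whole group $\cl H$ into a countable \emph{divisible} group, writes it via Theorem~\ref{t.divisible} as $\bigl(\bigoplus_p\int[p^\infty]^{(I_{K,p})}\bigr)\oplus\rat^{(I_K)}$, and quotients by $n\int$ in each $\rat$-coordinate, producing a countable Abelian torsion group that embeds into $\cl A$ by Remark~\ref{r.abelian_embeds}. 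You instead pass \emph{down} to the finitely generated subgroup $\langle\supp\calb\rangle_{\cl G}$, invoke the structure theorem for finitely generated Abelian groups to write it as $\int^{\,r}\oplus T$, and reduce modulo $N$ to land in a \emph{finite} Abelian group --- essentially a residual-finiteness argument --- so that only the defining property of $\cl A$ from Remark~\ref{r.unique_abelian} (containing every finite Abelian group) is needed, rather than the full embedding of countable torsion groups. Your route is slightly more economical and avoids the divisible hull entirely, at the cost of importing the finitely generated structure theorem, which the paper does not use; the injectivity check (a nonzero difference either has a free coordinate that survives reduction by the choice of $N$, or is a nonzero torsion element left untouched) and the final step of extending the product-preserving injection $\sigma$ on $\supp\calb$ to a permutation of $\nat$ fixing $1$ are the same as the paper's argument with $\nu\circ\psi\circ\varphi$ and $\vartheta$.
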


\begin{proof}
Fix any nonempty basic clopen $\calb=\{G\in\cala:\ \forall i,j\leq k\ (i\cdot j=m_{i,j})\}$. Pick some $H\in\calb$ and a divisible Abelian group $K$ with an embedding $\varphi:\cl H\embeds K$ (see Remark~\ref{r.abelian_embeds}). Clearly, $K$ can be chosen to be countable. We write $K$ in the form
$$\left(\bigoplus_{p\in\mathbf{P}}\int[p^\infty]^{(I_{K,p})}\right)\oplus\rat^{(I_K)}.$$
Consider the finite subset $\varphi(\supp\calb)\subseteq K$ (recall Notation~\ref{n.supp}). Every $x\in\varphi(\supp\calb)$ has finitely many nonzero coordinates $x_i$ with $i\in I_K$. Let $n$ be a natural number greater than $2\cdot\max\{|x_i|:\ x\in\varphi(\supp\calb), i\in I_K\}$. Let $N$ be the subgroup of $K$ generated by elements of the form $((0,0,\dots),(0,\dots,0,\underset{r\text{th}}{n},0,\dots))$ for every $r\in I_K$. Let $\psi:K\to K/N$ be the quotient map. Clearly $K/N$ is
$$\left(\bigoplus_{p\in\mathbf{P}}\int[p^\infty]^{(I_{K,p})}\right)\oplus(\rat/n\int)^{(I_K)}.$$
This is a countable Abelian torsion group, thus there is an embedding $\nu: K/N\embeds \cl A$ by Remark~\ref{r.abelian_embeds}. Notice that $\nu\circ\psi\circ\varphi:\ \cl H\to \cl A$ is homomorphism that is injective on the set $\supp\calb\subseteq\cl H$ by the choices of $n$ and $N$. Thus there is a bijection $\vartheta:\nat\to\nat$ that extends the finite bijection $(\nu\circ\psi\circ\varphi|_{\supp\calb})^{-1}$. For such a $\vartheta$ we have $h_\vartheta(A)\in\calb$ because $H$ and $h_\vartheta(A)$ coincide on $\{1,\dots,k\}\times\{1,\dots,k\}$. Since $\overline{h_\vartheta(A)}\cong\cl A$, we conclude that $\cali_{\cl A}$ is dense in $\cala$.
\end{proof}

\begin{cor}\label{c.generic_abelian}
The isomorphism class $\cali_{\cl A}$ is comeager in $\cala$.
\end{cor}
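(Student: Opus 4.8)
The plan is to combine the two immediately preceding results with the standard characterization of comeagerness in a Baire space. By Corollary~\ref{c.generic_abelian}'s hypotheses I have already established in Remark~\ref{r.unique_abelian} that the isomorphism class $\cali_{\cl A}$ equals $\cala\cap\cald\cap\calt\cap\calf$, and Proposition~\ref{p.abel_G_delta} shows each of $\cald$, $\calt$, $\calf$ is $G_\delta$ in $\cala$, so their intersection is $G_\delta$ as well. Proposition~\ref{p.abel_dense} provides exactly the density: $\cali_{\cl A}$ is dense in $\cala$.

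First I would note that $\cala$, being a closed subspace of the Polish space $\calg$, is itself Polish by Theorem~\ref{t.G_delta} (or directly, since closed subspaces of completely metrizable spaces are completely metrizable), and hence is a Baire space by the Baire category theorem (Theorem~\ref{t.bct}). This is the ambient hypothesis needed to invoke the comeagerness criterion.

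Then I would apply Proposition~\ref{p.comeager}: in a Baire space a set is comeager if and only if it contains a dense $G_\delta$ set. Here $\cali_{\cl A}$ is itself dense (Proposition~\ref{p.abel_dense}) and $G_\delta$ (from Proposition~\ref{p.abel_G_delta} together with Remark~\ref{r.unique_abelian}), so it is a dense $G_\delta$ set contained in itself, and therefore comeager in $\cala$. This completes the proof.

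There is really no main obstacle here, since this corollary is a bookkeeping step that packages the genuine work done in the two preceding propositions; the only point requiring a moment's care is confirming that $\cala$ is a Baire space so that Proposition~\ref{p.comeager} applies, but this follows immediately from $\cala$ being closed in $\calg$ and hence Polish. I would keep the write-up to a single sentence or two citing Proposition~\ref{p.abel_G_delta}, Remark~\ref{r.unique_abelian}, Proposition~\ref{p.abel_dense}, and Proposition~\ref{p.comeager}.
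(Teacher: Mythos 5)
Your proof is correct and is exactly the argument the paper intends (and leaves implicit): $\cali_{\cl A}=\cala\cap\cald\cap\calt\cap\calf$ is $G_\delta$ by Proposition~\ref{p.abel_G_delta} and Remark~\ref{r.unique_abelian}, dense by Proposition~\ref{p.abel_dense}, and $\cala$ is Polish as a closed subspace of $\calg$, so Proposition~\ref{p.comeager} gives comeagerness.
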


\subsection*{Related results}

Let us sketch some parts of the paper \cite{TENT1} of Z.~Kabluchko and K.~Tent. 

Let $L$ be a countable language. Let $\calc$ be a countably infinite set of finitely generated $L$-structures. The set $\calc$ is a \textbf{Fra\"issé class} if it has the hereditary property, the joint embedding property and the amalgamation property. See \cite{TENT1} for more definitions. The \textbf{age} of an $L$-structure $S$ is the set of isomorphism types of finitely generated substructures of $S$.

Let $\mathbb{S}$ be the set of all not finitely generated $L$-structures on $\nat$ with age contained in $\calc$. Consider the following topology $\calt$ on $\mathbb{S}$. For every finitely generated $L$-structure $B$ with $\dom(B)\subseteq\nat$ whose age is contained in $\calc$, let $\calo_B$ be the set of $S\in\mathbb{S}$ whose restriction to $\dom(B)$ coincides with $B$. Sets of the form $\calo_B$ constitute a basis for a topology $\calt$. The space $(\mathbb{S},\calt)$ is a Baire space.

\begin{theorem}\label{t.fraisse}
(Fra\"issé) For any Fra\"issé class $\calc$ there exists a countable structure $M$ with the following properties:

(1) The age of $M$ is $\calc$. (universality)

(2) Any isomorphism between finitely generated substructures of $M$ extends to an automorphism of $M$. (homogeneity)

Moreover, $M$ is unique up to isomorphism. We call $M$ the \textbf{Fra\"issé limit} of $\calc$.
\end{theorem}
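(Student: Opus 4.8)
The plan is to carry out the classical back-and-forth construction. I would build $M$ as a countable increasing union $M=\bigcup_{n=0}^\infty M_n$ of finitely generated structures from $\calc$, arranged so that the limit $M$ enjoys the following \emph{extension property}: for every finitely generated substructure $A$ of $M$ and every embedding $A\hookrightarrow B$ with $B\in\calc$, there is an embedding $B\hookrightarrow M$ that agrees with the inclusion of $A$ into $M$. The key conceptual point is that a structure whose age is contained in $\calc$ and which has this extension property automatically satisfies both (1) and (2) and is unique up to isomorphism. Thus once the construction is in place, universality, homogeneity and uniqueness all fall out of one back-and-forth scheme.

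For the construction itself I would first observe that, since $\calc$ is countable up to isomorphism, there are only countably many \emph{tasks} of the form ``a finitely generated $A$ together with a finitely generated extension $A\hookrightarrow B$ in $\calc$.'' Starting from any $M_0\in\calc$, I would enumerate all such tasks and process them one at a time with suitable bookkeeping, so that every task arising inside the growing structure is eventually addressed. At stage $n$, given the current $M_n$ and a task consisting of a substructure $A\subseteq M_n$ and an extension $A\hookrightarrow B$, I would invoke the amalgamation property to amalgamate $M_n$ and $B$ over $A$, obtaining $M_{n+1}\in\calc$ that contains a copy of $B$ over $A$. The joint embedding property is used in addition to guarantee that every isomorphism type in $\calc$ eventually embeds, so that the age of $M$ is exactly $\calc$ rather than merely contained in it.

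Granting the extension property, homogeneity is a standard back-and-forth argument: given an isomorphism $f\colon A\to A'$ between finitely generated substructures of $M$, I would fix an enumeration of $\dom(M)$ and alternately extend the domain and the range of $f$ by one element, each time using the extension property to realize the required one-point extension inside $M$; the union of the resulting chain of partial isomorphisms is an automorphism of $M$ extending $f$. Universality is then immediate, since the extension property together with the joint embedding property forces every member of $\calc$ to embed into $M$, while the hereditary property keeps the age inside $\calc$. Uniqueness follows from the same back-and-forth played \emph{between} two structures $M$ and $N$ that both have age $\calc$ and the extension property: starting from the empty partial map and alternating, one builds an isomorphism $M\to N$.

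The main obstacle is organizing the bookkeeping so that \emph{every} extension task that ever appears in the limit is processed at some finite stage. This is purely combinatorial, but it must be arranged with care, because new finitely generated substructures and new extension requests keep appearing as the chain grows; one has to interleave the enumerations so that none is permanently neglected. A secondary point demanding attention is the repeated appeal to amalgamation: at each stage the amalgamation must be taken over the correct common substructure $A$, and the outcome must again lie in $\calc$, which is exactly what the amalgamation and hereditary properties supply.
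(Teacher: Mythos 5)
Your proposal is the standard and correct proof of Fra\"iss\'e's theorem: build a chain in $\calc$ realizing all extension tasks by dovetailed bookkeeping and amalgamation, isolate the extension property of the limit, and derive universality, homogeneity and uniqueness by back-and-forth. The paper does not prove this statement at all --- it is quoted as a classical result (with the relevant definitions deferred to the cited literature) --- so there is nothing to compare against; your argument is the usual one and its steps are sound, including the two points you flag (the dovetailing of tasks and the fact that the hereditary property puts each finitely generated $A\subseteq M_n$ back into $\calc$ so that amalgamation applies).
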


Kabluchko and Tent proved in \cite{TENT1} that for a Fra\"issé class $\calc$ that does not contain its Fra\"issé limit the isomorphism class of the Fra\"issé limit of $\calc$ is comeager in the associated space $(\mathbb{S},\calt)$.

It is not hard to prove that finite Abelian groups form a Fra\"issé class $\mathfrak{F}$ whose Fra\"issé limit is $\cl A$. Note that the associated space $\mathbb{S_\mathfrak{F}}$ consists of countably infinite Abelian torsion groups. It can be shown that since $\mathfrak{F}$ consists of finite groups, the associated topology $\calt_\mathfrak{F}$ is the same as the subspace topology induced by the natural embedding $\mathbb{S}_\mathfrak{F}\embeds\calg$, see  \cite[page 5]{TENT1}. From this point of view Corollary~\ref{c.generic_abelian} says that the isomorphism type of $\cl A$ is generic not only in $\mathbb{S}$ but among all countably infinite Abelian groups.

One may consider the Fra\"issé class $\mathfrak{A}$ of all finitely generated Abelian groups. Its Fra\"issé limit is
$$B=\left(\bigoplus_{p\in\mathbf{P}}\int[p^\infty]^{(\nat)}\right)\oplus\rat^{(\nat)}.$$
The associated space $\mathbb{S}_\mathfrak{A}$ consist of all countably infinite not finitely generated Abelian groups. It is easy to show that in this case the associated topology $\calt_\mathfrak{A}$ is \textbf{not} the same as the subspace topology induced by $\mathbb{S}_\mathfrak{A}\embeds \calg$. The set $\mathbb{S}_\mathfrak{A}$ with any of these two topologies is a Baire space. In the first case, $B$ represents the comeager isomorphism class. In the second case, $\cl A$ does.

\section{Connections with infinite games}\label{s.games}

Infinite games are closely related to Baire category. See \cite[Subsection~20.A]{KECHRIS} for a general introduction to infinite games. Our results can be reformulated as theorems on a specific infinite game. Let us introduce this game.

There are two players, say Eve and Odd, Eve starts. At the beginning of the game they are given an empty infinite table (representing $\nat\times\nat$). They alternately choose finitely many cells and write a natural number in each cell. We want them to construct a group multiplication table in infinitely many steps, hence we add some extra rules. At the $n$th step the next player has to guarantee the following:

(1) After the $n$th step there exists a group multiplication table that extends their partially filled table.

(2) Cells corresponding to elements of $\{1,\dots,n+1\}\times\{1,\dots,n+1\}$ are filled in.

(3) The elements $1,\dots,n+1$ have inverses. That is, each of the first $n+1$ rows and columns contain a $1$.

As one checks easily, these rules guarantee that the result of a run of the game is a group multiplication table $G$. Let $\calp\subseteq\calg$ be a group property. We say that Odd wins if the resulting group $\cl G$ is of property $\calp$ (or more formally, if $G\in\calp$). Otherwise, Eve wins. We denote this game by $G(\calp)$.

In fact, $G(\calp)$ is a special case of the Banach-Mazur game (see \cite[Subsection~8.H]{KECHRIS}):

The players together define a decreasing sequence of basic clopen sets $\calb_0\supseteq\calb_1\supseteq\calb_2\supseteq\dots$ in $\calg$. Rule (1) assures that $\calb_i$ is nonempty and rules (2) and (3) assure that the intersection is a singleton. It is well-known that if the Banach-Mazur game is played on a Polish space, then Odd has a winning strategy if and only if his winning set (now $\calp\subseteq\calg$) is comeager \cite{KECHRIS}.

Thus a group property $\calp\subseteq\calg$ is generic if and only if Odd has a winning strategy in $G(\calp)$. It is a pleasant exercise to reformulate our results as theorems about $G(\calp)$.

As we have written in the introduction, infinite games are intensively studied from the model-theoretic point of view. Let us mention a recent example.

A.~Krawczyk and W.~Kubiś studied a variant $\bm(\calf,\cala)$ of the Banach-Mazur game in \cite{KUBIS1}. In their game the players alternately choose finitely generated structures $A_n$ from a given class $\calf$. Together they define an increasing chain $A_0\subseteq A_1\subseteq A_2\subseteq\dots$, which gives a limit structure. Among other results they characterized when Odd can attain a given isomorphism class.

It is not clear if there is a natural topology with which $\bm(\calf,\cala)$ is a special case of the general Banach-Mazur game. However, one may define genericity directly by winning strategies: a group property is generic if Odd can always attain it. With this natural definition $\bm(\calf,\cala)$ is \textbf{not equivalent} to our game $G(\calp)$. Indeed, observe that in $\bm(\calf,\cala)$ for any finitely generated group $H$ both players can attain the property of $H$ being embeddable in the resulting group. On the other hand, by Theorem~\ref{t.embed} and Theorem~\ref{t.0-1} in the game $G(\calp)$ neither of the players can attain the embeddability of a finitely generated group $H$ with unsolvable word problem.

\section{Further problems}\label{s.problems}

In this section we collected some problems representing directions in which the study of generic groups may be continued.

\textbf{Applications in algebra.} Similarly to the case of Abelian groups, one may consider various subspaces of $\calg$ defined by group properties. It is interesting in its own right to determine in a given subspace whether there is a comeager isomorphism class or not. More importantly, we think there may be a great potential in using the Baire category method in (a subspace of) $\calg$ to answer purely algebraic questions. For example, given countably many group properties $P_1,P_2,\dots$ (in the ordinary sense) to prove that there exists a group that satisfies every $P_i$ it suffices to show that the corresponding subsets $\calp_i$ are comeager in $\calg$.

It would be interesting to inspect the genericity of some extensively studied properties of finitely generated groups. For example, one may ask whether hyperbolicity is generic among groups generated by the first $n$ natural numbers.

\textbf{Strengthening existential closedness.} In \cite{ROBINSON} Barwise and Robinson introduced the notion of $K$-generic structures (where $K$ stands for a theory). In \cite{ROBINSON} Theorem 3.6 shows that $K$-genericity implies existential closedness. In the case of group theory, $K$-generic groups form a proper subclass of the class of existentially closed groups. See \cite[page 142]{ROBINSON} and \cite[Theorem 7]{MACINTYRE1}. \textbf{Is $K$-genericity generic in the sense of Baire category?}

\textbf{Continuous maps.} We defined the spaces $X_\Gamma$ for any $\forall_2$-theory $\Gamma$ in Section~\ref{s.the_space}. It would be interesting to study continuous maps between these spaces. For example, one may consider the space of all countably infinite group algebras on the universe $\nat$ and the continuous map $G\mapsto\rat[G]$. A well-known open problem of algebra is \emph{the semiprimitivity conjecture}, which says that every group algebra is semiprimitive. \textbf{Is it true that for the generic $G\in\calg$ the group algebra $\rat[G]$ is semiprimitive?}

\textbf{Comeager isomorphism classes.} As we have noted in Remark~\ref{r.general_0-1}, if the class of countably infinite models of a $\forall_2$-theory $\Gamma$ has the JEP, then isomorphism classes are either meager or comeager. A great ambition would be to characterize $\forall_2$-theories $\Gamma$ for which there is a comeager isomorphism class in $X_\Gamma$. A more realistic goal is to find sufficient or necessary conditions.

\end{document}